\documentclass[12pt,reqno]{amsart}

\usepackage{amsmath,amsfonts,amssymb,amsthm,graphicx,fullpage}

\numberwithin{equation}{section}

\newtheorem{theorem}{Theorem}[section]

\newtheorem{lemma}[theorem]{Lemma}
\newtheorem{corollary}[theorem]{Corollary}

\theoremstyle{definition}

\theoremstyle{remark}
\newtheorem{example}[theorem]{Example}
\newtheorem{remark}[theorem]{Remark}

\newcommand{\E}{\mathbf E}
\newcommand{\R}{\mathbb R}
\newcommand{\Rd}[1][d]{\R^{#1}}

\renewcommand{\SS}{\mathbb S}
\newcommand{\Sphere}[1][d-1]{\SS^{#1}}

\newcommand{\sA}{\mathcal A}

\newcommand{\sG}{\mathcal G}
\newcommand{\sX}{\mathcal X}

\newcommand{\sH}{\mathcal H}
\newcommand{\sF}{\mathcal F}

\newcommand{\sR}{\mathcal R}

\newcommand{\sI}{\mathcal I}

\newcommand{\one}{\mathbf{1}}

\newcommand{\Lp}[1][p]{\mathsf{L}^{#1}}

\newcommand{\Hx}[1][x]{H_{u_{#1}}}

\DeclareMathOperator{\conv}{conv}

\DeclareMathOperator{\cl}{cl}

\DeclareMathOperator{\proj}{proj}

\renewcommand{\epsilon}{\varepsilon}
\renewcommand{\phi}{\varphi}
\newcommand{\eps}{\varepsilon}

\newcommand{\emp}[1][n]{\widehat{\sR}_{#1}}
\newcommand{\emm}[1][n]{\widehat{\mu}_{#1}}
\newcommand{\empm}[1][n]{\hat{\theta}_{#1}}
\newcommand{\grad}{\operatorname{grad}}

\usepackage{fancybox}
\setlength{\fboxsep}{1.5ex}
\newlength{\querylen}
\setlength{\querylen}{\textwidth}
\addtolength{\querylen}{-2\fboxsep}

\begin{document} 

\title{Strong limit
  theorems for empirical halfspace depth trimmed regions}

\author{Andrii Ilienko, Ilya Molchanov, Riccardo Turin}


  
    
    

  \begin{abstract}
    We study empirical variants of the halfspace (Tukey) depth of a
    probability measure $\mu$, which are obtained by replacing $\mu$
    with the corresponding weighted empirical measure. We prove
    analogues of the Marcinkiewicz--Zygmund strong law of large
    numbers and of the law of the iterated logarithm in terms of set
    inclusions and for the Hausdorff distance between the theoretical
    and empirical variants of depth trimmed regions. In the special
    case of $\mu$ being the uniform distribution on a convex body $K$,
    the depth trimmed regions are convex floating bodies of $K$, and
    we obtain strong limit theorems for their empirical estimators.
  \end{abstract}
	
	

  \maketitle

  \renewcommand{\thefootnote}{}
  
  \phantom{1}\footnote{
  AI, IM: Institute of Mathematical Statistics and Actuarial
      Science, University of Bern, Alpeneggstrasse 22, CH-3012 Bern,
      Switzerland
      
  AI: National Technical University of Ukraine \lq\lq Igor
      Sikorsky Kyiv Polytechnic Institute\rq\rq, Beresteiskyi
      prosp.~37, 03056, Kyiv, Ukraine
      
  RT: Swiss Re Management
      Ltd, Mythenquai 50/60, 8022 Zurich, Switzerland}

\section{Introduction}
\label{sec:introduction}

The \emph{halfspace depth} of a point $x\in\Rd$ with respect to a
Borel probability measure $\mu$ on $\R^d$, also called the \emph{depth
  function}, is defined as
\begin{equation}
  \label{eq:depth}
  D(x)=\inf\{\mu(H)\colon H\in\sH, H\ni x\}, \quad x\in\R^d,  
\end{equation}
where $\sH$ is the family of all closed halfspaces in $\R^d$. A
generic halfspace is denoted by $H$ and its boundary by $\partial H$,
which is $(d-1)$-dimensional affine subspace. In other words, the
depth of $x$ is the smallest measure of a halfspace containing $x$.
Clearly, the infimum in \eqref{eq:depth} can be equivalently taken
over all $H$ with $x\in\partial H$.  This concept was first introduced
by Tukey \cite{tuk75} and is also called the \emph{Tukey depth}, see
\cite{nag:sch:wer18} for a survey of recent results and geometric
connections.  For $\alpha\in(0,1]$, define the \emph{depth trimmed
  region}
\begin{displaymath}
  \sR(\alpha)=\{x\in\R^d\colon D(x)\geq \alpha\},
\end{displaymath}
that is, $\sR(\alpha)$ is the upper level set of the depth function.
The regions $\sR(\alpha)$ are nested (i.e., decreasing with respect to
the set inclusion) compact convex subsets of $\R^d$, see Corollary
after Proposition~1 in \cite{rous:rut99} and Proposition~5
therein. Note that $\sR(\alpha)$ converges to the convex hull of the
support of $\mu$ as $\alpha\downarrow0$.

Obviously, $\sR(\alpha)$ is either empty or a singleton when
$\alpha>1/2$. Moreover, for any $\alpha$,
\begin{equation}
  \label{eq:pop}
  \sR(\alpha)=\bigcap_{H\in\sH\colon \mu(H)>1-\alpha} H,
\end{equation}
see Proposition~6 and Corollary thereafter in \cite{rous:rut99}.

An empirical version $\emp(\alpha)$ of $\sR(\alpha)$ is constructed by
replacing $\mu$ with the \emph{weighted empirical measure}
\begin{equation}
  \label{eq:emm}
  \emm=n^{-1}\sum_{i=1}^{n}\xi_i\,\delta_{X_i}\,,
\end{equation}
where $\{X_i,i\ge1\}$ is a sequence of $\mu$-distributed independent
random vectors, $\{\xi_i,i\ge1\}$ is a sequence of i.i.d.~random
variables with $\E\xi_1=1$ independent of $\{X_i,i\ge1\}$, and
$\delta_x$ denotes the Dirac measure at $x\in\Rd$. In particular,
$\emm(\Rd)=n^{-1}\sum_{i=1}^{n}\xi_i=\bar\xi_n$, the sample mean of
$\xi_i$.  If the weights $\xi_i$ are chosen to be identically one,
$\emm$ is the \emph{empirical measure}, see, e.g., \cite{shor:wel86}
or \cite{VaartWellner}. In general, $\emm$ might be also signed. The
above construction of the depth function and the depth trimmed regions
applied to the empirical measure $\emm$ yields the empirical depth and
the \emph{empirical depth trimmed regions} $\emp(\alpha)$. It can be
easily shown that \eqref{eq:pop} remains true for finite signed
measures and is applicable to $\emp(\alpha)$ and $\emm$ with
$1-\alpha$ replaced by $\bar\xi_n-\alpha$.

Working with the weighted variant of empirical measures makes it
possible to cover the case of possibly negative weights, which may
destroy the monotonicity property of $\emp(\alpha)$ in
$\alpha$. Particularly complicated effects arise when the expected
weight approaches zero, which we leave out of the scope of the current
work. Furthermore, weighted empirical measures arise from the
resampling scheme and it is likely that they would lead to bootstrap
estimators of the depth-trimmed regions.

Our results on convergence of empirical depth trimmed regions are
split into two groups. The nearness of $\sR(\alpha)$ and
$\emp(\alpha)$ can be assessed in the inclusion sense by sandwiching
the empirical depth trimmed region between two versions
$\sR(\alpha'_n)$ and $\sR(\alpha''_n)$ with parameters $\alpha'_n$ and
$\alpha''_n$ which converge to $\alpha$ from below and above as
$n\to\infty$. In this case, we study the asymptotics of differences
between $\alpha'_n$, $\alpha''_n$ and $\alpha$. Another way is to
consider the asymptotical behaviour of the Hausdorff distance
$\rho_H(\sR(\alpha),\emp(\alpha))$. Strong limit theorems in the
metric sense typically require additional assumptions on $\mu$, which
are of smoothness nature.

In the special case when the measure $\mu$ is uniform on a convex body
$K\subset\Rd$, the set $\sR(\alpha)$ is the convex floating body of
$K$ at level $\alpha V_d(K)$, where $V_d(\cdot)$ denotes the Lebesgue
volume, see \cite{nag:sch:wer18} and \cite{schut:wer90}.  Then
$\emp(\alpha)$ becomes an empirical estimator of the convex floating
body. The Hausdorff distance between $\emp(\alpha)$ and $\sR(\alpha)$
in this setting has been studied in \cite{bru19}, where concentration
type results are obtained, proving an upper exponential bound on the
tail of the Hausdorff distance. The Hausdorff and Banach--Mazur
distances between the convex hull of $n$ points and the floating body
at level $1/n$ was studied in \cite{fres13}. Complementing these
works, we show that empirical floating bodies approximate the
theoretical one at the rate $(n^{-1}\log\log n)^{1/2}$, so that the
rate $n^{-1/2}$ for the convergence in probability from
\cite[Corollary~2]{bru19} does not hold almost surely. It is
interesting that this rate does not depend on the dimension, while the
rate of approximation of a $C^2$-smooth convex body $K$ by the convex
hull of a uniform random sample in $K$ is $(n^{-1}\log n)^{2/(d+1)}$
and so depends on dimension, see \cite[Theorem~6]{bar:89}.

From the central limit theorem for depth proved in \cite{masse04},
applying the inversion tools from \cite{mol98}, one arrives at a
central limit theorem for the Hausdorff distance between
$\emp(\alpha)$ and $\sR(\alpha)$. We leave these results and an
eventual functional limit theorem out of the scope of the current
work.

It should be noted that related problems can be considered also for
unbounded measures. For example, for the Lebesgue measure on $\R_+^d$
and a variant of empirical depth trimmed regions derived from multiple
sums on the integer lattice, the strong law of large numbers and the law of the
iterated logarithm were obtained in \cite{il-mol:18}.

The paper is organised as follows.
Section~\ref{sec:strong-limit-theor} reminds basic properties of the
depth and standard assumptions on the underlying probability
measure. It also contains some necessary representations of the
theoretical and empirical depth trimmed regions.
In Section \ref{sec:SLLN}, we prove almost sure convergence of the
empirical halfspace depth trimmed regions towards the theoretical ones
and establish the rate of this convergence in terms of the
Marcinkiewicz--Zygmund law.  Section~\ref{sec:LIL} contains the law of
the iterated logarithm. Both these sections present results in the
inclusion setting. Among other methods, the proofs extensively use
techniques from the theory of empirical measures.

Section~\ref{sec:diff-depth-pert} presents a deterministic result
concerning asymptotics of the Hausdorff distance between two
depth trimmed regions at close levels. Since depth trimmed regions
appear as solutions of inequalities, we can take advantage of the
approach used in \cite{mol98} to handle this task. Assuming
differentiability of the depth function, we determine its gradient in
terms of the Radon transform of the underlying measure. 
In Section~\ref{sec:strong-limit-theor-Hausdorff}, we prove the strong
law of large numbers and the law of the iterated logarithm for the
Hausdorff distance between $\emp(\alpha)$ and $\sR(\alpha)$.  In
Section~\ref{sec:ex}, we present several examples. 

Appendix contains some geometric results which make it possible to
show that the depth function generated by the uniform distribution on
a symmetric smooth strictly convex body is differentiable.

\section{Basic properties of depth trimmed regions}
\label{sec:strong-limit-theor}

We start by listing some conditions on the measure $\mu$ needed in
what follows. The first two are quite standard.
\begin{enumerate}
\item[(C1)] $\mu(E)=0$ for each hyperplane $E$.
\item[(C2)] If $H_1,H_2\in\sH$ are disjoint with
  $\mu(H_1),\mu(H_2)>0$, then $\mu(H_1)+\mu(H_2)<1$.
\end{enumerate}

Condition (C1) holds if $\mu$ is absolutely continuous with
respect to the $d$-dimensional Lebesgue measure.  Under (C1), the
infimum in \eqref{eq:depth} is attained at some (possibly not unique)
\emph{minimal halfspace} $H$, and the depth function $D$ is continuous
in $x$, see Proposition~4.5 in \cite{masse04} or Proposition~1 in \cite{miz02}.

Condition (C2) is usually called the contiguity of support. This means
that the support of $\mu$ cannot be divided into two disjoint parts
separated by a strip. Clearly, (C2) holds for $\mu$ with connected
support. Under (C1) and (C2), the map $\alpha\mapsto\sR(\alpha)$ is
continuous in the Hausdorff metric $\rho_H$ on
$\alpha\in(0,\alpha_{\max})$, where
\begin{math}
  \alpha_{\max}=\sup\{D(x)\colon x\in\Rd\},
\end{math}
see Theorem~1 in \cite{nag:sch:wer18}. Under some additional
conditions, the differentiability of the function
$\rho_H(\sR(\alpha),\sR(\alpha+t))$ in $t$ is proved in
Section~\ref{sec:diff-depth-pert}.

In the following lemma, we give two further useful consequences of
(C1) and (C2). For a set $K$ in $\R^d$ and $E=\partial H$ for a
halfspace $H$, we say that $E$ is a \emph{supporting hyperplane} of
$K$ at $x\in\partial K$ if $x\in E$ and $K$ is contained in one of the
two closed halfspaces bounded by $E$.

\begin{lemma}
  \label{lem:level}
  Under (C1), (C2), and for each $\alpha\in(0,\alpha_{\max})$,
  \begin{enumerate}
  \item[(i)] $\partial\sR(\alpha)=\{x\colon D(x)=\alpha\}$, which is the
    $\alpha$-level surface of the depth function $D$;
  \item[(ii)] the boundary of each minimal halfspace $H$ at
    $x\in\partial\sR(\alpha)$ is a supporting hyperplane
    of~$\sR(\alpha)$.
  \end{enumerate}
\end{lemma}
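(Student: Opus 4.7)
The plan is to first establish the stronger claim that whenever $D(x) = \alpha$ and $H$ is a minimal halfspace at $x$, the entire region $\sR(\alpha)$ lies in the closed opposite halfspace $H^*$; both (i) and (ii) will then follow as short corollaries. The easy half of (i) is already free: under (C1) the depth function $D$ is continuous, so $\{x\colon D(x) > \alpha\}$ is open, contained in $\sR(\alpha)$, and therefore contained in $\Int \sR(\alpha)$, giving $\partial \sR(\alpha) \subseteq \{x\colon D(x) = \alpha\}$.

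For the main claim, I would fix $x$ with $D(x) = \alpha \in (0, \alpha_{\max})$ and use (C1) to extract a halfspace $H \in \sH$ with $x \in \partial H$ and $\mu(H) = \alpha$; writing $v$ for its outer unit normal and $H^* = \{z \colon \langle z - x, v\rangle \geq 0\}$, I would argue by contradiction: suppose some $y \in \sR(\alpha)$ satisfies $\langle y - x, v\rangle < 0$. The parallel halfspace $H_y = \{z \colon \langle z - y, v\rangle \leq 0\}$ is a proper subset of $H$ with $y \in \partial H_y$, whence $D(y) \leq \mu(H_y) \leq \mu(H) = \alpha$. If $\mu(H_y) < \alpha$, this already contradicts $D(y) \geq \alpha$. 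Otherwise $\mu(H_y) = \alpha$, so the slab $H \setminus H_y$ is $\mu$-null; using (C1) to obtain $\mu(H^*) = 1 - \alpha$, the disjoint halfspaces $H_y$ and $H^*$ then satisfy $\mu(H_y) + \mu(H^*) = 1$ with both summands strictly positive for $\alpha \in (0, 1)$, contradicting (C2).

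Given the claim, (ii) is immediate, since $\partial H$ bounds $H^*$, which contains $\sR(\alpha)$ and passes through $x \in \sR(\alpha)$. For the remaining inclusion $\{D = \alpha\} \subseteq \partial \sR(\alpha)$ in (i), any such $x$ lies in $\sR(\alpha) \cap \partial H$ and $\partial H$ supports the convex set $\sR(\alpha)$, so $x \notin \Int \sR(\alpha)$. The delicate step in the whole argument is the borderline case $\mu(H_y) = \mu(H)$: there, the depth comparison alone is inconclusive, and one must rule out the geometric pathology that $\mu$ places no mass in the thin strip between $\partial H_y$ and $\partial H$; contiguity (C2) is precisely what forbids this.
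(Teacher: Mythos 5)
Your argument is correct, but it is organised differently from the paper's proof. You establish one separation claim directly from (C1) and (C2) — at any $x$ with $D(x)=\alpha$, a minimal halfspace $H$ through $x$ has $\sR(\alpha)$ entirely on the far side of $\partial H$, the borderline case $\mu(H_y)=\mu(H)$ being killed by the two disjoint halfspaces $H_y$ and $H^*$ of total mass $1$ — and then read off both the inclusion $\{D=\alpha\}\subset\partial\sR(\alpha)$ and part (ii) as corollaries. The paper instead proves the converse inclusion in (i) by invoking the continuity of $\alpha\mapsto\sR(\alpha)$ in the Hausdorff metric (an external result cited from the literature, valid under (C1), (C2) on $(0,\alpha_{\max})$), and proves (ii) by translating $\partial H$ inward until it supports $\sR(\alpha)$ at some contact point $y$, then comparing $\mu(G)\ge D(y)=\alpha$ (which uses (i)) with the strict inequality $\mu(G)<\mu(H)$ furnished by (C2) — the same strip-plus-(C2) mechanism you use, just deployed at the contact point rather than at $x$ itself. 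What your route buys is self-containedness (no appeal to the Hausdorff-continuity theorem) and, incidentally, no genuine use of $\alpha<\alpha_{\max}$ beyond $\alpha<1$; what the paper's route buys is brevity given the cited result and a clean logical order in which (ii) is a consequence of (i). Two small points to make explicit if you write this up: the ``easy half'' also needs $\partial\sR(\alpha)\subset\sR(\alpha)$, i.e.\ closedness of the level set (immediate from continuity of $D$ under (C1)), and the existence of a minimal halfspace with $x\in\partial H$ attaining $\mu(H)=D(x)$ is exactly the attainment statement the paper quotes as a consequence of (C1), which your claim should cite rather than take for granted.
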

\begin{proof}
  If $x\in\partial\sR(\alpha)$ then $x_i^-\to x$ and $x_i^+\to x$ for
  some sequences $\{x_i^-,i\ge1\}$ and $\{x_i^+,i\ge1\}$ with
  $D(x_i^-)<\alpha$ and $D(x_i^+)\ge\alpha$ for all $i$. Hence,
  $D(x)=\alpha$ by continuity of the depth function implied by (C1).
  
  Conversely, if $D(x)=\alpha$ then $x\in\partial\sR(\alpha)$ since, otherwise,
  we would have $x\in\sR(\beta)$ for some $\beta>\alpha$ by
  continuity of $\sR(\cdot)$ in the Hausdorff metric implied by (C1) and
  (C2). This contradicts to $D(x)=\alpha$.
  
  Finally, if $\partial H$ does not support $\sR(\alpha)$ then
  $G\subset H$ for some $G\in\sH$ such that $\partial G$ supports
  $\sR(\alpha)$ at some point $y\in\partial\sR(\alpha)$. Hence
  $\mu(G)\ge D(y)=\alpha$. On the contrary, $G\subset H$ and (C2)
  yield $\mu(G)<\mu(H)=D(x)=\alpha$.
\end{proof}

In \eqref{eq:pop} and in a similar representation of $\emp(\alpha)$,
we deal with uncountable intersections of closed halfspaces.  In case
of $\emp(\alpha)$, the condition $\emm(H)>\bar\xi_n-\alpha$ is a
random event due to measurability of $\emm$. Hence, to guarantee the
measurability of the intersection we should show that $\emp(\alpha)$
are countably determined. This means that we can replace $\sH$ in the
above representation by a countable subclass $\sH_0\subset\sH$, and
still almost surely obtain the same depth trimmed regions.

The family of halfspaces whose boundaries contain the origin
can be parametrised as
\begin{equation*}
  H_u=\{z\in\R^d\colon \langle z,u\rangle \le 0\},\quad u\in\Sphere,
\end{equation*}
where $\Sphere$ stands for the unit sphere in $\Rd$.  Each halfspace
from $\sH$ can be given as $H_u+x$ for some $x\in\R^d$ and
$u\in\Sphere$ or as $H_u(t)=\{z\in\R^d\colon \langle z,u\rangle \le
t\}$ for $u\in\Sphere$ and $t\in\R$.

\begin{lemma}
  \label{th:count}
  Let $\sH_0$ be the family of halfspaces $H_u(t)$ for all rational
  numbers $t$ and all $u$ from a countable dense set on the unit
  sphere.  If $\mu$ is a finite signed measure then
  \begin{equation}
    \label{eq:countable}
    \sR(\alpha)
    =\cap \bigl\{H\in\sH_0\colon \mu(H)>\mu(\Rd)-\alpha\bigr\}.
  \end{equation}
  In particular, $\emp(\alpha)$ is a random closed set.
\end{lemma}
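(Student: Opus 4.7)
My plan is to prove the representation \eqref{eq:countable} by the two set inclusions; the measurability of $\emp(\alpha)$ then follows by applying the representation to $\emm$ and viewing the result as a countable intersection of random closed sets.

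The inclusion $\sR(\alpha) \subset \bigcap_{H \in \sH_0\colon \mu(H) > \mu(\R^d) - \alpha} H$ is immediate from \eqref{eq:pop} (which, as noted in the text, is also valid for finite signed measures), since shrinking the indexing family from $\sH$ to $\sH_0$ enlarges the intersection. The substance of the lemma is the reverse inclusion, which I tackle by contraposition: given $x \notin \sR(\alpha)$, I must produce some $H' = H_{u'}(t') \in \sH_0$ with $\mu(H') > \mu(\R^d) - \alpha$ and $\langle x, u'\rangle > t'$. By \eqref{eq:pop} applied to the full family, there is already some $H = H_u(t) \in \sH$ separating $x$ in this sense, so the task reduces to perturbing $(u, t)$ into $(u', t')$ with $u'$ in the chosen countable dense subset of $\Sphere$ and $t' \in \QQ$, without destroying either property.

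The condition $\langle x, u\rangle > t$ is stable under small perturbations and is preserved by choosing $t' \in (t, \langle x, u\rangle)$ and $u'$ sufficiently close to $u$. The main obstacle is preserving the measure inequality in the presence of possible $\mu$-mass on the boundary hyperplane; my strategy differs slightly in the two cases. For a probability measure $\mu$, I would first observe that $\lim_{s \downarrow t} \mu(\{\langle z, u\rangle \ge s\}) = \mu(\{\langle z, u\rangle > t\}) = 1 - \mu(H_u(t)) < \alpha$, so a rational $t' \in (t, \langle x, u\rangle)$ can be chosen with $\mu(\{\langle z, u\rangle \ge t'\}) < \alpha$, pushing the boundary hyperplane into a region of strict inequality. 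Then for any $u_n \to u$ from the countable dense set, the inclusion $\limsup_n \{\langle z, u_n\rangle > t'\} \subset \{\langle z, u\rangle \ge t'\}$ combined with the reverse Fatou lemma yields $\limsup_n \mu(\{\langle z, u_n\rangle > t'\}) < \alpha$, so $H_{u_n}(t') \in \sH_0$ satisfies the required inequalities for all $u_n$ sufficiently close to $u$. For a signed measure supported on a finite set $\{y_1, \ldots, y_N\}$ the argument is elementary: first pick rational $t' \in (t, \langle x, u\rangle)$ having the same $y_i$'s on each side of $t'$ as of $t$ (so that $\mu(H_u(t')) = \mu(H_u(t))$), then pick $u'$ from the countable dense set close enough to $u$ that the finitely many strict inequalities determining on which side of $t'$ each $y_i$ and $x$ lies are preserved under the replacement of $u$ by $u'$.

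With \eqref{eq:countable} established for finite signed measures, the final claim is routine: writing $\emp(\alpha) = \bigcap_{H \in \sH_0} Z_H$, where $Z_H = H$ on the measurable event $\{\emm(H) > \bar\xi_n - \alpha\}$ and $Z_H = \R^d$ otherwise, exhibits $\emp(\alpha)$ as a countable intersection of random closed sets, and hence as a random closed set.
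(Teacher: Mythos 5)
Your proof is correct, and it follows the same overall skeleton as the paper's — the inclusion $\sR(\alpha)\subset\bigcap_{H\in\sH_0\colon\mu(H)>\mu(\Rd)-\alpha}H$ is immediate, and the substance is replacing a separating halfspace $H=H_u(t)\in\sH$ with $\mu(H)>\mu(\Rd)-\alpha$ and $x\notin H$ by one from $\sH_0$ — but you execute the key approximation step differently in the probability case. The paper truncates to a ball $B$ with $\mu(B^c)<\eps$, where $\eps=\mu(H)-(\mu(\Rd)-\alpha)$, and chooses a rational halfspace $H_0$ excluding $x$ with $H_0\cap B\supset H\cap B$, so the measure loss is at most $\eps$; the perturbations of $t$ and $u$ are handled in one stroke and possible mass on hyperplanes never needs to be discussed. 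You instead first push $t$ up to a rational $t'$ using continuity from above, securing the strictly stronger bound $\mu(\{\langle z,u\rangle\ge t'\})<\alpha$ (which explicitly disposes of any mass on the boundary hyperplane), and then perturb $u$ within the countable dense set via the reverse Fatou lemma. Your route is more measure-theoretic and avoids having to verify the geometric claim, left unstated in the paper, that a halfspace from $\sH_0$ excluding $x$ and containing $H\cap B$ exists; the paper's ball trick is shorter once that fact is granted. For the finitely supported signed measure you supply the detail the paper compresses into one sentence (choose $t'$ below the smallest $\langle y_i,u\rangle$ exceeding $t$, then stabilise the finitely many strict inequalities under a small change of $u$), and your measurability argument — a countable intersection of the two-valued random closed sets $Z_H$ — is exactly the paper's.
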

\begin{proof}
  It suffices to prove that $\sR(\alpha)$ contains the
  right-hand side of \eqref{eq:countable},
  since the opposite inclusion is obvious.  If $x$ does not belong to
  $\sR(\alpha)$, then $x\notin H$ for some $H\in\sH$ such that
  $\mu(H)>\mu(\Rd)-\alpha+\eps$ with $\varepsilon>0$. Let
  $\mu=\mu^+-\mu^-$ be the Jordan decomposition of $\mu$. Consider a
  ball $B\subset\Rd$ such that $\mu^+(B^c)\le\eps/4$ and
  $\mu^-(B^c)\le\eps/4$. It follows from the definition of $\sH_0$
  that there exist $H_n\in\sH_0$, $n\ge1$, such that $x\notin H_n$,
  $H_{n+1}\cap B\subset H_n\cap B$, and
  $\bigcap_{n=1}^\infty(H_n\cap B)=H\cap B$. Then
  \begin{displaymath}
    \lim_{n\to\infty}\mu^+(H_n\cap B)=\mu^+(H\cap B),\qquad
    \lim_{n\to\infty}\mu^-(H_n\cap B)=\mu^-(H\cap B).
  \end{displaymath}
  Hence, for large $n$, 
  \begin{gather*}
    \mu^+(H_n)\ge\mu^+(H_n\cap B)\ge\mu^+(H\cap B)-\eps/4\ge
    \mu^+(H)-\eps/2,\\
    \mu^-(H_n)\le\mu^-(H_n\cap B)+\eps/4\le\mu^-(H\cap B)+\eps/2\le
    \mu^-(H)+\eps/2.
  \end{gather*}
  Thus, for such $n$, $\mu(H_n)\ge\mu(H)-\eps>\mu(\Rd)-\alpha$,
  meaning that $x$ does not belong to the right-hand side of
  \eqref{eq:countable}. This proves the first claim.
	
  Let $H(n,\alpha)$ be $H$ if $\emm(H)>\bar\xi_n-\alpha$ and $\Rd$
  otherwise. By \eqref{eq:countable}, $\emp(\alpha)$ is the
  (countable) intersection of $H(n,\alpha)$ obtained for all
  $H\in\sH_0$.  Since $H(n,\alpha)$ are simple random closed sets in
  the sense of \cite[Definition~1.3.15]{mol17}, the second claim
  follows from Theorem~1.3.25(vii) therein.
\end{proof}

The following result clarifies the structure of empirical depth
trimmed regions. A related result is available in Corollary~13 of
\cite{MR4426418}. 

\begin{lemma}
  \label{lemma:boundary}
  The boundary of the empirical depth trimmed region $\emp(\alpha)$ is
  contained in the union of all affine subspaces generated by
  $i$-tuples of points from $\{X_1,\dots,X_n\}$ with $i=1,\ldots,d$.
\end{lemma}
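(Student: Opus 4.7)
The plan is to show that $\emp(\alpha)$ is a convex polytope each of whose $(d-1)$-dimensional facets lies on a hyperplane affinely spanned by exactly $d$ of the data points $X_1,\dots,X_n$. Since the boundary of such a polytope is the union of its facets, and every proper face is contained in at least one facet, this will yield the stronger containment $\partial\emp(\alpha)\subset\bigcup_{|S|=d}\mathrm{aff}(S)$, from which the lemma follows at once.

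The polytope structure is immediate from Lemma~\ref{th:count}: the analogue of \eqref{eq:pop} for $\emm$ gives $\emp(\alpha)=\bigcap_{u\in\Sphere}H_u(t^*(u))$ with $t^*(u):=\min\{t\in\R:\emm(H_u(t))>\bar\xi_n-\alpha\}$. Since $\emm$ is finitely supported and $t\mapsto\emm(H_u(t))$ is a right-continuous step function, this minimum is attained and $t^*(u)=\langle X_{i(u)},u\rangle$ for some index $i(u)$; hence only finitely many distinct halfspaces actually occur in the intersection. Fix a facet $F$ of $\emp(\alpha)$ of dimension $d-1$, let $u$ be its outer unit normal and $E=\{z:\langle z,u\rangle=t^*(u)\}$ its supporting hyperplane (so that $X_{i_0}:=X_{i(u)}\in E$), and set $A:=\mathrm{aff}(E\cap\{X_1,\dots,X_n\})$.

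The main step is to prove $A=E$, by contradiction. Suppose $\dim A<d-1$. One may then choose a unit vector $w$ in the direction space of $E$ (so that $w\perp u$) which is also orthogonal to the direction of $A$; in particular $\langle X_j,w\rangle=\langle X_{i_0},w\rangle$ for every $X_j\in E\cap\{X_1,\dots,X_n\}$. For $\eps>0$ consider the rotated directions $v_\pm=u\pm\eps w$. If $\eps$ is sufficiently small, every $X_j\notin E$ retains its strict side of the critical level (its positive distance $|\langle X_j,u\rangle-t^*(u)|$ dominates the first-order perturbation), whereas all data points on $E$ remain tied; consequently $t^*(v_\pm)=\langle X_{i_0},v_\pm\rangle$ and $\emp(\alpha)\subset H_{v_\pm}(\langle X_{i_0},v_\pm\rangle)$. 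Evaluating this inclusion at any $z\in F$ and using $\langle z-X_{i_0},u\rangle=0$ gives
\begin{equation*}
 0\ge\langle z-X_{i_0},v_\pm\rangle=\pm\eps\langle z-X_{i_0},w\rangle,
\end{equation*}
forcing $\langle z-X_{i_0},w\rangle=0$. Thus $F\subset X_{i_0}+\{u,w\}^\perp$, an affine subspace of dimension $d-2$, contradicting $\dim F=d-1$.

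Hence $E=\mathrm{aff}(S)$ for some $S\subset\{X_1,\dots,X_n\}$ of size $d$, proving the lemma in the full-dimensional case; the degenerate situation where $\emp(\alpha)$ lies in a proper affine subspace of $\R^d$ is handled analogously, by applying the same argument to a hyperplane that supports $\emp(\alpha)$ from both sides. I expect the main technical delicacy to be the perturbation step: one must leverage the finiteness of the support of $\emm$ together with the specific choice of $w\perp A$ to guarantee that the critical pivot in each direction $v_\pm$ is still a data point of $E$, so that the two-sided inequality above actually holds.
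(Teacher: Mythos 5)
There is a genuine gap, and it sits at the very first step. You assert that the polytope structure of $\emp(\alpha)$ is ``immediate'' because ``only finitely many distinct halfspaces actually occur in the intersection''. That is not what your representation gives: for every direction $u\in\Sphere$ you get a halfspace $H_u(t^*(u))$ whose boundary passes through one of the finitely many sample points, but the halfspaces themselves form a continuum (only the index $i(u)$ is finite-valued). An intersection of a continuum of halfspaces each touching one of finitely many points need not be polyhedral a priori --- e.g.\ the halfspaces through a single point with normals ranging over a spherical cap intersect in a circular cone with curved boundary. Since, for a full-dimensional compact convex set, the lemma is essentially \emph{equivalent} to saying that $\emp(\alpha)$ is a polytope whose facet hyperplanes are spanned by sample points, taking the facet decomposition of $\partial\emp(\alpha)$ as the starting point begs the question: your argument only covers boundary points lying on $(d-1)$-dimensional faces, and without polyhedrality such faces need not exist, let alone cover the boundary. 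A second, related gap is the identification of the facet's supporting hyperplane with $\{z\colon\langle z,u\rangle=t^*(u)\}$, which is what puts $X_{i(u)}$ on it and makes the evaluation $\langle z-X_{i_0},u\rangle=0$ possible. The paper allows signed weights ($\E\xi_1=1$ only, and $\emm$ may be a signed measure), so $t\mapsto\emm(H_u(t))$ is a non-monotone step function and $t^*(\cdot)$ can be discontinuous in $u$; a $(d-1)$-dimensional face with outer normal $u$ can then sit strictly below the level $t^*(u)$ (it arises as a limit of constraints with normals $v_k\to u$ through some other sample point), in which case your two-sided inequality yields no contradiction. Even in the unweighted case this tightness needs an argument rather than an assertion.

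By contrast, the paper's proof never needs polyhedrality or any tightness statement: it groups the admissible halfspaces according to their trace $I=H\cap\{X_1,\dots,X_n\}$, writes $\emp(\alpha)$ as the intersection over such $I$ of the sets $\bigcap\{H\colon H\supset I,\ H\cap(\{X_1,\dots,X_n\}\setminus I)=\emptyset\}$, and proves a separate finite-geometry lemma showing that the boundary of each such set is covered by affine hulls of at most $d$ sample points. Your perturbation idea for facets is attractive and is salvageable under nonnegative weights once you (a) prove that $\emp(\alpha)$ is a polyhedron and (b) prove that each facet's hyperplane is at the critical level for its normal, but as written both of these are assumed rather than established.
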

\begin{proof}
  Let $\sI$ be the family of all subsets $I\subset \Xi_n=\{X_1,\dots,X_n\}$
  such that $I$ equals the intersection of its convex hull $\conv I$
  with $\Xi_n$ and $n^{-1}\sum_{X_i\in I} \xi_i>\bar\xi_n-\alpha$.

  If $I\in\sI$ and $I\subset H$ for a halfspace $H$ and
  $H\cap(\Xi_n\setminus I)=\emptyset$, then $\emm(H)>\bar\xi_n-\alpha$.  If
  $\emm(H)>\bar\xi_n-\alpha$ for a halfspace $H$, then $H\supset I$ for
  $I=H\cap\Xi_n\in \sI$ and $H$ does not intersect $\Xi_n\setminus
  I$. Thus,
  \begin{align*}
    \emp(\alpha)&=\cap\{H\colon\emm(H)>\bar\xi_n-\alpha\}
    =\cap\{H\colon H\supset I, H\cap (\Xi_n\setminus I)=\emptyset
    \;\text{for}\; I\in\sI\}\\
    &=\bigcap_{I\in\sI} \Big(\cap\{H\colon H\supset I, H\cap (\Xi_n\setminus
    I)=\emptyset\}\Big).
  \end{align*}
  Hence, the boundary of $\emp(\alpha)$ is a subset of the union of
  boundaries of the sets
  \[\cap\{H\colon H\supset I, H\cap (\Xi_n\setminus I)=\emptyset\}\]
  for $I\in\sI$, not to say for all subsets $I$ of $\Xi_n$. The
  statement now follows from the following lemma. 
\end{proof}

\begin{lemma}
  If $I_1$ and $I_2$ are finite sets in $\R^d$ then the intersection
  $A$ of all halfspaces which contain $I_1$ and do not intersect $I_2$
  is either $\Rd$ or has the boundary which is a subset of the union
  of all affine subspaces generated by $i$-tuples of points from
  $I_1\cup I_2$ with $i=1,\ldots,d$.
\end{lemma}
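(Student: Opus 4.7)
The plan is as follows. If there is no closed halfspace containing $I_1$ and disjoint from $I_2$, then $A=\R^d$ by the convention on the empty intersection, and there is nothing to prove. Assume henceforth such halfspaces exist. The strategy is to establish the following separation claim: for every $x\notin A$ one can produce $d$ affinely independent points $w_1,\dots,w_d\in I_1\cup I_2$ and a closed halfspace $H$ with $\partial H=\operatorname{aff}(w_1,\dots,w_d)$, $I_1\subseteq H$, $I_2\cap\Int H=\emptyset$, and $x\notin H$.

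Granted the claim, the lemma follows by a limiting argument. Any $x\in\partial A$ is a limit of some sequence $x_n\notin A$, with associated halfspaces $H_n$ as in the claim. Since $I_1\cup I_2$ is finite there are only finitely many bounding hyperplanes of the form $\operatorname{aff}(J)$ for $J\subseteq I_1\cup I_2$ of size $d$, and two halfspaces for each, so along a subsequence $H_n$ is constant equal to some $H$. Then $x_n\notin H$ together with $x_n\to x$ forces $x\in\R^d\setminus\Int H$, while $x\in A\subseteq H$ (the inclusion $A\subseteq H$ follows because $H$ is a limit of strictly admissible halfspaces), so $x\in\partial H=\operatorname{aff}(J)$ lies in an affine subspace generated by $d$ points of $I_1\cup I_2$.

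To construct the halfspace in the claim for a given $x\notin A$, I would pick any strictly admissible $H_u(t)=\{y:\langle y,u\rangle\le t\}$ with $x\notin H_u(t)$, shrink $t$ to $\max_{y\in I_1}\langle y,u\rangle$ so that some $y_1\in I_1$ lies on $\partial H$, and then pivot in the simplex-method sense: continuously deform $u$ (adjusting $t$ so that the currently active points remain on $\partial H$) within the polyhedral cone of directions preserving $I_1\subseteq H$, $\Int H\cap I_2=\emptyset$, and $x\notin H$, until a new point of $I_1\cup I_2$ (possibly from $I_2$, which is allowed under the relaxed admissibility) reaches $\partial H$. Each pivot increases the affine rank of the active set by one, so after at most $d-1$ pivots this rank reaches $d$.

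The main obstacle will be making the pivot precise: verifying the $(d-k)$-dimensional freedom of the deformation cone when $k$ affinely independent points are currently active, ensuring that a new active point arrives in finite pivot time before $x$ itself touches $\partial H$, and confirming by an approximation argument that $A\subseteq H$ also for halfspaces in the relaxed family where $I_2$ is permitted on $\partial H$. All three are routine parametric linear-algebra arguments exploiting the finiteness of $I_1\cup I_2$, analogous to those used to prove that a feasible linear program attains its optimum at a vertex.
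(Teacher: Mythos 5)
Your reduction to the separation claim is where the argument breaks: the claim is false as stated. Take $d=2$, $I_1=\{(0,0)\}$, $I_2=\{(1,0)\}$, $x=(2,0)$. Here $A=\{(t,0)\colon t\le 0\}$, so $x\notin A$; but the only hyperplane of the required form, namely the affine hull of $d=2$ affinely independent points of $I_1\cup I_2$, is the horizontal axis, and both closed halfplanes it bounds contain $x$ (on their boundary), so no halfspace as in your claim exists. This is exactly the obstacle you labelled routine: pivoting the line about the origin brings $(1,0)$ and $x$ to the boundary at the same instant, so the pivot can never acquire a second active point while keeping $x$ strictly outside $H$. The situation is worse when $I_1\cup I_2$ contains fewer than $d$ affinely independent points (e.g.\ $d=3$, $I_1=\{0\}$, $I_2=\{e_1\}$): then no hyperplane spanned by data points exists at all, the claim is unsatisfiable for every $x\notin A$, and your limiting argument has no halfspaces $H_n$ to extract a constant subsequence from --- yet $\partial A$ is a nonempty ray which the lemma must (and does) cover, via the line through the two points. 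The lemma only asserts that boundary points lie in affine hulls of at most $d$ data points, which may have dimension much smaller than $d-1$; requiring the separating hyperplane itself to be spanned by $d$ data points is strictly stronger, and degenerate configurations defeat it.

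What these cases require is that the boundary point lie in the possibly low-dimensional affine hull of the active points, and the rank-$d$ pivoting scheme cannot deliver that; repairing it essentially forces you into a different argument, which is what the paper does. There, by the separation theorem, $a\notin A$ if and only if $\conv I_1\cap\conv(\{a\}\cup I_2)=\emptyset$, so $a\in\partial A$ means the two polytopes touch; taking a common point of the two touching faces, writing it as a convex combination in both polytopes and solving for $a$ expresses $a$ as an affine combination of at most $d$ points of $I_1\cup I_2$ lying in a common affine subspace of dimension at most $d-1$. This never requires the separating hyperplane to be spanned by data points, so the degenerate cases are handled automatically. Incidentally, your subsidiary step that $A\subseteq H$ for relaxed halfspaces (with $I_2$ allowed on $\partial H$) is correct whenever $\conv I_1\cap\conv I_2=\emptyset$, so the gap is not there; it is the separation claim itself, and the simultaneous-arrival issue you deferred is a genuine counterexample rather than a technicality.
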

\begin{proof}
  If $\conv I_1\hspace{.5pt}\cap\hspace{.5pt}\conv I_2\ne\emptyset$,
  there are no halfspaces which contain $I_1$ and do not intersect
  $I_2$. In this case $A=\Rd$.
	
  Assume that the convex hulls of $I_1$ and $I_2$ are disjoint. Then
  $a\notin A$ if and only if there exists an $H\in\sH$ containing
  $\conv I_1$ which intersects neither $\conv I_2$ nor $a$, that is,
  does not intersect $\conv(\{a\}\cup I_2)$. By the separation
  theorem, this is possible if and only if $\conv I_1$ and
  $\conv(\{a\}\cup I_2)$ are disjoint.  Hence, $a\in\partial A$ means
  that $\conv I_1$ touches $\conv(\{a\}\cup I_2)$, and thus there is a
  face $F_1$ of $\conv I_1$ and a face $F_2$ of $\conv(\{a\}\cup I_2)$
  of dimensions $d_1,d_2\le d-1$, respectively, such that $F_1$ and
  $F_2$ lie on an affine subspace $E$ of dimension at most $d-1$ and
  $F_1\cap F_2\ne\emptyset$. Moreover, it follows from the assumption
  $\conv I_1\hspace{.5pt}\cap\hspace{.5pt}\conv I_2=\emptyset$ that
  $F_2$ has $a$ as one of its vertices.
  
  Let $x\in F_1\cap F_2$. Then
  \begin{displaymath}
    \sum_{k=1}^{d_1} s_kx_k=x=t_0a+\sum_{k=1}^{d_2} t_ky_k,
  \end{displaymath}
  where $x_1,\ldots,x_{d_1}$ are vertices of $F_1$ and thus belong to
  $I_1$, $y_1,\ldots,y_{d_2}$ are vertices of $F_2$ other than $a$ and
  thus belong to $I_2$, $s_1,\ldots,s_{d_1}$ and $t_0,\ldots,t_{d_2}$
  are positive numbers which sum up to one in each group. Hence,
  \begin{displaymath}
    a=\sum_{k=1}^{d_1}\frac{s_k}{t_0}x_k+\sum_{k=1}^{d_2}
    \Bigl(-\frac{t_k}{t_0}\Bigr)y_k.
  \end{displaymath}
  Since $x_1,\ldots,x_{d_1},y_1,\ldots,y_{d_2}$ belong to $E$ and the
  dimension of $E$ is at most $d-1$, it is possible to extract from
  them at most $d$ points which also generate $E$ and yield $a\in E$ as
  their affine combination. 
\end{proof}

\section{Strong laws of large numbers}
\label{sec:SLLN}

The following result may be regarded as the Marcinkiewicz--Zygmund
strong law of large numbers (see \cite[Theorem~6.3.2]{MR2977961}) for
$\emp(\alpha)$ in the inclusion sense.

\begin{theorem}
  \label{th:MZSLLN}
  For $p\in[1,2)$, let $\E|\xi_1|^p<\infty$.
  Then, for any $\eps>0$, 
  \begin{displaymath}
    \sR\Big(\alpha +n^\frac{1-p}{p}\eps\Big)
    \subset
    \emp(\alpha)
    \subset
    \sR\Big(\alpha-n^\frac{1-p}{p}\eps\Big), \quad \alpha\in(0,1],
  \end{displaymath}
  for all $n\geq n_0$ with a finite (possibly, random) $n_0$.
\end{theorem}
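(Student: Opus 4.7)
The plan is to reduce both inclusions to a single almost-sure rate for the uniform deviation
\[
\Delta_n := \sup_{H\in\sH}\bigl|\emm(H)-\mu(H)\bigr|,
\]
which is measurable by the countable determinedness from Lemma~\ref{th:count}. Combining \eqref{eq:pop} with its counterpart for $\emp(\alpha)$ (in which $1-\alpha$ is replaced by $\bar\xi_n-\alpha$, as noted right after \eqref{eq:pop}) turns each set inclusion into an implication among halfspace inequalities. Concretely, if $x\notin\emp(\alpha)$ then some halfspace $H\not\ni x$ has $\emm(H)>\bar\xi_n-\alpha$, and so
\[
\mu(H)\ge\emm(H)-\Delta_n>\bar\xi_n-\alpha-\Delta_n\ge 1-\alpha-\bigl(\Delta_n+|\bar\xi_n-1|\bigr),
\]
whence $x\notin\sR(\alpha+\Delta_n+|\bar\xi_n-1|)$. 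Swapping the roles of $\emm$ and $\mu$ shows symmetrically that $x\notin\sR(\alpha-\Delta_n-|\bar\xi_n-1|)$ implies $x\notin\emp(\alpha)$. Both inclusions of the theorem therefore follow once we show that $\Delta_n+|\bar\xi_n-1|<\eps\,n^{(1-p)/p}$ for all sufficiently large $n$, almost surely.

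The deviation $|\bar\xi_n-1|$ is handled by the classical Marcinkiewicz--Zygmund strong law applied to the i.i.d.\ centred sequence $\{\xi_i-1\}$: the hypothesis $\E|\xi_1|^p<\infty$ with $p\in[1,2)$ yields $|\bar\xi_n-1|=o(n^{(1-p)/p})$ a.s.

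The main obstacle is the uniform bound $\Delta_n=o(n^{(1-p)/p})$ a.s. By independence of $\{\xi_i\}$ and $\{X_i\}$ one has $\E[\xi_i\one_H(X_i)]=\mu(H)$, so $\Delta_n$ is the supremum of the centred weighted empirical process over the class $\{(x,\xi)\mapsto\xi\one_H(x):H\in\sH\}$, which admits the envelope $|\xi|$ with $\E|\xi_1|^p<\infty$ and is indexed by halfspaces, a VC class of dimension $d+1$. I would invoke a uniform Marcinkiewicz--Zygmund strong law for VC subgraph classes with $p$-th moment envelope, in the spirit of Gin\'e--Zinn; if a ready reference for the weighted case is not convenient, one can symmetrise and apply a contraction step to pull out the weights, then split $\xi_i$ via truncation at $|\xi_i|\le n^{1/p}$ so that the bounded part falls under a standard VC uniform strong law while the tail is absorbed by the classical Marcinkiewicz--Zygmund estimate. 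Combined with the bound on $|\bar\xi_n-1|$ above, this yields the required inequality and, via the reduction of the first paragraph, completes the proof.
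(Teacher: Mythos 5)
Your reduction is exactly the one the paper uses: both inclusions are obtained by sandwiching, via the uniform deviation of the weighted empirical measure over halfspaces together with the classical Marcinkiewicz--Zygmund law for $\bar\xi_n$, and this part of your argument is correct (modulo the small point that the supremum should be taken over the countable subclass $\sH_0$ of Lemma~\ref{th:count}, as the paper does for its $D_n$; invoking that lemma for measurability of a supremum over all of $\sH$ is not quite what it says, though an easy approximation argument fixes this). The entire content of the theorem therefore sits in the uniform rate $\sup_H|\emm(H)-\mu(H)|=o(n^{(1-p)/p})$ a.s., and this is where you diverge from the paper: Lemma~\ref{lem:MZSLLN} proves it by building explicit brackets for the class $f_H(t,x)=t\one_H(x)$ from a bracketing cover of the halfspaces, exploiting the product structure $\theta=\nu\otimes\mu$, and then applying the bracketing Glivenko--Cantelli theorem (case $p=1$) and the Massart--Rio bracketing Marcinkiewicz--Zygmund theorem (case $p\in(1,2)$). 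You instead appeal to the VC-subgraph property and an unnamed Gin\'e--Zinn-type uniform Marcinkiewicz--Zygmund law with $\Lp$ envelope. If you can cite such a theorem with precisely your hypotheses (VC subgraph class, envelope $|\xi|$ with $\E|\xi_1|^p<\infty$, plus whatever measurability it requires), the route is legitimate and genuinely different in technology (uniform entropy versus bracketing); as written, however, the key estimate is asserted rather than proved.

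The fallback sketch does not work as described, and this is a genuine gap. Truncating the weights at level $n^{1/p}$ and using contraction to pull them out of the Rademacher average costs a factor $\max_i|\xi_i'|\le n^{1/p}$, which after multiplying by $n^{(p-1)/p}n^{-1}$ leaves you with $\sup_H|\sum_i\epsilon_i\one_H(X_i)|$, of order $\sqrt{n}$; even the sharper route that keeps the weights inside and uses the VC entropy bound with the variance estimate $\E[\xi_1^2\one\{|\xi_1|\le n^{1/p}\}]\le n^{(2-p)/p}\E|\xi_1|^p$ only yields a bound of order $\sqrt{\log n}$ for $n^{(p-1)/p}\Delta_n$, which does not vanish. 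Proofs of uniform Marcinkiewicz--Zygmund laws of this kind truncate at level $i^{1/p}$, work along dyadic blocks, and control the blocks with Hoffmann--J{\o}rgensen or Talagrand-type maximal inequalities; none of that is routine enough to leave implicit. So either supply the precise VC-based reference, or follow the paper's bracketing argument, where the entropy integral condition comes essentially for free from the halfspace bracketing bound.
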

	
We preface the proof of Theorem~\ref{th:MZSLLN} with the following
lemma. Define
\begin{equation*}
  D_n=\sup\big\{\bigl|\emm(H)-\mu(H)\bigr|\colon H\in\sH_0\big\},\quad  n\ge 1,
\end{equation*}
where $\sH_0$ is defined in Lemma~\ref{th:count}.  
	
\begin{lemma}
  \label{lem:MZSLLN}
  Under conditions of Theorem \ref{th:MZSLLN},  
  \begin{equation}
    \label{eq:MZSLLN}
    n^{\frac{p-1}p}D_n\to 0\quad \text{ a.s. as }\; n\to\infty.
  \end{equation}
\end{lemma}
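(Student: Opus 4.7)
The plan is to split the weighted empirical process into a standard (unweighted) part and a weighted remainder, and then to handle each via truncation combined with uniform bounds over the VC class of halfspaces. Writing $\xi_i = 1 + \eta_i$ with $\E\eta_i = 0$ and $\E|\eta_i|^p < \infty$, I would decompose
\[
\emm(H) - \mu(H) = \Bigl(\frac{1}{n}\sum_{i=1}^n\one_H(X_i) - \mu(H)\Bigr) + \frac{1}{n}\sum_{i=1}^n\eta_i\one_H(X_i) =: A_n(H) + B_n(H).
\]
The family $\{\one_H : H\in\sH_0\}$ has VC dimension $d+1$, so Alexander's law of the iterated logarithm for VC classes gives $\sup_H|A_n(H)| = O(\sqrt{n^{-1}\log\log n})$ almost surely, which is $o(n^{-(p-1)/p})$ since $(p-1)/p < 1/2$ for $p<2$; hence only $B_n$ requires work.

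For $B_n$ I would follow the classical Marcinkiewicz--Zygmund recipe in uniform form. Truncate $\eta_i' = \eta_i\one_{|\eta_i|\le i^{1/p}}$ with tail $\eta_i'' = \eta_i - \eta_i'$. The bound $\sum_{i}\P(|\eta_1|>i^{1/p}) \le \E|\eta_1|^p < \infty$ and Borel--Cantelli force $\eta_i'' = 0$ for all but finitely many $i$ almost surely, so the tail term contributes only $O(n^{-1})$ uniformly in $H$. Setting $\delta_i = \E\eta_i' = -\E\eta_1\one_{|\eta_1|>i^{1/p}}$, a dominated-convergence argument using $t^p\P(|\eta_1|>t)\to 0$ (which follows from $\E|\eta_1|^p<\infty$) gives
\[
\sum_{i=1}^n|\delta_i| \le \E\bigl[|\eta_1|\min(n,\lfloor|\eta_1|^p\rfloor)\bigr] = o(n^{1/p}),
\]
so the centering shift $n^{-1}\mu(H)\sum_{i=1}^n\delta_i$ is $o(n^{-(p-1)/p})$ uniformly in $H$.

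It then remains to show $n^{-1/p}\sup_{H\in\sH_0}|W_n(H)|\to 0$ almost surely for
\[
W_n(H) = \sum_{i=1}^n\bigl[\eta_i'\one_H(X_i) - \delta_i\mu(H)\bigr],
\]
whose summands are independent, zero-mean, with $|W_i|\le 2i^{1/p}$, and whose variance is controlled by $\E(\eta_i')^2 = o(i^{(2-p)/p})$ (again by DCT), yielding $\sum_{i\le n}\E W_i(H)^2 = o(n^{2/p})$ uniformly in $H$. For a \emph{single} $H$ this is the classical scalar MZ SLLN, a consequence of Kolmogorov's convergence criterion $\sum_i \E W_i(H)^2/i^{2/p} < \infty$ and Kronecker's lemma. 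To make the argument uniform I would exploit the universal Donsker property of halfspaces: symmetrization combined with Dudley's entropy integral for the VC class $\{\one_H : H\in\sH_0\}$ bounds the conditional Rademacher process by a VC constant times $(\sum_i(\eta_i')^2)^{1/2}$ (without a $\sqrt{\log n}$ penalty), and a uniform version of Kolmogorov's criterion -- via an Ottaviani--L\'evy maximal inequality for random series in $\ell^\infty(\sH_0)$ -- yields almost sure uniform convergence of the series $\sum_i[\eta_i'\one_H(X_i) - \delta_i\mu(H)]/i^{1/p}$; Kronecker's lemma then delivers the claim.

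The main obstacle is this last uniform step: a naive union bound over the $O(n^d)$ combinatorially distinct halfspace projections would cost a $\sqrt{\log n}$ factor that is not absorbed by the $o(n^{2/p})$ variance improvement, so it is essential to use the sharp VC (universal Donsker) bound, which eliminates the logarithm. An alternative and perhaps cleaner route is to invoke directly a known uniform Marcinkiewicz--Zygmund SLLN for $L^p$-envelope VC classes (in the spirit of Alexander or Gin\'e--Zinn) applied to the class $\{(\xi,x)\mapsto\xi\one_H(x) - \mu(H) : H\in\sH_0\}$, which is VC subgraph with $L^p$ envelope $|\xi|+1$.
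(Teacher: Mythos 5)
Your plan is correct in outline, but it takes a genuinely different route from the paper. The paper treats $D_n$ directly as an empirical process over the single class $\{(t,x)\mapsto t\one_H(x)\colon H\in\sH_0\}$ with envelope $|t|\in\Lp(\theta)$, verifies an integrable bracketing-entropy condition via a bracketing bound for halfspaces (so that $H_{[\,]}\bigl(\eps,\cdot,\Lp(\theta)\bigr)=O(\log(1/\eps))$), and then quotes ready-made uniform laws: the bracketing Glivenko--Cantelli theorem for $p=1$ and Massart--Rio's bracketing Marcinkiewicz--Zygmund theorem for $p>1$. You instead split $\xi_i=1+\eta_i$, dispose of the unweighted part by the bounded LIL for the uniformly bounded VC class of halfspace indicators, and prove the weighted part from scratch: truncation at $i^{1/p}$, Borel--Cantelli for the tails, the $o(n^{1/p})$ centering estimate, symmetrization plus the Haussler--Dudley bound $C_d\bigl(\sum_i a_i^2\bigr)^{1/2}$ for the conditional Rademacher process (which indeed avoids the $\sqrt{\log n}$ a crude union bound would cost), an Ottaviani--L\'evy argument turning Cauchyness in probability of the series $\sum_i\bigl[\eta_i'\one_H(X_i)-\delta_i\mu(H)\bigr]/i^{1/p}$ in $\ell^\infty(\sH_0)$ into a.s.\ convergence, and Kronecker's lemma applied pathwise in the normed space. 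Your route is more self-contained and elementary (uniform entropy instead of bracketing, no appeal to Massart--Rio) at the cost of length; the paper's route is a short reduction to known theorems. Two points to tighten in your write-up: the bound $\E(\eta_i')^2=o\bigl(i^{(2-p)/p}\bigr)$ by itself only gives an $o(1/i)$ term and does not yield $\sum_i \E(\eta_i')^2\, i^{-2/p}<\infty$; you need the standard Fubini computation $\sum_i i^{-2/p}\E\bigl[\eta_1^2\one_{\{|\eta_1|\le i^{1/p}\}}\bigr]\le C_p\,\E|\eta_1|^p<\infty$, which is exactly what your Ottaviani--L\'evy step requires, and the Rademacher bound must be applied with the weighted coefficients $\eta_i'/i^{1/p}$, i.e.\ with $\bigl(\sum_i(\eta_i')^2 i^{-2/p}\bigr)^{1/2}$ rather than $\bigl(\sum_i(\eta_i')^2\bigr)^{1/2}$. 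Your closing alternative --- citing a uniform Marcinkiewicz--Zygmund law for VC-subgraph classes with $\Lp$ envelope --- is the closest in spirit to what the paper actually does, with uniform entropy in place of bracketing entropy.
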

\begin{proof}
  First, recall the definition of bracketing number
  and entropy with bracketing. Let $(\sX,\sA)$ be a measurable space,
  and let $\sF$ be a set of measurable functions $f\colon\sX\to\R$.
  For a measure $\theta$ on $(\sX,\sA)$, let $\|f\|_p$, $p\ge1$, be
  the $\Lp(\theta)$-norm of a function $f$.

  The \emph{bracketing number}
  $N_{[\,]}\bigl(\eps,\sF,\Lp(\theta)\bigr)$ is the minimal number of
  \emph{$\eps$-brackets}
  \begin{displaymath}
    B_{l,u}=\{f\colon l(x)\le f(x)\le u(x),x\in\sX\}
  \end{displaymath}
  with $l,u\in\Lp(\theta)$ and $\|u-l\|_p<\eps$, needed to cover the
  set $\sF$.  The \emph{entropy with bracketing}
  $H_{[\,]}\bigl(\eps,\sF,\Lp(\theta)\bigr)$ is defined as
  $\log N_{[\,]}\bigl(\eps,\sF,\Lp(\theta)\bigr)$, see
  Definition~2.1.6 in \cite{VaartWellner}.
  
  Denote by $\nu$ the law of $\xi_1$ and by $\theta=\nu\otimes\mu$ the
  law of $(\xi_1,X_1)$. Then the empirical measure corresponding to
  $\theta$ is
  \begin{equation}
    \label{eq:theta-n}
    \empm=n^{-1}\sum_{i=1}^n\delta_{(\xi_i,X_i)}.
  \end{equation}
  Denote by
  $\sF_0$ the family of functions $f_{H}\colon\R\times\Rd\to\R$,
  $H\in\sH_0$, given by $f_{H}(t,x)=t\one_{H}(x)$.  Since
  \begin{equation}
    \label{eq:mutheta}
    \emm(H)=\int_{\R\times\Rd}f_{H}\,\mathrm{d}\empm,\qquad
    \mu(H)=\int_{\R\times\Rd}f_{H}\,\mathrm{d}\theta,
  \end{equation}
  we can write
  \begin{displaymath}
    D_n=\sup_{f\in\sF_0}\left|\int_{\R\times\Rd}f\,\mathrm{d}\empm-
      \int_{\R\times\Rd}f\,\mathrm{d}\theta \right|.
  \end{displaymath}
  
  It follows from Corollary 6 in \cite{dutta22} that $\sH$ can be
  covered by $n_\delta=(2/\delta)^{c_d}$ $\delta$-brackets with some
  $c_d=O(d)$: for each $\delta>0$ there exists a finite family
  $(A_i^1,A_i^2)$, $i=1,\ldots,n_\delta$, of Borel sets in $\Rd$ with
  $\mu(A_i^2\setminus A_i^1)<\delta$ such that each $H\in\sH$
  satisfies $A_i^1\subset H\subset A_i^2$ for some $i$. Then, the
  functions
  \begin{align*}
    l_i=\min\{f_{A_i^1},f_{A_i^2}\},\quad  
    u_i=\max\{f_{A_i^1},f_{A_i^2}\}
  \end{align*}
  with $f_{A_i^j}(t,x)=t\one_{A_i^j}(x)$, $i=1,\ldots,n_\delta$,
  $j=1,2$, form a family of $\eps$-brackets for $\sF_0$ with
  $\eps=\|\xi_1\|_p\,\delta^{1/p}$. This follows from
  $l_i\le f_{H}\le u_i$ and
  \begin{displaymath}
    \|u_i-l_i\|_p^p=\|f_{A_i^2}-f_{A_i^1}\|_p^p
    =\int_{\R}|t|^p\,\mathrm d\nu\;\int_{\Rd}
    \one_{A_i^2\setminus A_i^1}(x)\,\mathrm d\mu
    =\E|\xi_1|^p\;\mu(A_i^2\setminus A_i^1)<\|\xi_1\|_p^p\,\delta.
  \end{displaymath}
  Hence,
  \begin{displaymath}
    H_{[\,]}\bigl(\eps,\sF_0,\Lp(\theta)\bigr)\le\log(2/\delta)^{c_d}\sim
    pc_d\log(1/\eps)\quad \text{as}\; \eps\downarrow0,
  \end{displaymath}
  and
  \begin{displaymath}
    \int_0^1
    \bigl(H_{[\,]}\bigl(\eps,\sF_0,\Lp(\theta)\bigr)\bigr)^{1-1/p}\,\mathrm
    d\eps<\infty.
  \end{displaymath}
  
  Since $\sH_0$ is countable and the envelope
  $F(t,x)=\sup_{f\in\sF_0}|f(t,x)|=|t|$ belongs to $\Lp(\theta)$ due
  to $\int_{\R\times\Rd}|t|^p\,\mathrm{d}\theta=\E|\xi_1|^p<\infty$,
  we may apply Theorem~2.4.1 in \cite{VaartWellner} if $p=1$ and
  Theorem~1 in \cite{mas:rio98} if $p\in(1,2)$ to obtain
  \eqref{eq:MZSLLN}.
\end{proof}

\begin{proof}[Proof of Theorem \ref{th:MZSLLN}]
  Fix any $\eps>0$. Lemma \ref{lem:MZSLLN} ensures that
  $D_n<\frac\eps2 n^{\frac{1-p}p}$ a.s.~for large $n$. By the
  Marcinkiewicz--Zygmund strong law of large numbers for
  $\{\xi_i,i\ge1\}$, we also have
  $|\bar\xi_n-1|<\frac\eps2 n^{\frac{1-p}p}$ a.s.~for large $n$.
  Hence, for such $n$, we have
  \[1-\bigl(\alpha+\eps n^{\frac{1-p}p}\bigr)<\bar\xi_n-\alpha-\frac\eps2 n^{\frac{1-p}p}<\bar\xi_n-\alpha+\frac\eps2 n^{\frac{1-p}p}<1-\bigl(\alpha-\eps n^{\frac{1-p}p}\bigr).\]
  
  Therefore, for large $n$, the following inclusions hold:
  \begin{align*}
    \cap \bigl\{H\in\sH_0& \colon \mu(H)>1-(\alpha+\eps
    n^{\frac{1-p}p})\bigr\}
     \:\subset\:
     \cap \bigl\{H\in\sH_0\colon \mu(H)>\bar\xi_n-\alpha-\eps n^{\frac{1-p}p}/2\bigr\}\\
    &\subset\;
    \cap \bigl\{H\in\sH_0\colon \emm(H)>\bar\xi_n-\alpha \bigr\}
    \;\subset\;
    \cap \bigl\{H\in\sH_0\colon \mu(H)>\bar\xi_n-\alpha+\eps n^{\frac{1-p}p}/2\bigr\}\\
    &\subset\;
      \cap \bigl\{H\in\sH_0\colon \mu(H)>1-(\alpha-\eps n^{\frac{1-p}p}) \bigr\}.
  \end{align*}
  The result now follows from Lemma~\ref{th:count}.
\end{proof}

\section{Law of the iterated logarithm}
\label{sec:LIL}

In this section, we consider the law of the iterated logarithm for
empirical depth trimmed regions $\emp(\alpha)$ in the inclusion sense.
Define the numerical sequence
\begin{equation*}
  \label{eq:loglog}
  \lambda_n=\sqrt{2n^{-1}\log\log n}, \quad n\geq 3.
\end{equation*}

\begin{theorem}
  \label{th:LIL}
  Assume that $M=\E\xi_1^2<\infty$.
  Then
  \begin{enumerate}
  \item[(i)] for all $\alpha\in(0,1]$ and any
    $\gamma>\sqrt{M\alpha-\alpha^2}$ 
    \begin{displaymath}
      \sR\bigl(\alpha+\gamma\lambda_n\bigr)
      \subset\emp(\alpha)\subset
      \sR\bigl(\alpha-\gamma\lambda_n\bigr)
    \end{displaymath}
    almost surely for all sufficiently large $n$;
  \item[(ii)] under (C1), (C2), and for all $\alpha\in(0,\alpha_{\max})$,
    for any $\gamma<\sqrt{M\alpha-\alpha^2}$ almost surely there exist
    sequences $n'_k,n''_k\to\infty$ such that
    \begin{displaymath}
      \sR\bigl(\alpha+\gamma\lambda_{n'_k}\bigr)
      \not\subset\emp[{n'_k}](\alpha)\quad\text{and}
      \quad\emp[{n''_k}](\alpha)\not\subset
      \sR\bigl(\alpha-\gamma\lambda_{n''_k}\bigr);
    \end{displaymath}
  \item[(iii)] for any $\gamma>0$ almost surely there exists a sequence
    $n'''_k\to\infty$ such that
    \begin{displaymath}
      \sR\bigl(\alpha+\gamma\lambda_{n'''_k}\bigr)
      \subset\emp[{n'''_k}](\alpha)\subset
      \sR\bigl(\alpha-\gamma\lambda_{n'''_k}\bigr).
    \end{displaymath}
  \end{enumerate}
\end{theorem}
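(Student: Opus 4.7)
The three claims are LIL counterparts of Theorem~\ref{th:MZSLLN}, so my plan is to rerun the chain of inclusions at the end of its proof with the Marcinkiewicz--Zygmund bound from Lemma~\ref{lem:MZSLLN} replaced by an appropriate LIL for the empirical process indexed by halfspaces. The bracketing entropy estimate from the proof of Lemma~\ref{lem:MZSLLN} remains available, and under $M<\infty$ the envelope $|t|$ lies in $\Lp[2](\theta)$; these are exactly the hypotheses needed for a bracketing-type LIL for empirical processes of Kuelbs--Dudley type.

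For part~(i), the observation driving the sharp constant is that only halfspaces $G$ with $\mu(G)$ close to $\alpha$ require a tight LIL bound, since halfspaces with $|\mu(G)-\alpha|\ge\delta$ carry $\Omega(\delta)$ slack in the target inclusion and are absorbed by the SLLN of Theorem~\ref{th:MZSLLN}. For each $\delta>0$, I would apply the bracketing LIL to
\[
D_n^\delta=\sup\{|\emm(G)-\mu(G)|\colon G\in\sH_0,\,|\mu(G)-\alpha|\le\delta\},
\]
obtaining $\limsup_n D_n^\delta/\lambda_n\le\sup\{\sqrt{M\mu(G)-\mu(G)^2}\colon|\mu(G)-\alpha|\le\delta\}$ a.s., which tends to $\sqrt{M\alpha-\alpha^2}$ as $\delta\downarrow 0$, since $\operatorname{Var}(\xi_1\one_G(X_1))=M\mu(G)-\mu(G)^2$. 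Combined with $|\bar\xi_n-1|=O(\lambda_n)$ from Hartman--Wintner, the inclusion chain from the proof of Theorem~\ref{th:MZSLLN} yields (i) for any $\gamma>\sqrt{M\alpha-\alpha^2}$.

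For part~(ii), fix $x\in\partial\sR(\alpha)$ with a minimal halfspace $H^*$, so $\mu(H^*)=\alpha$ and $\partial H^*$ supports $\sR(\alpha)$ by Lemma~\ref{lem:level}. The one-dimensional Hartman--Wintner LIL applied to $\xi_i\one_{H^*}(X_i)$ of variance $M\alpha-\alpha^2$ yields $\liminf_n(\emm(H^*)-\alpha)/\lambda_n=-\sqrt{M\alpha-\alpha^2}$. Along a subsequence $n'_k$ with $\emm[{n'_k}](H^*)<\alpha-\gamma\lambda_{n'_k}$, any $y$ in the interior of $\sR(\alpha)$ (so $D(y)>\alpha$) lies in $\sR(\alpha+\gamma\lambda_{n'_k})$ for large $n'_k$ and is strictly interior to $H^*$, so the halfspace $H^*$ itself witnesses $\hat D_{n'_k}(y)<\alpha$, giving $y\notin\emp[{n'_k}](\alpha)$. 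The second non-inclusion is obtained symmetrically by applying the $\limsup$ direction of the functional LIL to halfspaces $G$ with $\mu(G)$ near $\alpha$, producing along some $n''_k$ a halfspace $G_{n''_k}$ with $\mu(G_{n''_k})<\alpha-\gamma\lambda_{n''_k}$ and $\emm[{n''_k}](G_{n''_k})\ge\alpha$, which yields a witness point in $\emp[{n''_k}](\alpha)\setminus\sR(\alpha-\gamma\lambda_{n''_k})$.

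For part~(iii), I would invoke the functional Kuelbs-type LIL: under the bracketing entropy of Lemma~\ref{lem:MZSLLN}, the sequence $\lambda_n^{-1}(\emm-\mu)$ is a.s.\ relatively compact in $\ell^\infty(\sH_0)$ with limit set equal to the unit ball of the intrinsic RKHS, which contains zero. Thus a random subsequence $n'''_k$ exists along which $\sup_{G}|\emm[{n'''_k}](G)-\mu(G)|/\lambda_{n'''_k}\to 0$ and simultaneously (treating $\bar\xi_n$ as the empirical mass of $\Rd$) $(\bar\xi_{n'''_k}-1)/\lambda_{n'''_k}\to 0$; running the inclusion chain along this subsequence gives the sandwich for every $\gamma>0$. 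The main obstacle I anticipate is the sharp constant in~(i): a naive global LIL over all halfspaces would give only the larger $\sqrt{M/4}$, so the localization $\delta\downarrow0$ must be carefully coordinated with the slack argument on the complementary class, and a secondary subtlety is the extraction of the witness point in the second half of (ii), which requires the functional LIL rather than a single-halfspace Hartman--Wintner statement.
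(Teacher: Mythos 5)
Your overall strategy (localize to halfspaces with $\mu$-measure near $\alpha$, feed in a functional LIL whose constant is the variance $M\mu(H)-\mu^2(H)$, and use that $0$ lies in the cluster set for part (iii)) is the paper's strategy, and two of your pieces are sound: part (iii) works as you describe, and your argument for the first non-inclusion in (ii) — scalar Hartman--Wintner for $\xi_i\one_{H^*}(X_i)$ at a fixed minimal halfspace $H^*$ supporting $\sR(\alpha)$ (Lemma~\ref{lem:level}), witnessed by a fixed point of depth $>\alpha$ — is correct and in fact simpler than the paper's, which uses the compact LIL of Lemma~\ref{lem:LIL} and translated supporting halfspaces. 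However, there are two gaps. The serious one is the second non-inclusion in (ii): producing along $n''_k$ a single halfspace $G_{n''_k}$ with $\mu(G_{n''_k})<\alpha-\gamma\lambda_{n''_k}$ and $\emm[{n''_k}](G_{n''_k})\ge\alpha$ does \emph{not} ``yield a witness point''. A point belongs to $\emp[{n''_k}](\alpha)$ only if \emph{every} halfspace containing it has weighted empirical measure at least $\alpha$, so the two non-inclusions are structurally asymmetric: leaving $\emp(\alpha)$ needs one bad halfspace, entering it needs all halfspaces through the candidate point to be good. This is exactly where the paper's work lies: it picks $x\in\partial\sR(\alpha)$ at which the supporting hyperplane, hence the minimal halfspace $H$, is unique (Schneider, Theorem~2.2.5), takes $x_k\in\partial\sR(\alpha-\gamma\lambda_{n''_k})$ with $x_k\to x$, rules out, by compactness of the normals and the uniform convergence $(\emm[{n''_k}]-\mu)/\lambda_{n''_k}\to h_{w_H}$, that any halfspace through $x_k$ has empirical measure below $\alpha$, and finally perturbs $x_k$ off $\partial\emp[{n''_k}](\alpha)$ using Lemma~\ref{lemma:boundary} and (C1), since $x_k$ itself still lies \emph{in} $\sR(\alpha-\gamma\lambda_{n''_k})$. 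None of this is supplied by ``symmetrically''.

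The secondary gap is the constant in (i). If you literally rerun the intersection chain from Theorem~\ref{th:MZSLLN}, the thresholds are $\bar\xi_n-\alpha$ and the critical halfspaces have $\mu$-measure near $1-\alpha$, where the local LIL constant is $\sqrt{M(1-\alpha)-(1-\alpha)^2}$; adding a separate Hartman--Wintner bound $\sqrt{M-1}\,\lambda_n$ for $|\bar\xi_n-1|$ then gives the sandwich only for $\gamma$ exceeding roughly $\sqrt{(1-\alpha)(M-1+\alpha)}+\sqrt{M-1}$, which is strictly larger than $\sqrt{M\alpha-\alpha^2}$ whenever $\xi_1$ is nondegenerate. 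Your localization at $\{|\mu(G)-\alpha|\le\delta\}$ is the right class, but it must be connected to the inclusions through the depth functions, as in the paper: a point violating an inclusion produces a single violating halfspace whose $\mu$-measure is forced near $\alpha$ by Lemma~\ref{lem:MZSLLN} (equivalently, pass to complementary halfspaces), so $\bar\xi_n$ never enters and no additive penalty arises; with that rewiring your $\delta\downarrow0$ argument gives the sharp constant. A minor further point: the paper's Lemma~\ref{lem:LIL} rests on the Alexander--Talagrand compact LIL for VC-graph classes; if you prefer bracketing you need the $\Lp[2](\theta)$-bracketing entropy (the Lemma~\ref{lem:MZSLLN} computation redone with $p=2$, using $M<\infty$) and a compact-LIL theorem valid under it, which should be cited explicitly since the cluster-set description with the exact extremal functions is what parts (ii) and (iii) consume.
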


We will need some preliminary lemmas.  As in the proof of
Lemma~\ref{lem:MZSLLN}, denote by $\nu$ the law of $\xi_1$ and by
$\theta=\nu\otimes\mu$ the law of $(\xi_1,X_1)$.  Let
$l^{\infty}(\sH)$ be the space of all bounded functions
$h\colon\sH\to\R$ with
\begin{displaymath}
  \|h\|_{\sH}=\sup_{H\in\sH}|h(H)|<\infty.
\end{displaymath}
	
\begin{lemma}
  \label{lem:LIL}
  Let
  \begin{multline}
    \label{eq:K}
    \mathbb{K}=
    \Bigl\{
    h_w\colon H\mapsto\int_{\R\times H}t w(t,x)\,\mathrm{d}\theta,\\
    \text{ where } w\in L^2(\theta),\;
    \int_{\R\times\Rd}w(t,x)\,\mathrm{d}\theta=0,\;
    \int_{\R\times\Rd}w^2(t,x)\,\mathrm{d}\theta\leq 1		
    \Bigr\rbrace,		
  \end{multline}
  which is a compact subset of $l^{\infty}(\sH)$.  Then
  \begin{displaymath}
    \inf_{h_w\in\mathbb{K}}
    \left\|\frac{\emm-\mu}{\lambda_{n}}-h_w \right\|_{\sH}
    =\inf_{h_w\in\mathbb{K}}\sup_{H\in\sH}
    \left|\frac{\emm(H)-\mu(H)}{\lambda_{n}}-h_w(H)\right|\to 0
    \quad \text{a.s. as} \; n\to\infty.
  \end{displaymath}
  Moreover, almost surely, the set of limit points of
  $\bigl\{(\emm-\mu)/\lambda_{n},n\ge1\bigr\}$ in $l^\infty(\sH)$ is
  exactly $\mathbb{K}$.
\end{lemma}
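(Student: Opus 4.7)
\medskip
\noindent
The plan is to recognise the lemma as a Strassen-type functional law of the iterated logarithm for the empirical process indexed by the class
\begin{displaymath}
  \sF=\{f_H\colon (t,x)\mapsto t\one_H(x),\; H\in\sH\}
\end{displaymath}
on the product space $(\R\times\Rd,\theta)$. By \eqref{eq:mutheta}, the process $(\emm(H)-\mu(H))/\lambda_n$, $H\in\sH$, coincides with $\sqrt{n/(2\log\log n)}\,\bigl(\empm(f_H)-\theta(f_H)\bigr)$, where $\empm$ is the empirical measure defined in \eqref{eq:theta-n}. The lemma is therefore the statement that this sequence of random elements of $l^\infty(\sH)$ is almost surely relatively compact with cluster set equal to $\mathbb{K}$.

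For the functional LIL to apply, two ingredients are needed: an $\Lp[2](\theta)$-integrable envelope and a sufficiently small $\Lp[2]$-bracketing entropy. The envelope $F(t,x)=|t|$ belongs to $\Lp[2](\theta)$ since $\theta(F^2)=\E\xi_1^2=M<\infty$. Repeating the bracketing construction from the proof of Lemma~\ref{lem:MZSLLN} with $p=2$, Corollary~6 of \cite{dutta22} yields
\begin{displaymath}
  H_{[\,]}\bigl(\eps,\sF,\Lp[2](\theta)\bigr)=O\bigl(\log(1/\eps)\bigr),\quad \eps\downarrow 0,
\end{displaymath}
so the bracketing integral $\int_0^1\sqrt{H_{[\,]}(\eps,\sF,\Lp[2](\theta))}\,\mathrm d\eps$ is finite. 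The standard functional LIL for empirical processes then ensures that, almost surely, the sequence $\sqrt{n/(2\log\log n)}\,(\empm-\theta)$ is relatively compact in $l^\infty(\sF)$ and its cluster set equals the unit ball of the reproducing kernel Hilbert space (RKHS) of the centred Gaussian process on $\sF$ with covariance
\begin{displaymath}
  K(f_H,f_{H'})=\theta(f_Hf_{H'})-\theta(f_H)\theta(f_{H'})=M\mu(H\cap H')-\mu(H)\mu(H').
\end{displaymath}

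It remains to identify this RKHS unit ball with $\mathbb{K}$. The factorisation $K(f,g)=\bigl\langle f-\theta(f),\,g-\theta(g)\bigr\rangle_{\Lp[2](\theta)}$ exhibits $K$ as a Gram kernel with feature map $f\mapsto f-\theta(f)$, and the general RKHS description shows that the unit ball consists of the functions $f\mapsto\theta(fw)$ with $w\in\Lp[2](\theta)$, $\theta(w)=0$ and $\theta(w^2)\le 1$. Substituting $f=f_H$ yields $\theta(f_Hw)=\int_{\R\times H}tw(t,x)\,\mathrm d\theta=h_w(H)$, which is precisely the parametrisation of $\mathbb{K}$ from \eqref{eq:K}; compactness of $\mathbb{K}$ in $l^\infty(\sH)$ is intrinsic to the RKHS unit ball of a $\theta$-pre-Gaussian class and is implied by the bracketing estimate above.

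The main obstacle will be to quote a version of the functional LIL that fits the present setting, namely empirical processes indexed by a class with an unbounded but $\Lp[2]$-integrable envelope. Several classical formulations assume uniform boundedness of the class; one therefore either appeals to a version stated directly under an $\Lp[2]$-envelope hypothesis, or reduces to the bounded case by truncating $\xi_i$ at a level growing slowly enough and controlling the residual by the classical scalar LIL for $\xi_i$. A secondary technical point, measurability over the uncountable family $\sH$, is handled by passing to the countable subclass $\sH_0$ of Lemma~\ref{th:count}, the bracketing bound transferring the control automatically from $\sH_0$ to all of $\sH$.
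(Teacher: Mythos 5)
Your strategy is essentially the same as the paper's in its architecture: reduce the lemma to a compact (Strassen-type) LIL for the empirical process indexed by the class $\sF=\{f_H(t,x)=t\one_H(x)\colon H\in\sH\}$, and identify the cluster set with the unit ball of the RKHS of the limiting Gaussian process, which is exactly $\mathbb{K}$ in \eqref{eq:K} (your covariance computation $M\mu(H\cap H')-\mu(H)\mu(H')$ and the feature-map identification are correct and consistent with Lemma~\ref{lem:supK}). Where you diverge is in the sufficient conditions you verify and the theorem you invoke: you go through the $\Lp[2](\theta)$-bracketing integral (an Ossiander-type Donsker argument) and then appeal to an unnamed ``standard'' functional LIL, explicitly flagging the unbounded envelope $F(t,x)=|t|$ as the main obstacle and proposing a truncation fallback. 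The paper resolves exactly this point by citing Theorem~2.3 of \cite{alex:tal89}, which is stated for countably determined VC graph classes with an envelope satisfying only $\E[F^2/\log\log F]<\infty$; its remaining hypotheses are checked by showing that $\sF$ is a VC graph class (inherited from halfspaces via Lemma~2.6.18(vi) of \cite{VaartWellner}) and that $\sF$ is totally bounded in the \emph{centred} $\Lp[2](\theta)$ pseudometric, which the paper obtains by dominating that pseudometric by the symmetric-difference pseudometric on $\sH$ and reusing the bracketing bound from Lemma~\ref{lem:MZSLLN}. So nothing in your argument would fail---your route can be closed by combining Ossiander's bracketing CLT with the Kuelbs--Dudley/Dudley--Philipp strong approximation results, which give the compact LIL with RKHS cluster set for Donsker classes with square-integrable envelope---but the decisive citation is precisely the step you leave open, and the paper's choice of \cite{alex:tal89} makes the truncation detour unnecessary; your measurability remark (passing to $\sH_0$) matches the paper's ``countably determined'' observation.
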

\begin{proof}
  Denote by $\sF$ the set of functions $f_{H}(t,x)=t\one_{H}(x)$ with
  $H\in\sH$, by $F(t,x)=|t|$ its envelope, and by $\empm$ the
  empirical measure defined at \eqref{eq:theta-n}.  The set $\sF$ is a
  bounded countable determined subset of $L^2(\theta)$ (see page~158
  in \cite{alex:tal89}), which is also a Vapnik--\v{C}ervonenkis (or
  VC) graph class. The latter follows from the VC-property of the set
  of halfspaces in $\Rd$ and Lemma~2.6.18(vi) in \cite{VaartWellner}.
	
  Moreover, $\sF$ is totally bounded with respect to the centred
  $\Lp[2](\theta)$-pseudometric given by
  \begin{displaymath}
    \rho_o\bigl(f_{H_1},f_{H_2}\bigr)=\biggl(\int_{\R\times\Rd}
    \bigl(f_{H_2}^o-f_{H_1}^o\bigr)^2
    \,\mathrm{d}\theta\biggr)^\frac{1}{2},\quad H_1,H_2\in\sH,
  \end{displaymath}
  with
  \begin{displaymath}
    f_{H_j}^o(t,x)=f_{H_j}(t,x)-\int_{\R\times\Rd}f_{H_j}\,\mathrm d\theta=
    f_{H_j}(t,x)-\mu(H_j), \quad j=1,2.
  \end{displaymath}
  Indeed, for all $H_1,H_2\in\sH$,
  \begin{align*}
    \rho_o^2\bigl(f_{H_1},f_{H_2}\bigr)
    &=\int_{\R\times\Rd}\Bigl(\bigl(
      f_{H_2}(t,x)-f_{H_1}(t,x)\bigr)-\bigl(\mu(H_2)-\mu(H_1)\bigr)\Bigr)^2
      \,\mathrm d\theta\\&\le 2\int_{\R\times\Rd}
    \bigl(f_{H_2}(t,x)-f_{H_1}(t,x)\bigr)^2\,\mathrm d\theta+
	2\int_{\R\times\Rd}
    \bigl(\mu(H_2)-\mu(H_1)\bigr)^2\,\mathrm d\theta\\
    &\le
      2\,\E\xi_1^2\,\mu\bigl(H_1\Delta H_2\bigr)
      +2\,\mu\bigl(H_1\Delta H_2\bigr)=2(M+1)\,
      \mu\bigl(H_1\Delta H_2\bigr),
  \end{align*}
  so that $\rho_o$ is bounded by the symmetric difference pseudometric
  on $\sH$.  Hence, the total boundedness of $\sF$ with respect to
  $\rho_o$ follows from that of $\sH$ with respect to the symmetric
  difference pseudometric. The latter holds due to finiteness of
  bracketing numbers, see the proof of Lemma~\ref{lem:MZSLLN}.
  Finally, the envelope function $F$ satisfies
  \begin{displaymath}
    \E\left[\frac{F^2(\xi_1,X_1)}{\log\log F(\xi_1,X_1)}\right]
    \le\E\xi_1^2<\infty\,.
  \end{displaymath}
  Thus, the set $\sF$ fulfils all assumptions of Theorem~2.3 in
  \cite{alex:tal89}. Hence, for empirical measures $\empm$, the
  compact law of the iterated logarithm holds with the limiting set,
  which consists of all functions $h_{w}\colon f_{H}\mapsto
  \int_{\R\times\Rd}f_{H}(t,x)w(t,x)\,\mathrm{d}\theta$, where $w$
  satisfies conditions on the right-hand side of \eqref{eq:K}.

  Both claims are confirmed, since $f_{H}(t,x)=t\one_{H}(x)$ and, by
  \eqref{eq:mutheta},
  \begin{displaymath}
    \int_{\R\times\Rd}f_{H}\,\mathrm
    d\empm-\int_{\R\times\Rd}f_{H}\,\mathrm d\theta=\emm(H)-\mu(H). \qedhere
  \end{displaymath}
\end{proof}

\begin{lemma} \label{lem:supK}
  For any $H\in\sH$,
  \begin{gather}
    \label{eq:supK}
    \sup_{h_w\in\mathbb K}h_w(H)=\sqrt{M\mu(H)-\mu^2(H)},\\
    \inf_{h_w\in\mathbb K}h_w(H)=-\sqrt{M\mu(H)-\mu^2(H)}.
    \label{eq:infK}
  \end{gather}
\end{lemma}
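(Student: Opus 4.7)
The plan is to recognize \eqref{eq:supK} as the solution of a constrained optimization problem on the Hilbert space $\Lp[2](\theta)$. For fixed $H\in\sH$, set $g(t,x)=t\one_H(x)$ and observe that $h_w(H)=\int_{\R\times\Rd}g\,w\,\mathrm d\theta=\langle g,w\rangle_{\Lp[2](\theta)}$, so the task is to compute
\begin{displaymath}
\sup\bigl\{\langle g,w\rangle\colon w\in V,\ \|w\|_2\leq 1\bigr\},
\end{displaymath}
where $V=\{w\in \Lp[2](\theta)\colon\int w\,\mathrm d\theta=0\}$ is the orthogonal complement of the one-dimensional subspace of constants (recall that $\theta$ is a probability measure, so $\|\one\|_2=1$).

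First I would invoke the standard Hilbert space fact that this supremum equals $\|P_V g\|_2$, where $P_V$ is the orthogonal projection onto $V$, with the maximizer $w^\ast=P_V g/\|P_V g\|_2$. Since $V=\{\one\}^\perp$, the projection is
\begin{displaymath}
P_V g=g-\langle g,\one\rangle\,\one=g-\int_{\R\times\Rd}t\,\one_H(x)\,\mathrm d\theta=g-\E\xi_1\cdot\mu(H)=g-\mu(H),
\end{displaymath}
using $\E\xi_1=1$ and the product structure $\theta=\nu\otimes\mu$.

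Next I would compute the squared norm by Fubini:
\begin{align*}
\|P_V g\|_2^2
&=\int_{\R\times\Rd}\bigl(t\one_H(x)-\mu(H)\bigr)^2\,\mathrm d\theta\\
&=\E\xi_1^2\cdot\mu(H)-2\mu(H)\,\E\xi_1\cdot\mu(H)+\mu^2(H)\\
&=M\mu(H)-\mu^2(H),
\end{align*}
which yields \eqref{eq:supK}. Noting that the constraint set defining $\mathbb{K}$ is symmetric under $w\mapsto -w$ (both $\int w\,\mathrm d\theta=0$ and $\int w^2\,\mathrm d\theta\le 1$ are preserved), the infimum in \eqref{eq:infK} is just the negative of the supremum, giving $-\sqrt{M\mu(H)-\mu^2(H)}$.

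There is essentially no obstacle: the only subtlety is making sure to project onto $V$ rather than taking the plain Cauchy--Schwarz bound $\|g\|_2$, which would ignore the mean-zero constraint and produce the wrong value $\sqrt{M\mu(H)}$. Once the projection is identified correctly, the computation is a one-line variance calculation using only $\E\xi_1=1$ and $\E\xi_1^2=M$. The extremizer $w^\ast(t,x)=\bigl(t\one_H(x)-\mu(H)\bigr)/\sqrt{M\mu(H)-\mu^2(H)}$ lies in $\Lp[2](\theta)$ (since $M<\infty$), so it is a legitimate element of the constraint set and the supremum is attained.
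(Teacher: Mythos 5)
Your proof is correct and is essentially the paper's argument in Hilbert-space language: the paper also uses the mean-zero constraint to replace $g$ by the centred function $t\one_H(x)-\mu(H)$, applies Cauchy--Schwarz (your projection bound), and exhibits exactly the same extremizer $w_H=(t\one_H(x)-\mu(H))/\sqrt{M\mu(H)-\mu^2(H)}$, with the infimum obtained by the same $w\mapsto-w$ symmetry. No substantive difference, and your variance computation matches the paper's.
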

\begin{proof}
  Since, by \eqref{eq:K}, the integral of $w$ vanishes,   
  \begin{displaymath}
    h_w(H)=
    \int_{\R\times\Rd}t\one_{H}(x)w(t,x)\,\mathrm d\theta=
    \int_{\R\times\Rd}\bigl(t\one_{H}(x)-\mu(H)\bigr)w(t,x)
    \mathrm d\theta
  \end{displaymath}
  for all $h_w\in\mathbb K$. Hence, by the Cauchy--Schwarz inequality,
  \begin{align*}
    h_w^2(H)
    &\le\int_{\R\times\Rd}
      \bigl(t\one_{H}(x)-\mu(H)\bigr)^2\,\mathrm d\theta
      \int_{\R\times\Rd} w^2(t,x)\,\mathrm d\theta\\
    &\le\operatorname{Var}\bigl(\xi_1\one_{H}(X_1)\bigr)=M\mu(H)-\mu^2(H).
  \end{align*}
  It is straightforwardly checked that both inequalities above turn
  into equalities for
  \begin{equation}
    \label{eq:extf}
    w_{H}(t,x)=\frac{t\one_{H}(x)-\mu(H)}{\sqrt{M\mu(H)-\mu^2(H)}},
  \end{equation}
  and $h_{w_{H}}$ belongs to $\mathbb K$. Thus, \eqref{eq:supK}
  follows. The proof of \eqref{eq:infK} is similar.
\end{proof}

\begin{proof}[Proof of Theorem \ref{th:LIL}]
  We begin with the right-hand inclusion in (i). Assume the opposite,
  i.e., that there are $n_k\to\infty$ and $x_k\in\Rd$ such that
  $x_k\in\emp[n_k](\alpha)$ and
  $x_k\notin\sR(\alpha-\gamma\lambda_{n_k})$ for all $k\ge1$. By
  definition of the halfspace depth trimmed regions, 
  $\emm[n_k](H_k)\ge\alpha$ and $\mu(H_k)<\alpha-\gamma\lambda_{n_k}$
  for any $k\ge1$ and some $H_k\in\sH$.  Moreover, by Lemma
  \ref{lem:MZSLLN}, for any $\eps>0$, we have 
  $\mu(H_k)>\alpha-\gamma\lambda_{n_k}-\eps$ for all sufficiently
  large $k$. Hence, for such $k$,
  \[\frac{\emm[n_k](H_k)-\mu(H_k)}{\lambda_{n_k}}>\gamma,\]
  whence
  \begin{equation}\label{eq:bnd1}
    \sup_{\substack{H\in\sH\colon\\\alpha-\gamma\lambda_{n_k}-\eps<\mu(H)<
	\alpha-\gamma\lambda_{n_k}}}\hspace{-25pt}
    \frac{\emm[n_k](H)-\mu(H)}{\lambda_{n_k}}>\gamma.
  \end{equation}

  On the other hand, Lemma \ref{lem:LIL} for each $\delta>0$ and
  $k\ge k'_0(\delta)$ imply that
  \begin{equation}\label{eq:bnd2}
    \sup_{\substack{H\in\sH\colon\\\alpha-\gamma\lambda_{n_k}-\eps<\mu(H)<
	\alpha-\gamma\lambda_{n_k}}}\hspace{-25pt}
    \frac{\emm[n_k](H)-\mu(H)}{\lambda_{n_k}}<
    \delta+\hspace{-25pt}\sup_{\substack{H\in\sH\colon\\
	\alpha-\gamma\lambda_{n_k}-\eps<\mu(H)<\alpha-\gamma\lambda_{n_k}}}
    \hspace{-25pt}\sup_{h_w\in\mathbb K}h_w(H).
  \end{equation}
  By Lemma~\ref{lem:supK}, the right-hand side for large $k$ does not
  exceed $\delta+\sup_{\alpha-2\eps<x<\alpha}\sqrt{Mx-x^2}$. Letting
  first $\eps\downarrow0$ and then $\delta\downarrow0$, we obtain from
  \eqref{eq:bnd1} and \eqref{eq:bnd2} that
  $\gamma\le\sqrt{M\alpha-\alpha^2}$, which is a contradiction with
  the condition imposed in (i).  The left-hand inclusion in (i) is
  proved in the same way.

  Let us now turn to the proof of the left-hand non-inclusion in (ii).
  Consider an arbitrary point $x\in\partial\sR(\alpha)$. By
  Lemma~\ref{lem:level}, $D(x)=\alpha$, and, by (C1), there exists a
  halfspace $H$ supporting $\sR(\alpha)$ at $x\in\partial H$ and with
  $\mu(H)=\alpha$.  Then, by Lemma~\ref{lem:LIL}, there exists a
  sequence $n'_k\to\infty$ such that
  \begin{equation}
    \label{eq:def_n'_k}
    \frac{\emm[n'_k]-\mu}{\lambda_{n'_k}}\to-h_{w_{H}}\quad 
    \text{ a.s.~in }l^\infty(\sH)\;\text{ as }\; k\to\infty,
  \end{equation}
  where the extremal function $h_{w_{H}}$ is defined by
  \eqref{eq:extf} and \eqref{eq:K}. Written explicitly,
  \begin{equation}
    \label{eq:hwex}
    h_{w_{H}}(G)=\frac{M\mu(G\cap H)
      -\alpha\mu(G)}{\sqrt{M\alpha-\alpha^2}},\quad G\in\sH.
  \end{equation}
  For $k\ge1$, let $H_k$ denote a translate of $H$ such that
  $\partial H_k$ supports $\partial\sR(\alpha+\gamma\lambda_{n'_k})$
  which is eventually not empty due to $\alpha<\alpha_{\max}$. Choose
  any point
  \begin{displaymath}
    x_k\in\partial H_k\cap\partial\sR(\alpha+\gamma\lambda_{n'_k}).
  \end{displaymath}
  By construction and due to Lemma~\ref{lem:level}(ii),
  $H_k$ is a minimal halfspace at $x_k$, and thus
  $\mu(H_k)=\alpha+\gamma\lambda_{n'_k}$.

  Since the convergence in \eqref{eq:def_n'_k} is uniform over $\sH$
  and by \eqref{eq:hwex},
  \begin{equation}
    \label{eq:mutoh}
    \lim_{k\to\infty}\frac{\emm[n'_k](H_k)-\mu(H_k)}{\lambda_{n'_k}}
    =-\lim_{k\to\infty}h_{w_{H}}(H_k)=
    -\lim_{k\to\infty}\frac{M\mu(H_k\cap H)-
      \alpha\mu(H_k)}{\sqrt{M\alpha-\alpha^2}},
  \end{equation}
  where $\mu(H_k\cap H)=\mu(H)=\alpha$ and $\mu(H_k)\to\alpha$. So,
  the right-hand side in \eqref{eq:mutoh} becomes
  $-\sqrt{M\alpha-\alpha^2}$.  Therefore, for any
  $\gamma<\sqrt{M\alpha-\alpha^2}$ and sufficiently large $k$, 
  \begin{displaymath}
    \frac{\emm[n'_k](H_k)-\mu(H_k)}{\lambda_{n'_k}}<-\gamma,
  \end{displaymath}
  and thus $\emm[n'_k](H_k)<\mu(H_k)-\gamma\lambda_{n'_k}=\alpha$. 
  Hence, $x_k\notin\emp[n'_k](\alpha)$ but
  $x_k\in\sR(\alpha+\gamma\lambda_{n'_k})$ which proves the claim.

  For the proof of the right-hand non-inclusion in (ii), consider a
  point $x\in\partial\sR(\alpha)$ and assume additionally that
  $\sR(\alpha)$ has a unique supporting hyperplane at $x$. The
  existence of such points follows from Theorem~2.2.5 and the argument
  thereafter in \cite{schn2}. By
  Lemma~\ref{lem:level}(ii), this means that the minimal
  halfspace $H$ at such $x$ is unique. As above, there are
  $n''_k\to\infty$ such that
  \begin{equation}
    \label{eq:def_n''_k}
    \frac{\emm[n''_k]-\mu}{\lambda_{n''_k}}\to h_{w_{H}}\quad
    \text{ a.s.~in }l^\infty(\sH)\; \text{ as }\; k\to\infty,
  \end{equation}
  where $h_{w_{H}}$ is given by \eqref{eq:hwex}. For $k\ge1$, consider
  some $x_k\in\partial\sR(\alpha-\gamma\lambda_{n''_k})$ such that
  $x_k\to x$. The existence of such sequence is guaranteed by the
  continuity of $\sR(\cdot)$ at $\alpha$ in the Hausdorff metric which
  follows from (C1), (C2), and $\alpha<\alpha_{\max}$.

  We now prove that, for large $k$ and any $G_k\in\sH$ with
  $x_k\in\partial G_k$, we have $\emm[n''_k](G_k)\ge\alpha$. Assume
  the contrary, i.e., there is a subsequence $k_l$ violating this
  condition, which without loss of generality is assumed to be the
  original sequence, so that 
  $G_{k}\in\sH$,  $x_{k}\in\partial G_{k}$ and 
  $\emm[n''_{k}](G_{k})<\alpha$. Denoting by $u_{k}\in\Sphere$
  the outer normal to $G_{k}$, by compactness of $\Sphere$ and
  passing to a subsequence, assume that  $u_{k}\to u\in\Sphere$.

  Let $G=H_u+x$. Since $u_{k}\to u$ and $x_{k}\to x$, (C1) implies
  that $\mu(G_{k})\to\mu(G)$ and $\mu(G_{k}\cap H)\to\mu(G\cap H)$ as
  $k\to\infty$.  By \eqref{eq:def_n''_k} and \eqref{eq:hwex}, 
  \begin{equation}
    \label{eq:limG^-}
    \lim_{l\to\infty}\frac{\emm[n''_{k}](G_{k})-\mu(G_{k})}
    {\lambda_{n''_{k}}}=\lim_{l\to\infty}h_{w_{H}}(G_{k})=h_{w_{H}}(G).
  \end{equation}
  If $\mu(G)>\alpha$ then $\mu(G_{k})$ is at least $\alpha+\eps$ for
  some $\eps>0$ and all sufficiently large $k$, and the left-hand side
  in \eqref{eq:limG^-} becomes $-\infty$, giving a
  contradiction. Thus, $\mu(G)=\alpha=\mu(H)$, and, by uniqueness of
  the minimal halfspace at $x$, $G=H$. Hence, by \eqref{eq:limG^-},
  \begin{equation*}
    \lim_{l\to\infty}\frac{\emm[n''_{k}](G_{k})-\mu(G_{k})}
    {\lambda_{n''_{k}}}=h_{w_{H}}(H)=\sqrt{M\alpha-\alpha^2}>\gamma.
  \end{equation*}
  Since
  $x_{k}\in\partial\sR(\alpha-\gamma\lambda_{n''_{k}})\cap\partial
  G_{k}$, for large $l$,
  \[\emm[n''_{k}](G_{k})>\mu(G_{k})+\gamma\lambda_{n''_{k}}\ge\alpha,\]
  which is a contradiction.

  We have proved that $\emm[n''_k](G_k)\ge\alpha$ for large $k$
  and any $G_k$ which contains $x_k$. Hence,
  $x_k\in\partial\sR(\alpha-\gamma\lambda_{n''_k})\cap\emp[n''_k](\alpha)$
  for large $k$. Moreover, almost surely
  $x_k\notin\partial\emp[n''_k](\alpha)$ for all $k$, since the latter
  boundary is contained in the union of all affine subspaces generated by
  $i$-tuples of points from $\{X_1,\dots,X_n\}$ with $i=1,\ldots,d$, see
  Lemma~\ref{lemma:boundary}, and $\mu$ satisfies (C1). Hence, for
  each large $k$, there is an $\tilde x_k$ from a neighbourhood of
  $x_k$ with
  $\tilde
  x_k\in\emp[n''_k](\alpha)\setminus\sR(\alpha-\gamma\lambda_{n''_k})$
  which was to be proved.

  Let us now prove (iii). By Lemma \ref{lem:LIL}, for any $\gamma>0$
  there exist $n'''_k\to\infty$ such that
  \begin{displaymath}
    \frac{\emm[n'''_k]-\mu}{\lambda_{n'''_k}}\to0\in\mathbb K
    \quad \text{ a.s.~in } l^\infty(\sH).
  \end{displaymath}
  Hence, for any $\gamma>0$ and sufficiently large $k$,
  \begin{equation}
    \label{eq:ineqiii}
    -\gamma<\inf_{H\in\sH}\frac{\emm[n'''_k](H)-\mu(H)}
    {\lambda_{n'''_k}}\le\sup_{H\in\sH}\frac{\emm[n'''_k](H)-\mu(H)}
    {\lambda_{n'''_k}}<\gamma.
  \end{equation}
  If the right-hand inclusion in (iii) were not satisfied, then, as in
  the proof of (i), we would have $\emm[n'''_k](H_k)\ge\alpha$ and
  $\mu(H_k)<\alpha-\gamma\lambda_{n'''_k}$ for some $H_k\in\sH$, which
  contradicts the right-hand inequality in \eqref{eq:ineqiii}. The
  proof of the left-hand inclusion is similar.
\end{proof}

\section{Distance between depth trimmed regions}
\label{sec:diff-depth-pert}

In order to establish the rate of convergence of $\emp(\alpha)$ to $\sR(\alpha)$
in the Hausdorff metric, we first need to study the asymptotic
behaviour of the Hausdorff distance
$\rho_H\bigl(\sR(\alpha),\sR(\beta)\bigr)$ as $\beta\to\alpha$.
For this, we need a further smoothness condition on the depth function
$D$. In the following, we fix some $\alpha\in(0,\alpha_{\max})$.
\begin{enumerate}
\item[(C3)] The depth function $D$ is continuously differentiable in a
  neighbourhood of $\partial\sR(\alpha)$ with $\grad D(x)\ne0$ for
  all $x\in\partial\sR(\alpha)$.
\end{enumerate}

Condition (C3) is generally difficult to check without calculating the
depth function explicitly; we show in Section~\ref{sec:appendix} that
it holds for uniform distributions on a sufficiently wide family of
convex bodies.
While,  in general,  a minimal  halfspace  at $x$  is not  necessarily
unique,  the  differentiability  of $D$  at  $x\in\partial\sR(\alpha)$
imposed in (C3)  guarantees the uniqueness. Indeed, let $u_x$ denote the
unit outer normal to a minimal halfspace at $x$, so that
\begin{displaymath}
  \mu(x+\Hx)=\min_{H\in\sH,\,H\ni x}\mu(H)=\alpha. 
\end{displaymath}
Since, by Lemma \ref{lem:level}, $x+\partial \Hx$ supports
$\partial\sR(\alpha)$, which is the $\alpha$-level surface of $D$,
this hyperplane is orthogonal to $\grad D(x)$. Hence,
\begin{equation}
  \label{eq:outnorm}
  u_x=\frac{\grad D(x)}{\|\grad D(x)\|}.
\end{equation}

For each $u\in\Sphere$, let $\mu_u$ be the measure on $\R$, obtained
as the projection of $\mu$ on the line $\{tu\colon t\in\R\}$.

\begin{enumerate}
\item[(C4)] For any $u$, the measure $\mu_u$ is absolutely continuous
    and admits a density $f_u(t)$, which is continuous in $(t,u)$ and does
    not vanish for all $t$ from the open interval between the
    essential infimum and essential supremum of the support of
    $\mu_u$.
\end{enumerate}

Denote
\begin{displaymath}
  T_{x,u}\mu= f_u(\langle x,u\rangle).
\end{displaymath}
If $\mu$ is absolutely continuous with density $f$, then 
\begin{displaymath}
  T_{x,u}\mu=\int_{u^\perp}f(x+y)\,\mathrm dy,
  \qquad u\in\Sphere,\,x\in\Rd,
\end{displaymath}
where $u^\perp=\{y\in\Rd\colon\langle u,y\rangle=0\}$, so that
$T_{x,u}\mu$ is the \emph{Radon transform} of $f$.

\begin{remark}
  \label{rem:min}
  Note that, due to (C3), $u_x$ defined by \eqref{eq:outnorm} is
  continuous in $x$. Hence, by (C4), $T_{x,u_x}\mu$ is also continuous
  in $x$, and $\min_{x\in\partial\sR(\alpha)}T_{x,u_x}\mu$ is well
  defined and positive. Furthermore, (C4) implies the validity of (C1)
  and (C2).
\end{remark}

For a function $\phi\colon\R^d\to\R$, define a perturbed depth trimmed
region
\begin{displaymath}
  \sR(\alpha; \phi)=\cl\bigl\{x\colon D(x)\geq \alpha+\phi(x)\bigr\}.
\end{displaymath}
Furthermore, let 
\begin{equation}
  \label{eq:Phi}
  \Phi(\phi)=\rho_H\bigl(\sR(\alpha),\sR(\alpha;\phi)\bigr).
\end{equation}
This functional was studied in \cite{mol98} for a general function $D$
and sets $\sR(\alpha)$ and $\sR(\alpha;\phi)$ defined with the
opposite inequality sign.

Following Section~8 in \cite{borovkov}, the functional $\Phi$ is said
to be continuously differentiable if there exists a functional $\Phi'$
defined on continuous functions $\phi\colon\R^d\to\R$ such that
\begin{equation}
  \label{eq:Phi'}
  \delta_n^{-1}\Phi(\delta_n\phi_n)
  \to \Phi'(\phi)\quad
  \text{and}\quad \Phi'(\phi_n)\to \Phi'(\phi)\quad 
  \text{as}\;
  n\to\infty
\end{equation}
for $\delta_n\downarrow0$ and any sequence $\{\phi_n,n\ge1\}$ of (not
necessarily continuous) functions which converge to a continuous
function $\phi$ uniformly on compact sets.

Following \cite{mol98}, define
\begin{gather}
  \label{eq:varpi_+}
  \varpi_+(x,r)=\sup_{y\in B_r(x)}\bigl(D(y)-D(x)\bigr),\quad x\in\Rd,\\
  \varpi_-(x,r)=\inf_{y\in B_r(x)}\bigl(D(y)-D(x)\bigr),\quad x\in\Rd,
  \label{eq:varpi_-}
\end{gather}
where $B_r(x)$ is the closed Euclidean ball of radius $r$ centred at
$x$. Clearly, $\varpi_+$ and $\varpi_-$ are continuous in both
arguments due to continuity of $D$ which follows from (C1).  Moreover,
there is a neighbourhood $\sG(\alpha)$ of $\partial\sR(\alpha)$ such
that $\varpi_+$ and $\varpi_-$ are differentiable in $r$ at $r=0$
uniformly over $x\in\sG(\alpha)$, which is confirmed by the following
lemma.

\begin{lemma}
  \label{lem:varpi}
  Assume that (C3) and (C4) hold. Then, for each $\alpha\in(0,\alpha_{\max})$,
  \begin{displaymath}
    \lim_{r\downarrow0}\sup_{x\in\sG(\alpha)}
    \Bigl|\frac{\varpi_+(x,r)}r-T_{x,u_x}\mu\Bigr|=0,\qquad
    \lim_{r\downarrow0}\sup_{x\in\sG(\alpha)}
    \Bigl|\frac{\varpi_-(x,r)}r+T_{x,u_x}\mu\Bigr|=0.
  \end{displaymath}
  Furthermore, $\grad D(x)=(T_{x,u_x}\mu)u_x$.
\end{lemma}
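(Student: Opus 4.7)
The plan is to prove this in two stages: first extract the asymptotics of $\varpi_\pm(x,r)$ from (C3) alone, then identify the resulting quantity with $T_{x,u_x}\mu$ via a directional derivative computation that combines the $C^1$-structure of $D$ with the Radon transform description of $\mu$ from (C4).

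\emph{Step 1 (reduction to $\|\grad D\|$).} By (C3), we may choose $\sG(\alpha)$ to be a compact neighbourhood of $\partial\sR(\alpha)$ on which $D$ is $C^1$ and $\grad D$ is continuous and nonvanishing; in particular $\grad D$ is uniformly continuous on $\sG(\alpha)$. For $y\in B_r(x)\subset\sG(\alpha)$, the mean value theorem gives $D(y)-D(x)=\langle\grad D(\xi),y-x\rangle$ for some $\xi$ on the segment $[x,y]$. Uniformly in $x\in\sG(\alpha)$, the supremum over $y\in B_r(x)$ is attained in the direction of $\grad D(x)$ up to $o(r)$, so
\begin{equation*}
\frac{\varpi_+(x,r)}{r}=\|\grad D(x)\|+o(1),\qquad\frac{\varpi_-(x,r)}{r}=-\|\grad D(x)\|+o(1),
\end{equation*}
with the $o(1)$-terms uniform on $\sG(\alpha)$.

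\emph{Step 2 (identifying $\|\grad D(x)\|=T_{x,u_x}\mu$).} Fix $x\in\partial\sR(\alpha)$ and set $x_t=x+tu_x$ for small $t>0$. Writing $H_t=x+H_{u_x}(t)$ for the $tu_x$-translate of the minimal halfspace $x+H_{u_x}$ at $x$, we have $x_t\in\partial H_t$ and by (C4)
\begin{equation*}
\mu(H_t)-\alpha=\int_{\langle x,u_x\rangle}^{\langle x,u_x\rangle+t}f_{u_x}(s)\,\mathrm ds=tT_{x,u_x}\mu+o(t),
\end{equation*}
so $D(x_t)\le\mu(H_t)=\alpha+tT_{x,u_x}\mu+o(t)$. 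For the matching lower bound, let $H'_t$ be a minimal halfspace at $x_t$ with outer normal $u_{x_t}$. By (C3) and \eqref{eq:outnorm}, $u_{x_t}\to u_x$ as $t\downarrow 0$, so $x_t-tu_{x_t}=x+t(u_x-u_{x_t})=x+o(t)$; the $C^1$-property of $D$ yields $D(x_t-tu_{x_t})=\alpha+o(t)$. Applying the slab computation with normal $u_{x_t}$ and invoking joint continuity of $f_u$ in $(t,u)$ from (C4),
\begin{equation*}
D(x_t)=\mu(H'_t)=\mu(H'_t-tu_{x_t})+tT_{x,u_x}\mu+o(t)\ge\alpha+tT_{x,u_x}\mu+o(t).
\end{equation*}
Combining both bounds, $\langle\grad D(x),u_x\rangle=T_{x,u_x}\mu$, and since $u_x$ is collinear with $\grad D(x)$, we conclude $\|\grad D(x)\|=T_{x,u_x}\mu$ for every $x\in\partial\sR(\alpha)$.

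\emph{Step 3 (uniformity).} Both $x\mapsto\|\grad D(x)\|$ and $x\mapsto T_{x,u_x}\mu$ are continuous on $\sG(\alpha)$ by (C3), (C4) and Remark~\ref{rem:min}; by shrinking $\sG(\alpha)$, their difference can be made arbitrarily small uniformly. Combined with Step 1, this yields the claimed uniform limits for $\varpi_+/r$ and $\varpi_-/r$. The main obstacle is the lower bound in Step 2: it requires tracking the tilt $u_{x_t}-u_x$ of the minimal halfspace and using uniqueness of $u_x$, which is precisely where the differentiability assumption (C3) and the joint continuity in (C4) become essential.
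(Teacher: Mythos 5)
Your argument is sound in substance but takes a genuinely different route from the paper. The paper never isolates $\|\grad D\|$: it sandwiches $\varpi_+(x,r)$ directly between slab integrals $\int_0^r T_{x+tu_x,u_x}\mu\,\mathrm dt$, using for the upper bound that the translate of the minimal halfspace at $x$ dominates, and for the lower bound the decomposition $D(y)=W_x(y)+\Delta_x(y)$ with $W_x(y)=\mu(y+H_{u_x})$, where $\Delta_x\le 0=\Delta_x(x)$ forces $\grad\Delta_x(x)=0$ and the mean value theorem plus uniform continuity of $\grad D$ and $T_{x,u}\mu$ make the error $o(r)$ uniformly. You instead reduce $\varpi_\pm(x,r)/r$ to $\pm\|\grad D(x)\|$ by calculus from (C3), and then identify $\|\grad D(x)\|=T_{x,u_x}\mu$ through two-sided bounds on the directional derivative along $u_x$; your lower bound (translating a minimal halfspace at $x_t$ back by $tu_{x_t}$ and using $u_{x_t}\to u_x$) plays the role of the paper's $\Delta_x$ step with the same geometric ingredients, just reorganised around the gradient identity (which is essentially the formula of Theorem~\ref{thr:uniform}). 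That part is fine, modulo spelling out that a minimal halfspace at $x_t$ exists by (C1) (implied by (C4)), that $\mu(H'_t-tu_{x_t})\ge D(x_t-tu_{x_t})$ because $x_t-tu_{x_t}$ lies on the boundary of the translated halfspace, and that \eqref{eq:outnorm} (hence uniqueness of $u_{x_t}$ and $u_{x_t}\to u_x$) is available at $x_t$ via Lemma~\ref{lem:level} applied at level $D(x_t)<\alpha_{\max}$.

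The one genuine soft spot is Step 3. You establish the identity $\|\grad D(x)\|=T_{x,u_x}\mu$ only for $x\in\partial\sR(\alpha)$ and then argue that "by shrinking $\sG(\alpha)$" the difference is uniformly small. That only yields, for each $\eps>0$, a neighbourhood on which $\limsup_{r\downarrow0}\sup_x\bigl|\varpi_+(x,r)/r-T_{x,u_x}\mu\bigr|\le\eps$; it does not produce a single neighbourhood on which the limit is exactly $0$, which is what the lemma asserts and what its later uses (uniform differentiability of $\varpi_\pm$ feeding Theorem~\ref{th:mol}) require. The fix is immediate and should be stated: for $\sG(\alpha)$ small enough, every $x\in\sG(\alpha)$ lies on $\partial\sR(\beta)$ with $\beta=D(x)\in(0,\alpha_{\max})$, and your Step 2 argument applies verbatim at level $\beta$, so the identity holds at every point of $\sG(\alpha)$; combined with your (correct, uniform) Step 1 this gives the claimed limits without any shrinking argument.
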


Note that, for compactly supported measures $\mu$ and except for
uniformity in $x\in\sG(\alpha)$, a related result appears as
equation~(2.7) in \cite{molina-fructuoso22}.

\begin{proof}[Proof of Lemma \ref{lem:varpi}]
  We will prove the first equality, the argument for the second is
  similar.  Denote by $\sG(\alpha)$ a sufficiently small open
  neighbourhood of $\partial\sR(\alpha)$. By (C3) and (C4), we may
  assume that $\grad D(x)$ and $T_{x,u}\mu$ are uniformly continuous in
  $\sG(\alpha)$ and $\sG(\alpha)\times\Sphere$, respectively.
	
  By \eqref{eq:varpi_+}, for small $r>0$ we have
  \begin{align}
    \label{eq:varpi-sup}
    \varpi_+(x,r)
    &=\sup_{y\in
      B_r(x)}\bigl(\mu(y+\Hx[y])-\mu(x+\Hx)\bigr)\notag \\
    &\le\sup_{y\in B_r(x)}\bigl(\mu(y+\Hx[x])-\mu(x+\Hx)\bigr)
      =\mu(x+ru_x+\Hx)-\mu(x+\Hx).
  \end{align}
  In the second line, the inequality is due to minimality of
  $y+\Hx[y]$ at $y$, and the ultimate equality holds since the
  supremum is attained at the point of $B_r(x)$ farthest from $x+\Hx$.
  By definition of $T_{x,u}\mu$, the right-hand side of
  \eqref{eq:varpi-sup} equals
  \begin{equation}
    \label{eq:int}
    \mu_{u_x}\bigl([\langle x,u_x\rangle, \langle x,u_x\rangle+r]\bigr)
    = \int_0^r T_{x+tu_x,u_x}\mu\,\mathrm dt.
  \end{equation}
  By the uniform continuity of $T_{x,u}\mu$, 
  \begin{equation}
    \label{eq:varpi-limsup}
    \sup_{x\in\sG(\alpha)}
    \Bigl(\frac{\varpi_+(x,r)}r-T_{x,u_x}\mu\Bigr)\le
    \sup_{x\in\sG(\alpha)}\frac 1r\int_0^r
    \bigl(T_{x+tu_x,u_x}\mu-T_{x,u_x}\mu\bigr)\,\mathrm dt\to0
    \quad \text{as}\; r\downarrow0.
  \end{equation}    
  On the other hand, 
  \begin{align*}
    \varpi_+(x,r)
    &=\sup_{y\in B_r(x)}\bigl(\mu(y+\Hx[y])-\mu(x+\Hx)\bigr)\\
    &\ge\sup_{y\in B_r(x)}\bigl(\mu(y+\Hx)-\mu(x+\Hx)\bigr)
    +\inf_{y\in B_r(x)}\bigl(\mu(y+\Hx[y])-\mu(y+\Hx)\bigr).
  \end{align*}
  Define $W_x(y)=\mu(y+\Hx)$ and 
  \begin{equation}
    \label{eq:W,Delta}
    \Delta_x(y)=\mu(y+\Hx[y])-\mu(y+\Hx)
    =D(y)-W_x(y).
  \end{equation}
  Taking into account the last equality in \eqref{eq:varpi-sup} and the
  representation of its right-hand side in the form of \eqref{eq:int},
  we have
  \begin{multline}
    \label{eq:varpi-liminf}
    \inf_{x\in\sG(\alpha)}
    \Bigl(\frac{\varpi_+(x,r)}r-T_{x,u_x}\mu\Bigr)\ge
    \inf_{x\in\sG(\alpha)}\frac 1r\int_0^r
    \bigl(T_{x+tu_x,u_x}\mu-T_{x,u_x}\mu\bigr)\,\mathrm dt\\
    +\inf_{x\in\sG(\alpha)}\frac 1r
    \inf_{y\in B_r(x)}\Delta_x(y),
  \end{multline}
  where the first summand on the right-hand side vanishes as
  $r\downarrow 0$ for the same reasons as the supremum in
  \eqref{eq:varpi-limsup}.
  
  By (C3), $D$ is differentiable. Moreover, by (C4),
  \begin{equation}
    \label{eq:gradW}
    \grad W_x(y)=\bigl(T_{y,u_x}\mu\bigr)\; u_x, 
  \end{equation}
  which can be checked straightforwardly by considering partial
  derivatives in an orthogonal basis containing $u_x$. So, by
  \eqref{eq:W,Delta}, $\Delta_x(y)$ is differentiable in $y$.
  
  Take some $y^\ast_{x,r}\in\arg\min_{y\in B_r(x)}\Delta_x(y)$. Since
  $\Delta_x(x)=0$, 
  \begin{equation*}
    \bigl|\inf{\!}_{y\in B_r(x)}\Delta_x(y)\bigr|
    =\bigl|\Delta_x(y^\ast_{x,r})-\Delta_x(x)\bigr|=
    \bigl|\langle\grad\Delta_x(z_{x,r}),y^\ast_{x,r}-x\rangle\bigr|
  \end{equation*}
  for some $z_{x,r}$, lying on the segment joining $x$ and
  $y^\ast_{x,r}$, due to the mean value theorem. Hence,
  \begin{equation}
    \label{eq:inf}
    \frac 1r\,\bigl|\inf{\!}_{y\in B_r(x)}\Delta_x(y)\bigr|\le\frac{\|y^\ast_{x,r}-x\|}r\;
    \bigl\|\grad\Delta_x(z_{x,r})\bigr\|\le\bigl\|\grad\Delta_x(z_{x,r})\bigr\|.
  \end{equation}
  Note that $\Delta_x(y)\le0=\Delta_x(x)$ by minimality of
  $\Hx[y]$ at $y$. Hence, $\Delta_x(y)$ has a local maximum at $y=x$,
  and so, $\grad\Delta_x(x)=0$. Thus, by \eqref{eq:inf} and \eqref{eq:W,Delta},
  \begin{align*}
    \frac 1r\,\bigl|\inf{\!}_{y\in B_r(x)}\Delta_x(y)\bigr|&\le
    \bigl\|\grad\Delta_x(z_{x,r})-
    \grad\Delta_x(x)\bigr\|\\&\le
    \bigl\|\grad D(z_{x,r})-
    \grad D(x)\bigr\|+\bigl\|\grad W_x(z_{x,r})-\grad W_x(x)\bigr\|.
  \end{align*}
  Therefore, by \eqref{eq:gradW},
  \begin{align*}
    \bigl|\!\inf_{x\in\sG(\alpha)}\frac 1r\inf{\!}_{y\in
    B_r(x)}\Delta_x(y)\bigr|
    &=\sup_{x\in\sG(\alpha)}\frac 1r\,\bigl|\inf{\!}_{y\in B_r(x)}\Delta_x(y)\bigr|\\
    &\le\sup_{x\in\sG(\alpha)}\bigl\|\grad D(z_{x,r})-
    \grad D(x)\bigr\|+
    \sup_{x\in\sG(\alpha)}\bigl|T_{z_{x,r},u_x}\mu-T_{x,u_x}\mu\bigr|,
  \end{align*}
  which vanishes as $r\downarrow 0$ due to the uniform continuity of
  $\grad D(x)$ and $T_{x,u}\mu$ in $\sG(\alpha)$ and
  $\sG(\alpha)\times\Sphere$, respectively.
  
  This proves that the second summand on the right-hand side of
  \eqref{eq:varpi-liminf} vanishes as $r\downarrow0$.  
  The first claim now follows from \eqref{eq:varpi-limsup} and
  \eqref{eq:varpi-liminf}. The second claim is a consequence of
  \eqref{eq:W,Delta}, \eqref{eq:gradW} and $\grad\Delta_x(x)=0$. 
\end{proof}

Applying Theorem 3.1 from \cite{mol98} in our setting, we arrive at
the following result.

\begin{theorem}
  \label{th:mol}
  Under (C3), (C4), and for $\alpha\in(0,\alpha_{\max})$, the functional $\Phi$
  is continuously differentiable and
  \begin{equation}
    \label{eq:Phi'=}
    \Phi'(\phi)=\sup_{x\in\partial\sR(\alpha)}\frac{|\phi(x)|}{T_{x,u_x}\mu},
  \end{equation}  
  where the outer normal $u_x$ to the minimal halfspace at $x$ is
  defined by \eqref{eq:outnorm}.
\end{theorem}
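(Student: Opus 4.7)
\emph{Approach.} The plan is simply to invoke Theorem~3.1 of \cite{mol98}, which gives the derivative of the Hausdorff distance between a level set and its perturbation in terms of the uniform asymptotic slopes $\varpi_+(x,r)/r$ and $\varpi_-(x,r)/r$ as $r\downarrow 0$, over a neighbourhood of the boundary of the unperturbed level set. All of the hypotheses required for that theorem have been packaged into the previous two sections of the excerpt, so the proof is essentially a verification and a translation between conventions.

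\emph{Verification of the hypotheses.} First, by Remark~\ref{rem:min}, condition (C4) implies (C1) and (C2), so $D$ is continuous, $\partial\sR(\alpha)=\{x\colon D(x)=\alpha\}$ is a compact set by Lemma~\ref{lem:level}(i), and $\alpha\mapsto\sR(\alpha)$ is continuous in the Hausdorff metric. Second, Lemma~\ref{lem:varpi} supplies exactly the uniform differentiability of $\varpi_\pm$ on the neighbourhood $\sG(\alpha)$ that \cite{mol98} requires, with respective derivatives $T_{x,u_x}\mu$ and $-T_{x,u_x}\mu$. Finally, again by Remark~\ref{rem:min}, the map $x\mapsto T_{x,u_x}\mu$ is continuous and strictly positive on the compact set $\partial\sR(\alpha)$, hence uniformly bounded below by a positive constant. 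This is the nondegeneracy condition which guarantees that the derivative formula is well defined and that the continuous differentiability in the sense of \eqref{eq:Phi'} actually holds.

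\emph{Sign convention and expected obstacle.} Since the formulation in \cite{mol98} uses level sets defined by the opposite inequality $D(x)\le\alpha$, I would apply that result to $-D$ with level $-\alpha$ and perturbation $-\phi$, noting that $\sR(\alpha)=\{-D\le-\alpha\}$ and $\sR(\alpha;\phi)=\{-D\le-\alpha-\phi\}$. Under this reflection the roles of $\varpi_+$ and $\varpi_-$ are swapped, but the common absolute value of their derivatives at $r=0$ remains $T_{x,u_x}\mu$, and $|-\phi(x)|=|\phi(x)|$, so the output formula \eqref{eq:Phi'=} is unaffected. The one place where care is required is matching the precise regularity called for in \cite{mol98} with what Lemma~\ref{lem:varpi} actually delivers on the neighbourhood $\sG(\alpha)$ rather than on $\partial\sR(\alpha)$ alone; this is precisely why the previous lemma was phrased with uniform convergence over a neighbourhood, and once that correspondence is checked the remainder of the argument is bookkeeping.
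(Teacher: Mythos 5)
Your proposal is correct and follows essentially the same route as the paper: the paper's proof consists precisely of applying Theorem~3.1 of \cite{mol98}, with Lemma~\ref{lem:varpi} supplying the required uniform differentiability of $\varpi_\pm$ on a neighbourhood of $\partial\sR(\alpha)$ and Remark~\ref{rem:min} giving continuity and positivity of $T_{x,u_x}\mu$. Your explicit handling of the opposite inequality-sign convention in \cite{mol98} (via the reflection $D\mapsto -D$, which swaps $\varpi_+$ and $\varpi_-$ but leaves \eqref{eq:Phi'=} unchanged) is a correct spelling-out of a point the paper only mentions in passing.
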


We will also need the following property of the Hausdorff distance.

\begin{lemma}
  \label{lemma:two-phi}
  Let $\phi=\phi_+-\phi_-$ be the decomposition of $\phi$ into its
  positive and negative parts. Then,
  \begin{equation*}
    \rho_H(\sR(\alpha;\phi),\sR(\alpha))
    = \max\Bigl\{\rho_H\bigl(\sR(\alpha;\phi_+),\sR(\alpha)\bigr),
    \rho_H\bigl(\sR(\alpha;\phi_-),\sR(\alpha)\bigr)\Bigr\}.
  \end{equation*}
\end{lemma}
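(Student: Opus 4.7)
The approach is to reduce the Hausdorff identity to two set-theoretic identities linking the three perturbed regions. Since $\phi=\phi_+-\phi_-$ with $\phi_\pm\ge 0$, at every point $x$ one has
\[
\alpha+\phi_+(x)=\max\{\alpha,\alpha+\phi(x)\},\qquad
\alpha-\phi_-(x)=\min\{\alpha,\alpha+\phi(x)\}.
\]
Passing to the level sets of $D$ against these thresholds, this gives
\[
\sR(\alpha;\phi_+)=\sR(\alpha)\cap\sR(\alpha;\phi),\qquad
\sR(\alpha;\phi_-)=\sR(\alpha)\cup\sR(\alpha;\phi),
\]
where $\sR(\alpha;\phi_-)$ is to be read with the negative sign of the decomposition applied to the threshold. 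Modulo a routine check that the closures in the definition of $\sR(\cdot\,;\cdot)$ do not spoil these identities (which follows from continuity of $D$ under (C1)), we obtain the sandwich $\sR(\alpha;\phi_+)\subseteq\sR(\alpha)\subseteq\sR(\alpha;\phi_-)$.

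Setting $A=\sR(\alpha;\phi)$ and $B=\sR(\alpha)$, the Hausdorff distance for nested pairs collapses each one-sided quantity to a single supremum:
\[
\rho_H(\sR(\alpha;\phi_-),B)=\sup_{x\in A}d(x,B),\qquad
\rho_H(\sR(\alpha;\phi_+),B)=\sup_{y\in B}d(y,A\cap B).
\]
Next I would expand $\rho_H(A,B)=\max\{\sup_{x\in A}d(x,B),\ \sup_{y\in B}d(y,A)\}$. The first summand matches $\rho_H(\sR(\alpha;\phi_-),B)$ verbatim, so the whole identity reduces to
\[
\sup_{y\in B}d(y,A)=\sup_{y\in B}d(y,A\cap B).
\]
The direction $\le$ is automatic from $A\cap B\subseteq A$. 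For the reverse, given $y\in B$ I would pick a minimizing sequence $a_n\in A$ with $|y-a_n|\to d(y,A)$ and argue that a convergent subsequence has its limit $a^\ast$ inside $A\cap B$: either infinitely many $a_n$ already lie in $B$, so $a^\ast\in A\cap B$ by closedness; or eventually $D(a_n)<\alpha$, and continuity of $D$ forces $D(a^\ast)\le\alpha$, which combined with $a^\ast\in A$ (so $D(a^\ast)\ge\alpha+\phi(a^\ast)$) pins $a^\ast$ to $\partial B$ with $\phi(a^\ast)\le 0$, again placing $a^\ast\in A\cap B$.

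The step I expect to be the main obstacle is the last one: in general the accumulation point $a^\ast$ might satisfy $D(a^\ast)<\alpha$ strictly, so the pinning to $\partial B$ is not automatic, and $A$ could in principle have a component in $\R^d\setminus B$ strictly closer to $y$ than anything in $A\cap B$. Making the argument rigorous should exploit the convexity of $B=\sR(\alpha)$ (established earlier in the paper) together with continuity of $\phi$: the straight segment from $y$ to the candidate nearest point in $A\setminus B$ must cross $\partial B$ at some $p$ nearer to $y$, and a sign analysis of $\phi$ along the segment produces a point of $A\cap B$ no farther from $y$ than the original candidate. Once this is in place, matching the two suprema completes the proof.
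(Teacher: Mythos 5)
Your reduction is right as far as it goes: modulo the sign convention for $\phi_-$ and some closure bookkeeping, $\sR(\alpha;\phi_+)=\sR(\alpha)\cap\sR(\alpha;\phi)$ and $\sR(\alpha;\phi_-)=\sR(\alpha)\cup\sR(\alpha;\phi)$, the union half of the identity is exact, and the lemma reduces to $\sup_{y\in\sR(\alpha)}\rho\bigl(y,\sR(\alpha;\phi)\bigr)=\sup_{y\in\sR(\alpha)}\rho\bigl(y,\sR(\alpha)\cap\sR(\alpha;\phi)\bigr)$, of which only ``$\le$'' is automatic. But the obstacle you flag is not removable by convexity of $\sR(\alpha)$ and a segment argument: the remaining equality is false in general, so no repair along those lines can close the gap. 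Concretely, take $d=1$, let $\mu$ have a continuous strictly positive density with $\sR(\alpha)=[q_\alpha,q_{1-\alpha}]=[0,1]$, and set $\phi(x)=D(x)-\alpha+\rho\bigl(x,\{0,1.1\}\bigr)$, a continuous function. Then $D(x)\ge\alpha+\phi(x)$ iff $\rho(x,\{0,1.1\})\le0$, so $\sR(\alpha;\phi)=\{0,1.1\}$; moreover $\phi\ge0$ on $[0,1]$ and $\phi(1.1)=D(1.1)-\alpha<0$, which gives $\sR(\alpha;\phi_+)=\{0\}$, while $\sR(\alpha;\phi_-)=[0,1]\cup\{1.1\}$ under your convention (and $[0,1]$ under the literal one). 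The left-hand side of the lemma is $\rho_H(\{0,1.1\},[0,1])=0.55$, whereas the right-hand side is $\max\{\rho_H(\{0\},[0,1]),\,0.1\}=1$. This is exactly the scenario you worried about: the nearest point of $\sR(\alpha;\phi)$ to $y=1$ sits just outside $\sR(\alpha)$, and the segment from $y$ to it crosses $\partial\sR(\alpha)$ at a point that has nothing to do with $\sR(\alpha;\phi)$, so no nearby point of $\sR(\alpha)\cap\sR(\alpha;\phi)$ is produced.

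For comparison, the paper's own proof is a single display asserting precisely the two reductions you isolated, in particular the replacement of $\sR(\alpha;\phi)$ by $\sR(\alpha;\phi_+)$ inside $\sup_{x\in\sR(\alpha)}\rho(x,\cdot\,)$, with no further argument; so you have not overlooked a hidden trick, and the example above shows that this step (hence the stated equality) needs extra hypotheses, e.g.\ $\phi$ of constant sign, in which case one of the two sets on the right is $\sR(\alpha;\phi)$ itself and the claim is immediate. What your argument does establish rigorously is the inequality $\rho_H\bigl(\sR(\alpha;\phi),\sR(\alpha)\bigr)\le\max\bigl\{\rho_H\bigl(\sR(\alpha;\phi_+),\sR(\alpha)\bigr),\rho_H\bigl(\sR(\alpha;\phi_-),\sR(\alpha)\bigr)\bigr\}$, and this is the only direction used later: in \eqref{eq:Hbound} in the proof of Theorem~\ref{th:LILH} (and in the Marcinkiewicz--Zygmund corollary) the empirical region is already sandwiched between $\sR(\alpha+\gamma\lambda_n)$ and $\sR(\alpha-\gamma\lambda_n)$, and only the resulting upper bound on the Hausdorff distance is needed. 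So the honest conclusion is that your proposal proves the ``$\le$'' half, the ``$\ge$'' half cannot be proved in the stated generality, and the lemma should either be weakened to the inequality or restricted to sign-definite (or otherwise constrained) perturbations, which costs nothing in the subsequent applications.
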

\begin{proof}
  Let $\rho(x,Y)=\inf_{y\in Y}\rho(x,y)$ stand for the Euclidean
  distance between the point $x$ and the set $Y$.  By the definition
  of the Hausdorff distance, 
  \begin{align*}
    \rho_H\bigl(\sR(\alpha;\phi),\sR(\alpha)\bigr)
    &=\max\Bigl\{\sup_{x\in\sR(\alpha)}\rho(x,\sR(\alpha;\phi_+)),
      \!\!\sup_{x\in\sR(\alpha;\phi_-)}\!\!\rho(x,\sR(\alpha))\Bigr\}\\
    &=\max\Big\{\rho_H(\sR(\alpha;\phi_+),\sR(\alpha)),
    \rho_H(\sR(\alpha;\phi_-),\sR(\alpha))\Big\}. \qedhere
  \end{align*}
\end{proof}

Applying Theorem \ref{th:mol} and Lemma \ref{lemma:two-phi} and
taking into account Remark~\ref{rem:min}, we have the following
result.

\begin{corollary}
  \label{cor:H-r-alpha}
  Under (C3), (C4), and for $\alpha\in(0,\alpha_{\max})$,
  \begin{displaymath}
    \lim_{t\to0}\frac{1}{t}\rho_H\bigl(\sR(\alpha+t),\sR(\alpha)\bigr)
    =\frac{1}{\min_{x\in\partial\sR(\alpha)}T_{x,u_x}\mu}.
  \end{displaymath}
\end{corollary}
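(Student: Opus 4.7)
The plan is to read off the limit from Theorem~\ref{th:mol} by specialising it to constant perturbation functions. First I would observe that for any $t\in\R$, plugging the constant function $\phi\equiv t$ into the definition of the perturbed region yields $\sR(\alpha;\phi)=\sR(\alpha+t)$, so that, writing $\Phi$ for the functional defined in \eqref{eq:Phi},
\[
\rho_H\bigl(\sR(\alpha),\sR(\alpha+t)\bigr)=\Phi(\phi).
\]

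For the right-hand limit $t\downarrow 0$, I would pick any sequence $t_n\downarrow 0$, set $\delta_n=t_n$, and take the constant sequence $\phi_n\equiv 1$, which trivially converges to the continuous function $\phi\equiv 1$ uniformly on compacts. The continuous differentiability of $\Phi$ supplied by Theorem~\ref{th:mol}, applied via the first line of \eqref{eq:Phi'}, then gives
\[
t_n^{-1}\rho_H\bigl(\sR(\alpha),\sR(\alpha+t_n)\bigr)=\delta_n^{-1}\Phi(\delta_n\phi_n)\longrightarrow\Phi'(1).
\]
Evaluating \eqref{eq:Phi'=} at $\phi\equiv 1$ and invoking Remark~\ref{rem:min} to legitimise writing the supremum as $1/\min$ with a positive, attained denominator, I obtain
\[
\Phi'(1)=\sup_{x\in\partial\sR(\alpha)}\frac{1}{T_{x,u_x}\mu}=\frac{1}{\min_{x\in\partial\sR(\alpha)}T_{x,u_x}\mu}.
\]

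For the left-hand limit $t\uparrow 0$, I would run the same argument with $\phi_n\equiv -1$ and $\delta_n=|t_n|\downarrow 0$. Since the right-hand side of \eqref{eq:Phi'=} depends only on $|\phi(x)|$, this yields $\Phi'(-1)=\Phi'(1)$, so the two one-sided limits agree and combine into the claimed two-sided limit. Lemma~\ref{lemma:two-phi} is consistent with this reduction: for the constant function $\phi\equiv t$, one of its positive or negative parts is identically zero, so the maximum in that lemma collapses to the single one-sided Hausdorff distance treated above.

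I do not anticipate a genuine obstacle: Theorem~\ref{th:mol} does essentially all the work, and what remains is only to recognise that a scalar level shift by $t$ corresponds to perturbation by the constant function $\phi\equiv t$ in the framework of \cite{mol98}, and to observe the symmetry in sign already encoded in the absolute value in \eqref{eq:Phi'=}.
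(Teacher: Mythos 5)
Your proposal is correct and matches the paper's own (very brief) argument, which likewise obtains the corollary by specialising Theorem~\ref{th:mol} to constant perturbations $\phi\equiv t$, invoking Remark~\ref{rem:min} for the positive attained minimum, with Lemma~\ref{lemma:two-phi} playing only the collapsing role you describe. The only cosmetic point is that for $t<0$ the statement's factor $1/t$ should be read as $1/|t|$, which is exactly how you treat the left-hand limit.
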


\begin{remark}
  \label{rem:K_0}
  Theorem 3.1 in \cite{mol98} dealt with the localised Hausdorff
  distance given by
  $\rho_H\bigl(\sR(\alpha)\cap K_0,\sR(\alpha;\phi)\cap K_0\bigr)$ for
  a fixed compact set $K_0$. In a smooth setting, we do not need
  such a localisation because of compactness of
  $\partial\sR(\alpha)$. However, if (C3) and (C4) are not satisfied
  globally over a neighbourhood of $\partial\sR(\alpha)$ but hold
  locally in a neighbourhood of $\partial\sR(\alpha)\cap K_0$, and,
  for a suitable class of $\phi_n$, the supremum in the definition of
  $\rho_H\bigl(\sR(\alpha),\sR(\alpha;\delta_n\phi_n)\bigr)$ is
  attained inside $K_0$, then $\sup_{x\in\partial\sR(\alpha)}$ in
  \eqref{eq:Phi'=} can be replaced by the supremum over
  $x\in\partial\sR(\alpha)\cap K_0$. This is a typical case, e.g., for
  uniform distributions in polytopes, see Example~\ref{subsec:square}.
\end{remark}

\section{Strong limit theorems in the Hausdorff distance}
\label{sec:strong-limit-theor-Hausdorff}

Since $\emp(\alpha)$ is a random closed set, the Hausdorff distance
$\rho_H(\sR(\alpha),\emp(\alpha))$ is a random variable, see
Theorem~1.3.25(vi) in \cite{mol17}. The next result follows from
Theorem~\ref{th:MZSLLN} for $p=1$ and continuity of $\sR(\alpha)$ in
the Hausdorff metric implied by (C1) and (C2).

\begin{corollary}
  \label{cor:SLLN}
  Under (C1) and (C2), for each $\alpha\in(0,\alpha_{\max})$,
  \begin{equation}
    \label{eq:SLLN}
    \rho_H\bigl(\sR(\alpha),\emp(\alpha)\bigr)\to0\quad \text{
      a.s. as}\; n\to\infty. 
  \end{equation}
\end{corollary}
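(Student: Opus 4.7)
The plan is to combine the inclusion-form strong law of large numbers from Theorem~\ref{th:MZSLLN} (specialised to $p=1$) with the continuity of $\alpha\mapsto\sR(\alpha)$ in the Hausdorff metric, which is available under (C1) and (C2) on the interval $(0,\alpha_{\max})$. Since $\E\xi_1=1$ forces $\E|\xi_1|<\infty$, the hypothesis of Theorem~\ref{th:MZSLLN} with $p=1$ is satisfied, and in this case $n^{(1-p)/p}=1$, so for every $\eps>0$ we get
\begin{displaymath}
  \sR(\alpha+\eps)\subset\emp(\alpha)\subset\sR(\alpha-\eps)
\end{displaymath}
almost surely for all sufficiently large $n$ (with $\eps$ chosen small enough that both $\alpha\pm\eps$ lie in $(0,\alpha_{\max})$).

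Next I would invoke the elementary Hausdorff sandwich principle: if two sets $A,B$ both lie between $X\subset Y$, then $\rho_H(A,B)\le\rho_H(X,Y)$ (each point of $A$ lies in $Y$ and has distance at most $\rho_H(X,Y)$ to $X\subset B$, and symmetrically for $B$). By nestedness of the theoretical depth trimmed regions, $\sR(\alpha)$ itself sits between $\sR(\alpha+\eps)$ and $\sR(\alpha-\eps)$, so applied to $A=\sR(\alpha)$, $B=\emp(\alpha)$ on the event on which the above sandwich holds,
\begin{displaymath}
  \rho_H\bigl(\sR(\alpha),\emp(\alpha)\bigr)
  \le\rho_H\bigl(\sR(\alpha+\eps),\sR(\alpha-\eps)\bigr).
\end{displaymath}

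Finally, under (C1) and (C2), Theorem~1 of \cite{nag:sch:wer18} guarantees that $\alpha\mapsto\sR(\alpha)$ is continuous in the Hausdorff metric on $(0,\alpha_{\max})$, so the right-hand side tends to $0$ as $\eps\downarrow0$. Combining, $\limsup_{n\to\infty}\rho_H(\sR(\alpha),\emp(\alpha))\le\rho_H(\sR(\alpha+\eps),\sR(\alpha-\eps))$ almost surely for every admissible $\eps$, and sending $\eps\downarrow0$ along a countable sequence gives \eqref{eq:SLLN}. There is no real obstacle here: all of the work has been done in Theorem~\ref{th:MZSLLN} and in the cited Hausdorff-continuity result; the only delicate point is to pick $\eps$ small enough that the outer parameters stay inside the interval on which continuity of $\sR(\cdot)$ is guaranteed.
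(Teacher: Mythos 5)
Your argument is correct and is essentially the paper's own proof: the paper derives the corollary directly from Theorem~\ref{th:MZSLLN} with $p=1$ together with the Hausdorff continuity of $\alpha\mapsto\sR(\alpha)$ under (C1) and (C2). Your only additions are the (valid) sandwich estimate $\rho_H(\sR(\alpha),\emp(\alpha))\le\rho_H(\sR(\alpha+\eps),\sR(\alpha-\eps))$ and the care taken to keep $\alpha\pm\eps$ inside $(0,\alpha_{\max})$, which merely make explicit what the paper leaves implicit.
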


Note that, for $\xi_1=1$ a.s., a uniform variant of \eqref{eq:SLLN}
(for $\alpha$ from a compact interval in $(0,\alpha_{\max})$) is given
in \cite{dyc17}, Theorem~4.5 and Example~4.2.

\begin{corollary}
  Assume that (C3), (C4) hold for $\alpha\in(0,\alpha_{\max})$, and
  $\E|\xi_1|^p<\infty$ for $p\in[1,2)$.  Then, 
  \begin{displaymath}
    n^{\frac{p-1}p}\rho_H\bigl(\sR(\alpha),\emp(\alpha)\bigr)\to
    0\quad \text{ a.s. as}\; n\to\infty.
  \end{displaymath}
\end{corollary}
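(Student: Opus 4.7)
The plan is to deduce this Hausdorff rate directly from the Marcinkiewicz--Zygmund set-inclusion bound of Theorem~\ref{th:MZSLLN} combined with the infinitesimal Hausdorff Lipschitz continuity of the map $\alpha\mapsto\sR(\alpha)$ established in Corollary~\ref{cor:H-r-alpha}, for which conditions (C3) and (C4) are exactly what is needed.

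Fix $\epsilon>0$ and set $\epsilon_n=\epsilon n^{(1-p)/p}$. Theorem~\ref{th:MZSLLN} gives, almost surely for all sufficiently large $n$,
\[
\sR(\alpha+\epsilon_n)\subset\emp(\alpha)\subset\sR(\alpha-\epsilon_n),
\]
and by the nestedness of the depth trimmed regions the same pair of inclusions holds with $\emp(\alpha)$ replaced by $\sR(\alpha)$. I would then use the elementary fact that if two sets both lie between the same pair $A_1\subset A_2$, then their Hausdorff distance is at most $\rho_H(A_1,A_2)$: applied here with $A_1=\sR(\alpha+\epsilon_n)$ and $A_2=\sR(\alpha-\epsilon_n)$, this yields
\[
\rho_H\bigl(\sR(\alpha),\emp(\alpha)\bigr)\le\rho_H\bigl(\sR(\alpha-\epsilon_n),\sR(\alpha+\epsilon_n)\bigr)\le(2c+o(1))\epsilon_n,
\]
where $c=1/\min_{x\in\partial\sR(\alpha)}T_{x,u_x}\mu$ is finite and positive by Remark~\ref{rem:min}, and the last inequality follows from Corollary~\ref{cor:H-r-alpha} together with the triangle inequality.

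Multiplying through by $n^{(p-1)/p}$ gives, almost surely for all large $n$,
\[
n^{(p-1)/p}\rho_H\bigl(\sR(\alpha),\emp(\alpha)\bigr)\le 3c\,\epsilon.
\]
Intersecting the corresponding almost-sure events along a countable sequence $\epsilon_k\downarrow 0$ then yields $\limsup_{n\to\infty} n^{(p-1)/p}\rho_H(\sR(\alpha),\emp(\alpha))=0$ almost surely, which is the claim (reading the final display as convergence to zero). The main obstacle is purely logistic, namely the quantifier exchange between ``for all $\epsilon>0$'' and ``a.s.\ for large $n$,'' resolved by restricting $\epsilon$ to a countable dense subset of $(0,\infty)$; the geometric content is entirely packaged by the two preceding results.
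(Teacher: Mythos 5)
Your proposal is correct and follows essentially the same route as the paper: sandwich $\emp(\alpha)$ (and, by nestedness, $\sR(\alpha)$) between $\sR(\alpha\pm\eps n^{(1-p)/p})$ via Theorem~\ref{th:MZSLLN}, control the Hausdorff distance between these two regions by Corollary~\ref{cor:H-r-alpha}, and let $\eps\downarrow0$. The only cosmetic difference is that you bound the two-sided distance by the triangle inequality (losing a harmless factor), where the paper invokes Lemma~\ref{lemma:two-phi} to get the exact limit $\eps/\min_{x\in\partial\sR(\alpha)}T_{x,u_x}\mu$; your explicit treatment of the countable-$\eps$ quantifier exchange and of the elementary fact that sets squeezed between a nested pair are within $\rho_H$ of that pair is fine and matches what the paper leaves implicit.
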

\begin{proof}
  Fix any $\eps>0$. By Lemma~\ref{lemma:two-phi} and
  Corollary~\ref{cor:H-r-alpha},
  \begin{displaymath}
    \lim_{n\to\infty}n^{\frac{p-1}p}\rho_H
    \Bigl(\sR\Bigl(\alpha-n^{\frac{1-p}p}\eps\Bigr),
    \sR\Bigl(\alpha+n^{\frac{1-p}p}\eps\Bigr)\Bigr)
    =\frac{\eps}{\min_{x\in\partial\sR(\alpha)}T_{x,u_x}\mu}.
  \end{displaymath}
  Hence, by Theorem \ref{th:MZSLLN},
  \begin{displaymath}
    \limsup_{n\to\infty}n^{\frac{p-1}p}\rho_H
    \bigl(\sR(\alpha),\emp(\alpha)\bigr)\le\frac{\eps}
    {\min_{x\in\partial\sR(\alpha)}T_{x,u_x}\mu}.
  \end{displaymath}
  The result follows by letting $\eps\downarrow0$.
\end{proof}

The next theorem establishes the law of the iterated logarithm for
$\emp(\alpha)$ in terms of the Hausdorff distance. We maintain
notation and assumptions from Section~\ref{sec:LIL}. 

\begin{theorem}
  \label{th:LILH}
  Assume that (C3), (C4) hold for $\alpha\in(0,\alpha_{\max})$. Then
  the following statements hold almost surely:
  \begin{gather}
    \label{eq:lil}
    \limsup_{n\to\infty}\lambda_n^{-1}\rho_H\bigl(\sR(\alpha),\emp(\alpha)\bigr)
    =\frac{\sqrt{M\alpha-\alpha^2}}
    {\min_{x\in\partial\sR(\alpha)}T_{x,u_x}\mu},\\
    \liminf_{n\to\infty}\lambda_n^{-1}
    \rho_H\bigl(\sR(\alpha),\emp(\alpha)\bigr)=0. \notag
  \end{gather}
\end{theorem}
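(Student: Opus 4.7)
The plan is to combine the inclusion-sense LIL of Theorem~\ref{th:LIL} with the deterministic Hausdorff asymptotics of Corollary~\ref{cor:H-r-alpha}. Denote $\tau=\min_{x\in\partial\sR(\alpha)}T_{x,u_x}\mu$ for brevity; this minimum is attained by compactness of $\partial\sR(\alpha)$ and the continuity of $x\mapsto T_{x,u_x}\mu$ from Remark~\ref{rem:min}. The key observation is that whenever the sandwich $\sR(\alpha+\gamma\lambda_n)\subset\emp(\alpha)\subset\sR(\alpha-\gamma\lambda_n)$ holds, the nesting of depth trimmed regions yields
\[
\rho_H(\sR(\alpha),\emp(\alpha))\le\max\bigl\{\rho_H(\sR(\alpha),\sR(\alpha+\gamma\lambda_n)),\;\rho_H(\sR(\alpha-\gamma\lambda_n),\sR(\alpha))\bigr\},
\]
and by Corollary~\ref{cor:H-r-alpha} each term on the right equals $\gamma\lambda_n/\tau+o(\lambda_n)$.

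For the upper bound in \eqref{eq:lil}, fix $\gamma>\sqrt{M\alpha-\alpha^2}$, apply Theorem~\ref{th:LIL}(i), use the display above, and let $\gamma\downarrow\sqrt{M\alpha-\alpha^2}$. For the $\liminf=0$ claim, apply the same upper-bound computation along the subsequence $n'''_k$ from Theorem~\ref{th:LIL}(iii) to obtain $\liminf\rho_H/\lambda_n\le\gamma/\tau$ for every $\gamma>0$; letting $\gamma\downarrow 0$ and using $\rho_H\ge 0$ closes the second assertion.

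For the reverse inequality in \eqref{eq:lil}, pick $x^*\in\partial\sR(\alpha)$ with $T_{x^*,u_{x^*}}\mu=\tau$; the minimal halfspace at $x^*$ is unique by (C3). Given $\gamma<\sqrt{M\alpha-\alpha^2}$, repeat the construction of the right-hand non-inclusion in the proof of Theorem~\ref{th:LIL}(ii) with this specific choice $x=x^*$: it produces a subsequence $n''_k\to\infty$, points $x_k\in\partial\sR(\alpha-\gamma\lambda_{n''_k})$ with $x_k\to x^*$, and witnesses $\tilde x_k$ from arbitrarily small neighbourhoods of $x_k$ satisfying $\tilde x_k\in\emp[{n''_k}](\alpha)\setminus\sR(\alpha-\gamma\lambda_{n''_k})$, so in particular $\tilde x_k\to x^*$. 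Let $p_k$ be the metric projection of $\tilde x_k$ onto $\sR(\alpha)$; then $p_k\to x^*$, and since $\tilde x_k-p_k$ is antiparallel to $u_{p_k}$ while $\grad D(p_k)=T_{p_k,u_{p_k}}\mu\cdot u_{p_k}$ (an identity implicit in the proof of Lemma~\ref{lem:varpi}), Taylor's theorem gives
\[
\alpha-D(\tilde x_k)=T_{p_k,u_{p_k}}\mu\cdot\|\tilde x_k-p_k\|+o(\|\tilde x_k-p_k\|).
\]
Combined with $D(\tilde x_k)<\alpha-\gamma\lambda_{n''_k}$ and $T_{p_k,u_{p_k}}\mu\to\tau$ (by continuity of $u_x$ and $T_{x,u_x}\mu$), this yields $\rho(\tilde x_k,\sR(\alpha))/\lambda_{n''_k}\to\gamma/\tau$ along the subsequence, whence $\limsup\rho_H/\lambda_n\ge\gamma/\tau$; letting $\gamma\uparrow\sqrt{M\alpha-\alpha^2}$ closes the gap.

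The principal obstacle is the reverse inequality. One must open up the proof of Theorem~\ref{th:LIL}(ii) to harvest not merely the bare non-inclusion but also the convergence of the witnesses $\tilde x_k$ to the specific boundary point $x^*$ where $T_{x,u_x}\mu$ is minimised, and then convert the depth deficit $\alpha-D(\tilde x_k)>\gamma\lambda_{n''_k}$ into Euclidean distance with the exact constant $1/\tau$; the upper bound and the $\liminf$ assertion, by contrast, reduce cleanly to Theorem~\ref{th:LIL}(i),(iii) via the display above.
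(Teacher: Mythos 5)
Your proposal is correct and follows essentially the same route as the paper: the upper bound and the $\liminf=0$ claim are derived, exactly as in the paper, from Theorem~\ref{th:LIL}(i) and (iii) combined with Lemma~\ref{lemma:two-phi} and Corollary~\ref{cor:H-r-alpha}, and the lower bound localises the witness construction from the proof of Theorem~\ref{th:LIL}(ii) at a point of $\arg\min_{x\in\partial\sR(\alpha)}T_{x,u_x}\mu$, which is precisely what the paper does. The only (harmless) difference is the final conversion of the depth deficit into Euclidean distance: the paper places the points $x_k$ and $\tilde x_k$ on the normal ray through the minimising point and invokes the inverse of $\varpi_-$ via Lemma~\ref{lem:varpi}, whereas you project the witnesses onto $\sR(\alpha)$ and use the gradient identity $\grad D(x)=\bigl(T_{x,u_x}\mu\bigr)u_x$, which is exactly the first-order information underlying Lemma~\ref{lem:varpi}.
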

\begin{proof}
  By Theorem~\ref{th:LIL}(i) and Lemma~\ref{lemma:two-phi},
  \begin{equation}
    \label{eq:Hbound}
    \rho_H\bigl(\sR(\alpha),\emp(\alpha)\bigr)
    \leq \max\bigl\{\rho_H\bigl(\sR(\alpha),\sR(\alpha+\gamma\lambda_n)\bigr),
      \rho_H\bigl(\sR(\alpha),\sR(\alpha-\gamma\lambda_n)\bigr)\bigr\}
  \end{equation}
  for any $\gamma>\sqrt{M\alpha-\alpha^2}$ and all sufficiently large
  $n$. By Corollary~\ref{cor:H-r-alpha}, 
  \begin{displaymath}
    \limsup_{n\to\infty}\lambda_n^{-1}
    \rho_H\bigl(\sR(\alpha),\emp(\alpha)\bigr)\le
    \frac{\gamma}{\min_{x\in\partial\sR(\alpha)}T_{x,u_x}\mu}\quad \text{ a.s.}
  \end{displaymath}
  Letting $\gamma\downarrow\sqrt{M\alpha-\alpha^2}$ yields the upper
  bound in the first claim.

  To confirm the lower bound, fix some $\gamma<\sqrt{M\alpha-\alpha^2}$
  and let $x\in\partial\sR(\alpha)$ be an arbitrary point from
  $\arg\min_{x\in\partial\sR(\alpha)}T_{x,u_x}\mu$. By (C3), there is a
  unique minimal halfspace $H$ at $x$ with the outer normal $u_x$.
  As in the proof of the
  right-hand non-inclusion in (ii) of Theorem~\ref{th:LIL}, we may
  choose a sequence $n''_k\to\infty$ and arbitrary points
  $x_k\in\partial\sR(\alpha-\gamma\lambda_{n''_k})$ such that
  $x_k\to x$ and
  $x_k\in\partial\sR(\alpha-\gamma\lambda_{n''_k})\cap\emp[n''_k](\alpha)$
  for large $k$. However, this time we choose the points $x_k$ in a more
  specific way, namely, as intersections of $\ell=\{x+tu_x, t>0\}$ with
  $\partial\sR(\alpha-\gamma\lambda_{n''_k})$.  Slightly moving $x_k$
  along $\ell$ as in the aforementioned proof, we obtain a sequence
  of points $\tilde x_k\in\ell$ such that
  \begin{displaymath}
    \tilde
    x_k\in\emp[n''_k](\alpha)\setminus\sR(\alpha-\gamma\lambda_{n''_k})
  \end{displaymath}
  for all sufficiently large $k$.

  By the definition of the Hausdorff metric and due to
  the fact that $\ell$ is orthogonal to $\partial H$,
  \begin{equation}
    \label{eq:rhoHrho}
    \rho_H\bigl(\emp[n''_k](\alpha),\sR(\alpha)\bigr)\ge
    \rho\bigl(\tilde x_k,\sR(\alpha)\bigr)\ge \rho(x_k,x).
  \end{equation}
  Since $D(x_k)-D(x)=-\gamma\lambda_{n''_k}$, we have
  $\varpi_-\bigl(x,\rho(x_k,x)\bigr) \le-\gamma\lambda_{n''_k}$, where
  $\varpi_-$ is defined in \eqref{eq:varpi_-}.  As $\varpi_-(x,\cdot)$
  strictly decreases for small $r$, there exists
  $\varpi_-^{\leftarrow}(x,\cdot)$, an inverse function in $r$, which also
  decreases in $r$. Hence,
  $\rho(x_k,x)\ge\varpi_-^{\leftarrow}(x,-\gamma\lambda_{n''_k})$, and,
  by \eqref{eq:rhoHrho},
  \begin{equation}
    \label{eq:lowbound}
    \begin{aligned}
      \limsup_{n\to\infty}\lambda_n^{-1}\rho_H\bigl(\sR(\alpha),\emp(\alpha)\bigr)
      &\ge\limsup_{k\to\infty}\lambda_{n''_k}^{-1}\,
      \rho_H\bigl(\sR(\alpha),\emp[n''_k](\alpha)\bigr)\\
      &\ge\limsup_{k\to\infty}\lambda_{n''_k}^{-1}\,
      \varpi_-^{\leftarrow}(x,-\gamma\lambda_{n''_k}).
    \end{aligned}
  \end{equation}
  It easily follows from Lemma \ref{lem:varpi} that
  \begin{displaymath}
    \lim_{t\downarrow0}t^{-1}\varpi_-^{\leftarrow}(x,-\gamma t)
    =\frac\gamma{T_{x,u_x}\mu}.
  \end{displaymath}
  Thus, the right-hand side in \eqref{eq:lowbound} equals
  $\gamma/T_{x,u_x}\mu$. In view of
  $x\in\arg\min_{x\in\partial\sR(\alpha)}T_{x,u_x}\mu$, letting
  $\gamma$ grow to $\sqrt{M\alpha-\alpha^2}$ yields the lower bound in
  the first claim.

  The second claim follows from (iii) in Theorem~\ref{th:LIL} in the
  same way as the upper bound of the first claim followed from (i).
\end{proof}

\section{Examples}
\label{sec:ex}

In this section, we provide some examples of applying the law of the
iterated logarithm to specific distributions.

\begin{example}[Univariate distribution]
  On the line, $D(x)$ is the minimum of $\mu((-\infty,x])$ and
  $\mu([x,\infty))$, and the depth trimmed region of a probability
  measure $\mu$ with connected support is the interval
  $[q_\alpha(\mu),q_{1-\alpha}(\mu)]$ between its quantiles.  Then, in
  the non-weighted case, our results follow  from well-known strong limit
  theorems for empirical quantiles, see, e.g.,
  \cite[Theorem~5]{MR0501290}.
\end{example}

\begin{example}[Uniform distribution]
  Assume that $\mu$ is the uniform distribution on a convex body
  $K$. If $K$ is symmetric smooth and strictly convex, then
  Theorem~\ref{thr:uniform} yields that the depth function is
  continuously differentiable. Hence, all strong limit theorems in the
  inclusion and metric sense hold. For instance, Theorem~\ref{th:LILH}
  is applicable with the right-hand side of \eqref{eq:lil} given by
  \begin{displaymath}
    \frac{\sqrt{M\alpha-\alpha^2}\,V_d(K)}
    {\min_{x\in\partial\sR(\alpha)}V_{d-1}(K\cap(x+\partial H_{u_x}))}.
  \end{displaymath}
  For instance, let $\mu$ be the uniform distribution in the unit disk $K$
  centred at the origin.  It is shown in Section~5.6 in
  \cite{rous:rut99} that $\sR(\alpha)=\bigl\{x\in\R^2\colon\|x\|\le
  r(\alpha)\bigr\}$, where $\alpha\le1/2$
  and $r(\alpha)$ is the unique root of the equation
  \begin{displaymath}
    \arcsin r+r\sqrt{1-r^2}=\frac\pi2-\pi\alpha.
  \end{displaymath}
  Hence, for $\alpha\in(0,1/2)$ and $x\in\partial\sR(\alpha)$, we have
  $V_2(K)=\pi$ and
  $V_1(K\cap(x+\partial H_{u_x}))=2\sqrt{1-r^2(\alpha)}$.  Thus,
  Theorem~\ref{th:LILH} holds for $\alpha\in(0,1/2)$ with the
  right-hand side of \eqref{eq:lil} being
  \begin{displaymath}
    \frac\pi2\sqrt{\frac{M\alpha-\alpha^2}{1-r^2(\alpha)}}.
  \end{displaymath}
\end{example}

\begin{example}[Uniform distribution on a square]
  \label{subsec:square}
  Let $\mu$ be the uniform distribution on the unit square
  $K$. Theorem~\ref{thr:uniform} is not applicable since $K$ is
  neither smooth nor strictly convex.  Due to Section~5.4 in
  \cite{rous:rut99},
  \begin{gather*}
    D(x)=2\min\{x_1,1-x_1\}\min\{x_2,1-x_2\},\qquad	x=(x_1,x_2)\in K,\\
    \sR(\alpha)=\bigl\{x\in\R^2\colon
    \min\{x_1,1-x_1\}\min\{x_2,1-x_2\}\ge\alpha/2\bigr\},\qquad\alpha\le1/2.
  \end{gather*}
  The depth function $D$ is not globally differentiable, and
  $\partial\sR(\alpha)$ is not smooth, see
  Figure~\ref{fig:Unif_depth}.  Thus, we cannot apply
  Theorem~\ref{th:LILH} in its original form.

  \begin{figure}[htbp]
  \centering
  \vspace{1pt}
  \includegraphics[scale=0.5]{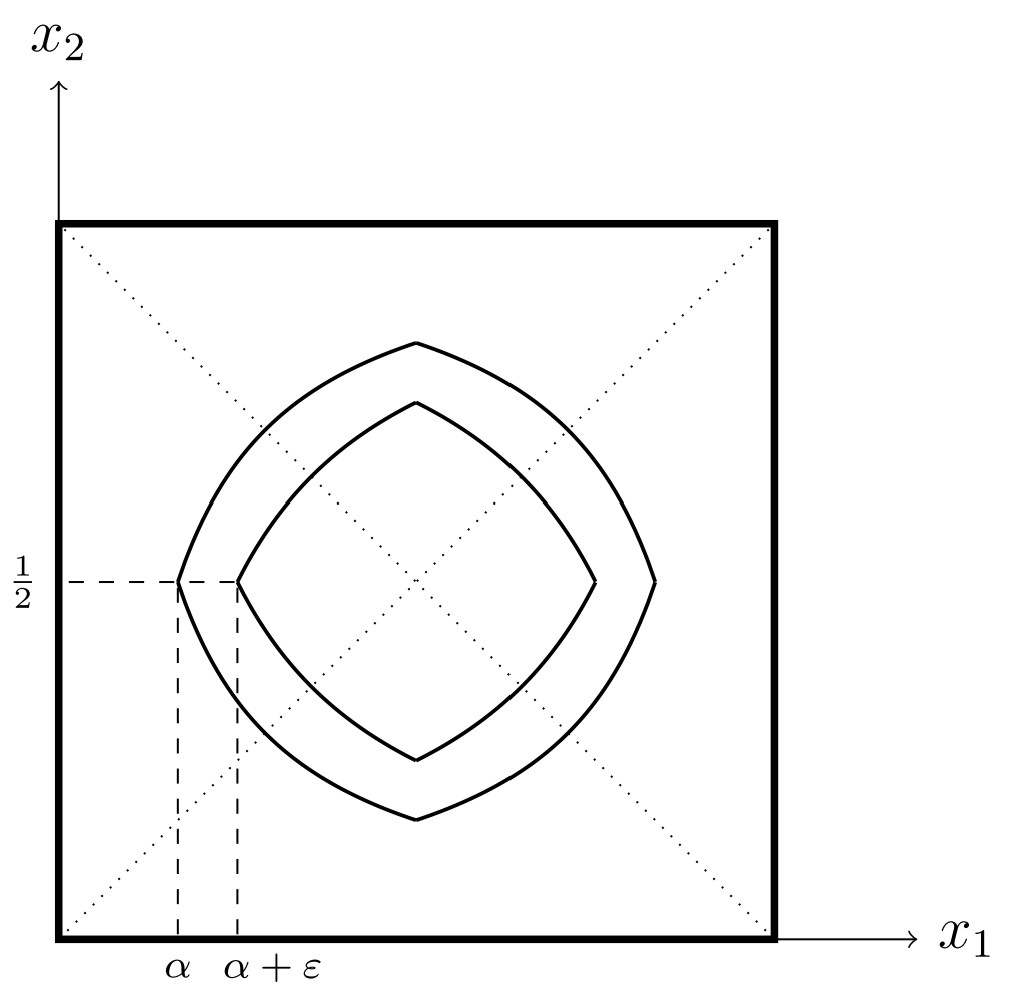}
  \caption{Boundaries of $\sR(\alpha)$ and $\sR(\alpha+\eps)$ for the
    uniform distribution in the unit square. }
  \label{fig:Unif_depth}
  \end{figure}

  It can be easily checked that the supremum in the definition of
  $\rho_H\bigl(\sR(\alpha),\sR(\alpha+\eps)\bigr)$ is
  attained on the diagonals $x_1=x_2$ and $x_1+x_2=1$. Note that, in a small
  neighbourhood $K_0$ of
  $\bigl(\sqrt{\frac\alpha2},\sqrt{\frac\alpha2}\bigr)$, which is the
  intersection point of $\partial\sR(\alpha)$ and $x_1=x_2$,
  conditions (C3) and (C4) hold. Hence, Remark ~\ref{rem:K_0} applies with
  \begin{displaymath}
    \min_{x\in\partial\sR(\alpha)\cap K_0}T_{x,u_x}\mu=2\sqrt\alpha,
  \end{displaymath}
  being the length of the segment tangent to $\partial\sR(\alpha)$ at
  the point $\bigl(\sqrt{\frac\alpha2},\sqrt{\frac\alpha2}\bigr)$.
  Therefore, Theorem~\ref{th:LILH} holds for $\alpha\in(0,1/2)$ with
  $\limsup$ given by $\frac12\sqrt{M-\alpha}$. The same approach can
  be applied for any uniform distribution $\mu$ on a polygon, since,
  in this case, $\partial\sR(\alpha)$ consists of a finite number of
  hyperbolic arcs, see Sections~5.3-5.5 in \cite{rous:rut99}.
\end{example}

\begin{example}[Normal distribution]
  Let $\mu$ be the standard bivariate normal distribution in $\R^2$.
  Due to spherical symmetry of $\mu$, we have
  $D(x)=\frac 12-\Phi_0\bigl(\|x\|\bigr)$, $x\in\R^2$,
  where
  \begin{displaymath}
    \Phi_0(t)=\frac 1{\sqrt{2\pi}}\int_0^t\mathrm e^{-s^2/2}\,\mathrm ds.
  \end{displaymath}
  Hence,
  \begin{displaymath}
    \sR(\alpha)=\bigl\{x\in\R^2\colon\|x\|\le\Phi_0^\leftarrow(1/2-\alpha)\bigr\},
    \quad\alpha\le1/2,
  \end{displaymath}
  is the centred disk with radius given by the inverse of $\Phi_0$,
  see also Section~5.7 in \cite{rous:rut99}.  For
  $\alpha\in(0,1/2)$ and $x\in\partial\sR(\alpha)$, we have
  \begin{displaymath}
    T_{x,u_x}\mu=\biggl.\frac 1{2\pi}\int_{-\infty}^\infty
    \mathrm e^{-\frac{1}{2}(s^2+t^2)}\,\mathrm dt\,\biggr|_{s=\Phi_0^\leftarrow(1/2-\alpha)}=
    \frac 1{\sqrt{2\pi}}\mathrm
    e^{-\frac{1}{2}(\Phi_0^\leftarrow(1/2-\alpha))^2}.
  \end{displaymath}
  Therefore, Theorem~\ref{th:LILH} holds for $\alpha\in(0,1/2)$ with
  the right-hand side of \eqref{eq:lil} being
  \begin{displaymath}
    \sqrt{2\pi(M\alpha-\alpha^2)}\;\mathrm
    e^{\frac{1}{2}(\Phi_0^\leftarrow(1/2-\alpha))^2}.
  \end{displaymath}
\end{example}

\begin{example}[Cauchy distribution]
  Let $\mu$ be the bivariate Cauchy distribution with the density
  \begin{displaymath}
    f(x)=\frac 1{\pi^2}\frac 1{(1+x_1^2)(1+x_2^2)},\qquad
    x=(x_1,x_2)\in\R^2.
  \end{displaymath}
  Due to Section 5.8 in \cite{rous:rut99},
  \begin{gather*}
    D(x)=\frac 12-\frac 1\pi\arctan\max\bigl\{|x_1|,|x_2|\bigr\},\qquad x\in\R^2,\\
    \sR(\alpha)=\bigl\{x\in\R^2\colon
    \max\{|x_1|,|x_2|\}\le\cot\pi\alpha\bigr\},\qquad\alpha\le1/2,
  \end{gather*}
  which is the square with vertices
  $(\pm\cot\pi\alpha,\pm\cot\pi\alpha)$. The Hausdorff distance
  $\rho_H\bigl(\sR(\alpha),\sR(\alpha+\eps)\bigr)$
  equals the distance between the vertices of the corresponding
  squares, where the depth function $D$ is not
  differentiable. Hence, Theorem~\ref{th:LILH} is not applicable even
  with Remark~\ref{rem:K_0} taken into account.  However, these
  vertices are located on fixed diagonals $x_1\pm x_2=0$,
  and so the Hausdorff distance can be calculated explicitly as
  \begin{equation}
    \label{eq:HCauchy}
    \rho_H\bigl(\sR(\alpha),\sR(\alpha+\eps)\bigr)=
    \sqrt 2\,\bigl|\cot\pi(\alpha+\eps)-\cot\pi\alpha\bigr|.
  \end{equation}
  Then \eqref{eq:Hbound} and \eqref{eq:HCauchy} yield
  \begin{multline*}
    \rho_H\bigl(\sR(\alpha),\emp(\alpha)\bigr)
    \le\sqrt 2\max\bigl\{\cot\pi(\alpha-\gamma\lambda_n)-\cot\pi\alpha,
      \cot\pi\alpha-\cot\pi(\alpha+\gamma\lambda_n)\bigr\}\\
      =\frac{\pi\sqrt 2\,\gamma\lambda_n}{\sin^2\pi\alpha}+o(\lambda_n)
      \quad \text{as}\; n\to\infty,
  \end{multline*}
  for any $\gamma>\sqrt{M\alpha-\alpha^2}$. Hence,
  \begin{equation*}
    \limsup_{n\to\infty}\lambda_n^{-1}\rho_H\bigl(\sR(\alpha),\emp(\alpha)\bigr)
    \le\frac{\pi\sqrt{2(M\alpha-\alpha^2)}}{\sin^2\pi\alpha}.
  \end{equation*}

  To prove the reverse inequality, we can not directly follow the
  approach used in the proof of Theorem \ref{th:LILH} since a minimal
  halfspace at a vertex of $\partial\sR(\alpha)$ is not unique. But,
  with some loss of optimality, we can apply the same technique to any
  point $x$ of the boundary which is not a vertex.  In this case,
  \[T_{x,u_x}\mu=\biggl.\frac 1{\pi^2}\int_{-\infty}^\infty
    \frac 1{(1+s^2)(1+t^2)}\,\mathrm dt\,\biggr|_{s=\cot\pi\alpha}=\frac{\sin^2\pi\alpha}{\pi},\]
  which results in a lower bound.
  Therefore, Theorem \ref{th:LILH} holds for $\alpha\in(0,1/2)$ with
  \eqref{eq:lil} replaced by the inequality
  \begin{displaymath}
    \frac{\pi\sqrt{M\alpha-\alpha^2}}{\sin^2\pi\alpha}\le
    \limsup_{n\to\infty}\lambda_n^{-1}\rho_H\bigl(\sR(\alpha),\emp(\alpha)\bigr)\le
    \frac{\pi\sqrt{2(M\alpha-\alpha^2)}}{\sin^2\pi\alpha}.
  \end{displaymath}
  The same approach applies to further symmetric distributions, where
  $\sR(\alpha)$ is also a square, see Example~2 in
  \cite{nag:sch:wer18}.
\end{example}

\section{Appendix: Depth functions of convex bodies}
\label{sec:appendix}

Consider a convex body $K$ in $\R^d$, and let $\mu$ be the uniform
probability distribution on $K$, that is, $\mu$ is the Lebesgue
measure on $K$ normalised by the volume $V_d(K)$ of $K$. By rescaling,
in all subsequent arguments it is possible to let $V_d(K)=1$. The
depth trimmed region of $\mu$ at level $\alpha$ is said to be the
\emph{convex floating body} $K_\alpha$ of $K$, see \cite{schut:wer90}.
Then
\begin{displaymath}
  D(x)=\sup\{\alpha\colon x\in K_\alpha\},\quad x\in K,
\end{displaymath}
is the depth function of $\mu$.

For a convex set $L$ in $\R^d$ denote by $C(L)$ its centroid, that is,
the integral $\int_L x\,\mathrm dx$ (taken with respect to the Lebesgue
measure on the affine subspace generated by $L$) normalised by the
Lebesgue measure of $L$ taken in its affine hull.

For a set $L$ and $r>0$ denote by
\begin{displaymath}
  L^r=\{x\in\R^d\colon\rho(x,L)\leq r\}\quad\text{ and }\quad
  L^{-r}=\{x\in\R^d\colon B_r(x)\subset L\}
\end{displaymath}
the outer and inner $r$-envelopes of $L$. If $L$ is a convex set, then
the Steiner formula says
\begin{displaymath}
  V_d(L^r)=\sum_{i=0}^d r^{d-i} \kappa_{d-i} V_i(L),
\end{displaymath}
where the coefficients $V_i(L)$, $i=0,\dots,d$, are said to be
intrinsic volumes of $L$ and $\kappa_j$ is the volume of the unit
Euclidean ball in $\R^j$, see \cite[Section~4.1]{schn2}. Furthermore,
denote $\|L\|=\sup\{\|x\|\colon x\in L\}$ and
\begin{displaymath}
  m(L)=\sum_{i=0}^d \kappa_{d-i} V_i(L). 
\end{displaymath}
If $E$ is a linear subspace of $\R^d$, denote by $\proj_E x$ the
orthogonal projection of $x$ onto $E$. For each $w,v\in\Sphere$,
denote by $w_{v}$ the normalised projection of $w$ on
$\partial H_{v}$.

\begin{lemma}
  \label{lemma:1}
  Assume that $K$ is a convex body in $\R^d$, and let $\delta>0$ be
  such that $K^{-\delta}$ is not empty. For each pair of 
  $(d-1)$-dimensional affine hyperplanes $E_1$ and $E_2$ which
  intersect $K^{-\delta}$, we have that
  \begin{displaymath}
    \rho_H(K\cap E_1, K\cap E_2)\leq c_1 \rho_H(E_1,E_2)
  \end{displaymath}
  and
  \begin{displaymath}
    \|C(K\cap E_1)-C(K\cap E_2)\|\leq c_2 \rho_H(E_1,E_2)
  \end{displaymath}
  with constants $c_1$ and $c_2$ depending only on $K$ and $\delta$.
\end{lemma}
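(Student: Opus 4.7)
My plan is to prove the slice bound first, by a \emph{retract-then-project} construction that exploits the uniform inner-radius condition $K^{-\delta}\ne\emptyset$, and then reduce the centroid bound to the standard Lipschitz continuity of the centroid of convex bodies whose inner and outer radii are controlled.

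For the slice inequality, I set $r=\rho_H(E_1,E_2)$, read as a localised Hausdorff distance inside a fixed ball $B\supset K$ (so that the two hyperplanes have finite distance even when they are not parallel). The case $r\ge\delta/2$ is trivial: both slices lie in $K\subset B_{\|K\|}(0)$, so $\rho_H(K\cap E_1,K\cap E_2)\le 2\|K\|\le(4\|K\|/\delta)\,r$. For $r<\delta/2$ I fix $x_1\in K^{-\delta}\cap E_1$ and, for arbitrary $y\in K\cap E_1$, retract $y$ toward $x_1$ by setting $y_\lambda=\lambda y+(1-\lambda)x_1$ with $\lambda=1-2r/\delta$. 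Since $B_\delta(x_1)\subset K$, convexity yields $B_{2r}(y_\lambda)\subset K$, so $y_\lambda\in K^{-2r}$. The orthogonal projection $P$ onto $E_2$ satisfies $\|P(y_\lambda)-y_\lambda\|\le r$ by the localised Hausdorff bound, so $P(y_\lambda)\in K\cap E_2$; the triangle inequality then gives
\begin{displaymath}
\rho(y,K\cap E_2)\le(1-\lambda)\|y-x_1\|+r\le(1+4\|K\|/\delta)\,r.
\end{displaymath}
Swapping the roles of $E_1$ and $E_2$ produces the reverse bound, yielding the first inequality with $c_1=c_1(K,\delta)$.

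For the centroid bound, with $L_i=K\cap E_i$, the slice estimate shows that $L_1$ and $L_2$ are $(d-1)$-dimensional convex bodies in $\R^d$ with uniformly controlled inner and outer radii and with $\rho_H(L_1,L_2)\le c_1 r$. Projecting $L_1$ into $E_2$ by $P$, the symmetric difference $P(L_1)\Delta L_2$ inside $E_2$ lies in an $O(r)$-neighborhood of the relative boundary of $L_2$ and therefore has $(d-1)$-volume $O(r)$, while $V_{d-1}(L_2)\ge\kappa_{d-1}\delta^{d-1}$. Splitting the integral definition of the centroid along this symmetric difference yields $\|C(P(L_1))-C(L_2)\|=O(r)$. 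Because $P$ is an affine bijection of $E_1$ onto $E_2$ for small $r$, one has $P(C(L_1))=C(P(L_1))$ and $\|P(C(L_1))-C(L_1)\|\le r$, so combining the estimates gives the second inequality with $c_2=c_2(K,\delta)$.

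The hard part will be the proper interpretation and handling of $\rho_H(E_1,E_2)$ for unbounded hyperplanes: two non-parallel hyperplanes have infinite Hausdorff distance as subsets of $\R^d$, so every appearance of $\rho_H(E_1,E_2)$ must be read in a localised sense inside a fixed ball containing $K$, and the constants $c_1,c_2$ inevitably depend on $\|K\|$ through this localisation. A secondary technical point is verifying that the bound $\|P(y_\lambda)-y_\lambda\|\le r$ holds uniformly for all $y\in K\cap E_1$, not merely at a single reference point; this is automatic from the localised Hausdorff bound but deserves explicit attention when the angle between $E_1$ and $E_2$ is non-zero.
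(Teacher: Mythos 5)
Your argument is correct, and its second half is essentially the paper's: both lower-bound $V_{d-1}(K\cap E_i)\ge\kappa_{d-1}\delta^{d-1}$ via the inscribed ball $B_\delta(x_i)\subset K$, bound the $(d-1)$-volume of the symmetric difference between the (translated/projected) slices by a Steiner-type estimate of order $\rho_H(E_1,E_2)$, and split the centroid integrals over that symmetric difference. The first half is a genuine variant: the paper takes the support point $x$ of $K$ in the direction $u$ with $E_2=E_1+\eps u$ (whose distance to $E_1$ is at least $\delta$ because $E_1$ meets $K^{-\delta}$), forms the cone $M=\conv(\{x\}\cup(K\cap E_1))$ and uses $K\cap E_1\subset(M\cap E_2)^{2\delta^{-1}\|K\|\eps}\subset(K\cap E_2)^{2\delta^{-1}\|K\|\eps}$, whereas you retract each $y\in K\cap E_1$ towards $x_1\in K^{-\delta}\cap E_1$ so that a ball of radius $2r$ around the retracted point stays in $K$, and then project orthogonally onto $E_2$. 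Both exploit $K^{-\delta}\cap E_i\ne\emptyset$ in the same way and give constants of the same quality; your version avoids the support point and the explicit cone. One caution: the localisation of $\rho_H(E_1,E_2)$ that you flag as the hard part is unnecessary and, taken literally, alters the statement, since for parallel hyperplanes the localised distance dominates the true distance $\eps$, so bounding by it is (slightly) weaker than what is claimed. The paper's reading is simpler: for non-parallel hyperplanes $\rho_H(E_1,E_2)=\infty$ and the claim is vacuous, while for parallel hyperplanes every point of $E_1$ is at distance exactly $\eps$ from $E_2$, so your projection step $\|P(y_\lambda)-y_\lambda\|\le r$ holds verbatim with $r=\eps$ and your proof yields the stated inequality without any localisation (if you insist on the localised reading, you must add the easy reverse comparison between the localised and the true distance for parallel hyperplanes meeting $K^{-\delta}$).
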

\begin{proof}
  It suffices to consider only parallel $E_1$ and $E_2$ since
  otherwise the Hausdorff distance between them is infinite. Denote
  $\eps=\rho_H(E_1,E_2)$. 
  Let $u$ be the unit normal to $E_1$ such that
  $E_2=E_1+\eps u$. Furthermore, let $x$ be any support point of
  $K$ in direction $u$, that is, $\langle x,u\rangle$ equals the
  support function
  \begin{displaymath}
    h(K,u)=\sup\{\langle y,u\rangle: y\in K\},
  \end{displaymath}
  and let $M$ be the convex hull of $x$ and $K\cap E_1$. Since $E_1$
  intersects $K^{-\delta}$, the distance between $x$ and $E_1$ is at
  least $\delta$, so that $M$ is a cone of height at least
  $\delta$. Since $K\cap E_1=M\cap E_1$ is obtained by scaling $M\cap
  E_2$ with a factor at most $(1+\eps/\delta)$ and the displacement of any
  point from $K$ is at most $2\|K\|\eps/\delta$,
  \begin{displaymath}
    K\cap E_1\subset(M\cap E_2)^{2\delta^{-1}\|K\|\eps}\subset
   (K\cap E_2)^{2\delta^{-1}\|K\|\eps}.
  \end{displaymath}
  The same argument with the roles of $E_1$ and $E_2$ interchanged and
  with $x$ being the support point in direction $(-u)$ yields the
  result for the Hausdorff distance with $c_1=2\delta^{-1}\|K\|$.

  By assumption, we have that
  $V_{d-1}(K\cap E_i)\geq \delta^{d-1}\kappa_{d-1}$, $i=1,2$.  For all
  $r\leq 1$, 
  \begin{displaymath}
    V_{d-1}((K\cap E_i)^r)-V_{d-1}(K\cap E_i)\leq rm(K\cap E_i)
    \leq rm(K),
    \quad i=1,2,
  \end{displaymath}
  where we used the monotonicity of intrinsic volumes, see
  \cite[Eqn.~(5.25) and (5.31)]{schn2}.
  Without loss of generality, we may assume that $c_1\eps\le1$. Then,
  due to the first part of the lemma,
  \begin{displaymath}
    |V_{d-1}(K\cap E_1)-V_{d-1}(K\cap E_2)|
    \leq c_1\eps m(K).
  \end{displaymath}
  Denote $L=(K\cap E_2)-\eps u$, where the translation ensures that
  $L\subset E_1$. Thus,
  \begin{align*}
    \|C &(K\cap E_1)-C(K\cap E_2)\|\\
    &\leq \frac{1}{V_{d-1}(K\cap E_1)}
      \Big\|\int_{K\cap E_1} x\,\mathrm dx-\int_{L} (x+\eps u)\,\mathrm dx\Big\|\\
    &\qquad\qquad\qquad \qquad\qquad\qquad
    +\frac{|V_{d-1}(K\cap E_1)-V_{d-1}(K\cap E_2)|}
    {V_{d-1}(K\cap E_1) V_{d-1}(K\cap E_2)}
    \Big\|\int_{K\cap E_2}x\,\mathrm dx\Big\|\\
    &\leq \frac{1}{\delta^{d-1}\kappa_{d-1}}\Big(
      \int_{(K\cap E_1)\Delta L}\|x\|\,\mathrm dx\Big)
      +\frac{1}{\delta^{d-1}\kappa_{d-1}}\Big(
      \int_{L} \eps\,\mathrm dx
      +c_1\eps m(K)\|C(K\cap E_2)\|\Big)\\
    &\leq \frac{1}{\delta^{d-1}\kappa_{d-1}}\Big(
     2c_1\eps m(K)\|K\|+\eps m(K)(c_1\|K\|+1)\Big)=c_2\eps, 
  \end{align*}
  where $c_2$ depends only on $K$ and $\delta$.   
\end{proof}

The following result for $K$ with $C^1$-boundary can be recovered from
the proof of Lemma~5 in \cite{MR1270561}, however, without a uniform
bound on the Lipschitz constant. Recall that $u_y$ denotes a unit
normal vector to the convex floating body $K_\alpha$ at the point $y$
on its boundary. 

\begin{lemma}
  \label{lemma:2}
  Let $K$ be a convex body in $\R^d$, and let $\delta>0$ be such that
  $K^{-\delta}$ is not empty. Then
  \begin{displaymath}
    \mu(y+H_v)-\mu(y+H_{u_y})\leq c\|v-u_y\|^2, \quad y\in K^{-\delta},
  \end{displaymath}
  for a constant $c$ which depends only on $\delta$ and $K$. 
\end{lemma}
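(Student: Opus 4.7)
The plan is to reduce the claim to a one-parameter estimate and exploit the first-order optimality of $u_y$. After translating so that $y=0$ and rotating so that $u_y=e_d$, we have $B_\delta(0)\subset K$. Parametrise great-circle paths out of $u_y$ by $v(\theta)=\cos\theta\,e_d+\sin\theta\,w$ with $w\in\Sphere[d-1]\cap e_d^\perp$, and set $g(\theta):=\mu(H_{v(\theta)})$. Since $\|v(\theta)-u_y\|^2 = 2(1-\cos\theta)\asymp\theta^2$ for small $\theta$ and the bound is trivial when $\|v-u_y\|$ stays bounded away from $0$, it suffices to establish $g(\theta)-g(0)\leq c\theta^2$ for small $\theta$ with $c=c(K,\delta)$ independent of $w$.

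Next, I would compute $g'$ by differentiating under the integral sign in $g(\sigma)=V_d(K)^{-1}\int_K\one\{\langle z,v(\sigma)\rangle\leq 0\}\,\mathrm dz$ via the coarea formula, obtaining $g'(\sigma) = -V_d(K)^{-1}\int_{K\cap \partial H_{v(\sigma)}}\langle z,\dot v(\sigma)\rangle\,\mathrm dS(z)$. Letting $R_\sigma$ denote the rotation with $R_\sigma e_d=v(\sigma)$ that fixes $(\mathrm{span}\{e_d,w\})^\perp$, set $K_\sigma:=R_{-\sigma}K$, so that $K\cap \partial H_{v(\sigma)}=R_\sigma(K_\sigma\cap E_0)$ with $E_0=\{z_d=0\}$. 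Rigid-motion equivariance of the centroid, together with the identity $R_{-\sigma}\dot v(\sigma)=w$ (a short calculation), then yields
\[
g'(\sigma) \;=\; -\frac{V_{d-1}(K_\sigma\cap E_0)}{V_d(K)}\,\bigl\langle w,\, C(K_\sigma\cap E_0)\bigr\rangle.
\]
At $\sigma=0$ one has $K_0=K$, and since $y$ sits on the boundary of a floating body with outer normal $u_y$, the map $v\mapsto\mu(y+H_v)$ is minimised at $u_y$; varying $w$ in $\Sphere[d-1]\cap e_d^\perp$ then forces the classical centroid condition $C(K\cap E_0)=y=0$.

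The remaining and principal task is a Lipschitz control $\|C(K_\sigma\cap E_0)-C(K\cap E_0)\|\leq c'|\sigma|$, with $c'=c'(K,\delta)$ uniform in $w$, which is a \emph{moving-body / fixed-hyperplane} analogue of Lemma~\ref{lemma:1}. The two bodies $K$ and $K_\sigma$ both contain $B_\delta(0)$ and satisfy $\rho_H(K,K_\sigma)\leq\|K\|\,|\sigma|$; comparing their radial functions from the common origin, the containment of $B_\delta$ upgrades the Hausdorff bound to the pointwise bound $|\rho_K(u)-\rho_{K_\sigma}(u)|\leq C|\sigma|$ on $E_0\cap\Sphere[d-2]$. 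Polar-coordinate formulas in $E_0$ for the $(d-1)$-dimensional volume and first moment of $K\cap E_0$ and $K_\sigma\cap E_0$, combined with a mean-value bound as in the last display of the proof of Lemma~\ref{lemma:1}, then deliver the required bound on the centroids. Inserting this into the formula for $g'$ and using $C(K\cap E_0)=0$ gives $|g'(\sigma)|\leq c_1|\sigma|$ uniformly in $w$ and in $y\in K^{-\delta}$, and integration from $0$ to $\theta$ produces $g(\theta)-g(0)\leq (c_1/2)\theta^2$, as required. The main obstacle is making this last Lipschitz estimate uniform in the direction $w$ and in the base point $y$, and the radial-function route is the natural one because $y=0$ is an interior point common to both $K\cap E_0$ and $K_\sigma\cap E_0$.
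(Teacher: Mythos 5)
Your argument is correct, but it takes a genuinely different route from the paper. The paper's proof is a direct set comparison: it fixes the two halfspaces, introduces the sections $L_1\subset L\subset L_2$, the wedge $M$ and the set $Z$, shows that the main term cancels exactly because $V_d(Z\cap H_{u_y})=V_d(Z\cap H_v)$ --- using the fact that $y$ is the centroid of $K\cap(y+\partial H_{u_y})$, which is cited from the literature --- and bounds the remaining discrepancy by the volume difference of two cylinders of height $O(\|u_y-v\|)$ over bases with $\rho_H(L_1,L_2)=O(\|u_y-v\|)$, yielding the quadratic bound. You instead run a first-variation argument along a great circle: the transport formula gives $g'(\sigma)$ as a multiple of $\langle w, C(K_\sigma\cap E_0)\rangle$, the first-order optimality of $u_y$ kills $g'(0)$ (in effect re-deriving, rather than citing, the centroid property of the minimal section), a rotational analogue of Lemma~\ref{lemma:1} gives $|g'(\sigma)|\le c_1|\sigma|$ uniformly in $w$ and $y\in K^{-\delta}$, and integration gives the claim. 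What your route buys is the avoidance of the auxiliary sets $M$, $Z$, $L_1$, $L_2$ and the cylinder comparison, plus a self-contained proof of the centroid condition; what it costs is the justification of the differentiation step and of the centroid-Lipschitz estimate under rotation of the cutting hyperplane, which is comparable in effort to the paper's radial-function estimates (and indeed provable exactly as you indicate, via $r_{K_\sigma}(u)=r_{K-y}(R_\sigma u)$ and the Lipschitz property of the radial function of a body squeezed between $B_\delta(0)$ and a fixed ball). Two small points deserve attention: your optimality step uses that $\mu(y+H_{u_y})=D(y)$, i.e.\ that $u_y$ is a minimal direction --- this is the same assumption implicit in the paper's citation of the centroid property and it holds in the setting where the lemma is applied (Theorem~\ref{thr:uniform}), but it is worth stating; and the "differentiation under the integral sign" should be phrased as the standard transport/wedge-volume computation for the moving halfspace, which is valid here because the hyperplanes pass through the interior point $y$ and the section volumes and first moments depend continuously on the hyperplane.
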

\begin{proof}
  Applying a translation, assume that $y=0$ and denote $u=u_y$.  Let
  $L=K\cap \partial H_u$.  Furthermore, let $L_1$ be the set of
  $x\in L$ such that $\proj_{\partial H_v} x\in K$, and let $L_2$ be
  the set of all $x\in\partial H_u$ such that the segment which joins
  $x$ and $y=\proj_{\partial H_v} x$ intersects $K$. Note that
  $L_1\subset L\subset L_2$. 
    
  In the following we use the inequality
  \begin{displaymath}
    \|z-\proj_{\partial H_v} z\|
    =|\langle z,v\rangle|
    =|\langle z,v-u\rangle|\leq \|z\|\|v-u\|, \quad z\in \partial
    H_{u}.
  \end{displaymath}
  Denote by $r_M(w)=\sup\{t: tw\in M\}$ the radial function of a
  convex set $M$ in direction $w\in\Sphere$.

  Consider $w\in \Sphere \cap \partial H_u$ and $z=r_{L_2}(w)w$, which
  belongs to the relative boundary of $L_2$. Let $y$ belong to
  the intersection of $K$ and the segment which joins $z$ and
  $z_1=\proj_{\partial H_v} z$. Then $y$ belongs to the boundary of
  $K$ and the line passing through $z$ and $z_1$ is a tangent line to
  $K$, so that $y$ belongs to 
  $\partial H_{v'}$ with $v'$ being a convex combination of $u$ and
  $v$. Then $y=r_K(w_{v'})w_{v'}$.  Since
  $r_K(w)=r_L(w)$ and $\|y-z\|\leq \|z_1-z\|$,
  \begin{align*}
    |r_L(w)-r_{L_2}(w)|
    &\leq \|r_K(w)w-y\|+\|y-z\|\\
    &\leq \|r_K(w)w-r_K(w_{v'})w\|
      + \|r_K(w_{v'})w-r_K(w_{v'})w_{v'}\|
      + \|\proj_{\partial H_{v'}} z -z\|\\
    &\leq |r_K(w)-r_K(w_{v'})|
      +r_K(w_{v'}) \|w-w_{v'}\|
      + \|z\|\|u-v\|.
  \end{align*}
  The radial function $r_K$ is Lipschitz, that is,
  $|r_K(w_{v'})-r_K(w)|\leq c_3\|w_{v'}-w\|$, with a constant $c_3$
  depending only on $\delta$ and $K$. This is seen by writing
  $r_K(u)=h(K^o,u)^{-1}$ (see \cite[Lemma~1.7.13]{schn2}), using the
  support function of the polar body to $K$ and noticing that $K^o$ is
  contained in the ball of radius $\delta^{-1}$ and contains the ball
  of radius $\|K\|^{-1}$ and the fact that the support function is
  Lipschitz. Note also that $r_K(w)$ is bounded by $\|K\|$.
  Thus, and since $\|w_{v'}-w\|\leq \|v'-u\|\leq\|v-u\|$, we have
  \begin{displaymath}
    |r_L(w)-r_{L_2}(w)|\leq c_4 \|v-u\|
  \end{displaymath}
  for all $w\in\Sphere \cap \partial H_u$. Since the Hausdorff
  distance between convex sets is bounded by the uniform distance between
  their radial functions, $\rho_H(L,L_2)\leq c_4\|u-v\|$. 
  
  Now consider the set $L_1$, take $w\in \Sphere \cap \partial H_u$,
  and let $z_1=\proj_{\partial H_v} z$ for $z=r_L(w)w$. If $z_1\in K$,
  then $z$ lies on the boundary of $L_1$ and $r_{L_1}(w)=r_L(w)$. If
  $z_1\notin K$, then the point $z_2\in L$ with
  $r_K(w_v)w_v=\proj_{\partial H_v} z_2$ lies on the boundary of $L_1$, so
  that
  \begin{align*}
    r_L(w)-r_{L_1}(w)
    &=\|r_K(w)w-z_2\|\\
    &\leq \|r_K(w)w-r_K(w_v) w_v\|
      +\|r_K(w_v) w_v-z_2\|\\
    &\leq |r_K(w)-r_K(w_v)|+ r_K(w)\|w-w_v\|
      +\|\proj_{\partial H_v} z_2-z_2\|\leq c_5 \|u-v\|.
  \end{align*}
  Arguing as above, we obtain $\rho_H(L,L_1)\leq c_5\|v-u\|$.
  Combining this with the bound on $\rho_H(L,L_2)$ yields that
  \begin{equation}
    \label{eq:L12}
    L_2\subset L_1+c_6\|u-v\| B^{d-1},
  \end{equation}
  where $B^{d-1}$ is the unit ball in the space $\partial H_u$ of
  dimension $(d-1)$.
  
  Denote
  \begin{align*}
    M&=\{z\in \R^d\colon\langle z,u\rangle \langle z,v\rangle\leq 0\},\\
    h&=\sup\{\|z-\proj_{\partial H_v} z\|\colon z\in L_2\}.
  \end{align*}
  Note that $h\leq c_7\|u-v\|$. Furthermore, let
  \begin{math}
    Z=M\cap \conv(L\cup \proj_{\partial H_v}L).
  \end{math}
  Then 
  \begin{align*}
    \mu(H_v)-\mu(H_{u})
    \leq |V_d(Z\cap H_v)-V_d(Z \cap H_u)|
    +V_d(M\cap(K\triangle Z)).
  \end{align*}
  Note that $M\cap(K\triangle Z)$ is a subset of the difference of two
  cylinders, $C_2=L_2+[-hv,hv]$ and $C_1=L_1+[-hv,hv]$. By
  \eqref{eq:L12},
  \begin{align*}
    V_d(M\cap(K\triangle Z))\leq V_d(C_2)-V_d(C_1)
    &= 2h\big(V_{d-1}(L_2)-V_{d-1}(L_1)\big)\\
    &=2h\sum_{i=0}^{d-2} (c_6\|u-v\|)^{d-1-i} \kappa_{d-1-i} V_i(L_1)
    \leq c_8\|u-v\|^2.
  \end{align*}
  Finally,
  \begin{displaymath}
    V_d(Z\cap H_{u})-V_d(Z\cap H_v)
    =\langle u,v\rangle \int_{z\in L} \langle z,v\rangle \mathrm{d}z
    =\langle u,v\rangle \Big\langle \int_{z\in L} z \mathrm{d}z,v\Big\rangle =0,
  \end{displaymath}
  where $\langle u,v\rangle$ is the Jacobian of the linear
  transformation used in the integral, and the last integral vanishes
  since $0$ is the centroid of $L=K\cap H_u$, see, e.g., 
  \cite[Lemma~2]{MR1270561}.
\end{proof}

A convex body is said to be smooth if each point on its boundary
admits a unique outer normal; it is symmetric if $K$ equals its
reflection with respect to some point which is usually put at the
origin; $K$ is strictly convex if its boundary does not contain any
segment.  It is known that if $K$ is symmetric smooth and strictly
convex, then the convex floating body $K_\alpha$ is of the class $C^2$
for all $0<\alpha<V_d(K)/2=\alpha_{\max}$, see
\cite[Theorem~3]{mey:rei91}.  Then, each $x\neq 0$ from the interior
of $K$ belongs to some $\partial K_\alpha$ and there exists a unique
outer normal vector $u_x$ to $K_\alpha$ such that
$\mu(x+H_{u_x})=\alpha$, see Lemma \ref{lem:level}.

\begin{lemma}
  \label{lemma:3}
  Assume that $K$ is a symmetric smooth strictly convex body. Then the
  function $x\mapsto u_x$ is Lipschitz on
  $\{x\in K\colon \delta_1 \leq D(x)\leq \delta_2\}$, where
  $0<\delta_1<\delta_2< V_d(K)/2$, and the Lipschitz constant
  depends on $K$ and $\delta_1,\delta_2$.
\end{lemma}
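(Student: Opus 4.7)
My approach uses a characterization of floating-body boundary points for symmetric bodies: if $y\in\partial K_\alpha$ with outer normal $u_y$, then $y$ is the centroid of the section $L_y=K\cap(y+\partial H_{u_y})$. This is the first-order condition for the minimization $u\mapsto\mu(y+H_u)$ defining $u_y$---the tangential component of $\nabla_u\mu(y+H_u)$ at $u=u_y$ is proportional to the projection of $\int_{L_y}(z-y)\,\mathrm dz$ onto $\partial H_{u_y}$, so its vanishing gives $y=C(L_y)$.

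I then define the centroid map $G(y,u)=C(K\cap(y+\partial H_u))$ on the open set of pairs with $y+\partial H_u$ intersecting $K^{-\delta}$, for a suitable uniform $\delta>0$. Such $\delta$ exists because $\{y\in K:\delta_1\le D(y)\le\delta_2\}$ is compact and lies in the interior of $K$ (since $\delta_2<V_d(K)/2$ means $K_{\delta_2}$ is strictly inside $K$). By Lemma~\ref{lemma:1}, $G$ is then Lipschitz in both arguments with a constant depending only on $K$ and $\delta$, and the defining identity reads $G(y,u_y)=y$ throughout the region.

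For $y_1,y_2$ in the region with $u_i=u_{y_i}$, the decomposition
\begin{equation*}
  y_2-y_1=\bigl[G(y_2,u_2)-G(y_2,u_1)\bigr]+\bigl[G(y_2,u_1)-G(y_1,u_1)\bigr]
\end{equation*}
together with the Lipschitz bound on the last bracket from Lemma~\ref{lemma:1} applied to the parallel hyperplanes $y_i+\partial H_{u_1}$ yields $\|G(y_2,u_2)-G(y_2,u_1)\|\le(1+c_2)\|y_1-y_2\|$. The Lipschitz property of $y\mapsto u_y$ would then follow from a matching lower bound
\begin{equation*}
  \|G(y,u)-G(y,u_y)\|\ge c'\|u-u_y\|
\end{equation*}
valid for $y$ in the region and $u$ close to $u_y$, with $c'>0$ uniform, since then $\|u_1-u_2\|\le((1+c_2)/c')\|y_1-y_2\|$.

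The main obstacle is establishing this uniform lower bound. Infinitesimally, the map $u\mapsto G(y,u)$ at $u=u_y$ is the inverse Gauss map of $\partial K_\alpha$ at $y$, so its bi-Lipschitz behaviour is controlled by the reciprocals of the principal curvatures of $\partial K_\alpha$. By Theorem~3 in \cite{mey:rei91}, under our hypotheses $K_\alpha$ is of class $C^2$ for each $\alpha\in(0,V_d(K)/2)$, and strict convexity forces these principal curvatures to be strictly positive. To upgrade this to a uniform lower bound, I would invoke a compactness argument: the floating bodies $K_\alpha$ depend continuously on $\alpha$ in a $C^2$ sense on $[\delta_1,\delta_2]$ (through continuity of their support functions and second derivatives, themselves obtainable from the centroid characterization via the implicit function theorem), so the principal curvatures are continuous on the compact set $\{(y,\alpha):y\in\partial K_\alpha,\,\alpha\in[\delta_1,\delta_2]\}$ and attain a positive minimum, providing the required $c'$.
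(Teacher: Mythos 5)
Your reduction is tidy and the upper-bound half works: the centroid identity $y=C(K\cap(y+\partial H_{u_y}))$ is valid in the symmetric case, and the bracket $\|G(y_2,u_1)-G(y_1,u_1)\|\le c_2\|y_1-y_2\|$ does follow from Lemma~\ref{lemma:1} applied to the parallel hyperplanes $y_1+\partial H_{u_1}$ and $y_2+\partial H_{u_1}$ (note, though, that Lemma~\ref{lemma:1} says nothing about Lipschitz dependence on $u$, since non-parallel hyperplanes are at infinite Hausdorff distance; you do not use that claim, so it is harmless). The problem is that all of the difficulty has been pushed into the lower bound $\|G(y,u)-G(y,u_y)\|\ge c'\|u-u_y\|$ with $c'$ uniform over $\{\delta_1\le D(y)\le\delta_2\}$, and your final paragraph does not establish it. Two issues. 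First, the bound you need is a uniform \emph{upper} bound on the principal curvatures of $K_\alpha$ (equivalently, principal radii of curvature bounded below by $c'$): the differential of $u\mapsto G(y,u)$ at $u_y$ is, up to a quadratic error, the inverse Weingarten map, whose eigenvalues are the radii $1/\kappa_i$. The quantity you propose to extract by compactness --- a positive \emph{minimum} of the curvatures, ``forced by strict convexity'' --- controls the opposite inequality and cannot produce $c'$. Second, and more seriously, the uniformity in $\alpha\in[\delta_1,\delta_2]$ is asserted through ``$C^2$-continuity of $K_\alpha$ in $\alpha$, obtainable from the centroid characterization via the implicit function theorem.'' Theorem~3 of \cite{mey:rei91} gives $C^2$ regularity for each fixed $\alpha$ only; joint continuity of the second-order data in $(y,\alpha)$ is not an off-the-shelf fact, and the implicit-function-theorem route is circular, because the nondegeneracy one must verify to apply it (invertibility of the relevant differential with uniform bounds) is exactly the uniform curvature bound you are trying to obtain.

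This missing quantitative step is where the paper's proof does its real work: it invokes the explicit curvature formula of Leichtweiss and Meyer--Reisner, writing the matrix $Q$ (inverse second fundamental form of $K_t$ at $x$) as an integral of $r(\eta)^d\cot\beta(\eta)$ over the sphere in the tangent hyperplane, and then uses strict convexity through the comparison body $\tilde K$ and Theorem~1 of \cite{mey:rei91} to produce a uniform angle gap $\eps>0$, hence a lower bound on the quadratic form of $Q$ that is uniform over $t\in[\delta_1,\delta_2]$; points at different levels $t<s$ are then compared via support functions and Lemma~\ref{lemma:1}. Unless you supply a substitute for this curvature estimate (or prove the uniform $C^2$ dependence on $\alpha$ by some independent argument), your constant $c'$ is unjustified and the proof is incomplete.
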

\begin{proof}
  Assume that $D(x)=t$ and $D(y)=s$, where
  $\delta_1\leq t\leq s\leq \delta_2$ without loss of generality.

  We first settle the case when $t=s$, so that both $x$
  and $y$ belong to the boundary of $K_t$ for some
  $t\in[\delta_1,\delta_2]$. It is known from
  \cite[Eq.~(19)]{MR0919390} and \cite[Lemma~4]{MR1270561} that the
  radius of curvature $R(x)$ of $K_t$ at a point $x$ on its boundary equals
  the determinant of the matrix $Q$ given by
  \begin{displaymath}
    Q_{i,j}=\frac{1}{V_{d-1}(K\cap E)}
    \int_{\mathbb{S}^{d-2}}\eta_i\eta_j r(\eta)^d \cot\beta(\eta)
    \mathrm{d}\eta,\quad i,j=1,\dots,d-1,
  \end{displaymath}
  where $E$ is the $(d-1)$-dimensional affine space passing
  through $x$ with normal $u_x$, that is, the tangent plane to $K_t$
  at $x$, the integration is over
  the unit sphere in $E$, and $r(\eta)$ is the radial function of
  $K\cap E$. The angle $\beta(\eta)$ is built by the
  half-line $\ell(\eta)$ through $x$ in direction $\eta\in E$ and the
  tangent half-line to $\partial K$ at $y=\ell(\eta)\cap\partial K$,
  whose orthogonal projection onto $E$ is $\ell(\eta)$ and which
  belongs to the complement of $x+H_{u_x}$, see Figure~2 in
  \cite{mey:rei91}.

  The determinant of $Q$ is bounded below by the $(d-1)$th power of the
  infimum of
  \begin{displaymath}
    \frac{1}{V_{d-1}(K\cap E)}
    \int_{\mathbb{S}^{d-2}}\langle \eta,y\rangle^2
    r(\eta)^d \cot\beta(\eta) \mathrm{d}\eta
  \end{displaymath}
  for all $y$ from the unit sphere $\mathbb{S}^{d-2}$ in $E$. As in
  the proof of \cite[Theorem~3]{mey:rei91}, let $\tilde{K}$ be the
  convex hull of $K\cap E$ and $K\cap(-E)$, and let
  $\tilde\beta(\eta)$ be the maximum of the angles constructed for
  $\tilde K$ in the same way as $\beta(\eta)$ was constructed for
  $K$. Since $t\leq\delta_2<V_d(K)/2$ and $K$ is strictly convex, we
  have $\tilde\beta(\eta)\ge\eps+\beta(\eta)$ for some $\eps>0$, all
  $\eta\in\mathbb{S}^{d-2}$, and all $x$ with
  $D(x)\in[\delta_1,\delta_2]$. By Theorem~1 from \cite{mey:rei91} we
  have
  \begin{displaymath}
    0\leq \int_{\mathbb{S}^{d-2}}\langle \eta,y\rangle^2
    r(\eta)^d \cot\tilde\beta(\eta) \mathrm{d}\eta
    \leq \int_{\mathbb{S}^{d-2}}\langle \eta,y\rangle^2 
    r(\eta)^d \cot \beta(\eta) \mathrm{d}\eta
    -\eps\int_{\mathbb{S}^{d-2}}\langle \eta,y\rangle^2
    r(\eta)^d \mathrm{d}\eta,
  \end{displaymath}
  where we have used the inequality $\cot(\beta)-\cot(\beta+\eps)\geq 
  \eps$. Thus, 
  \begin{displaymath}
    R(x)\geq \Big(c_9\frac{\eps}{V_{d-1}(K\cap E)}\Big)^{d-1}. 
  \end{displaymath}
  The right-hand side is bounded away from zero for all $x$ with
  $D(x)\in[\delta_1,\delta_2]$. Thus, the curvatures of sets $K_t$
  with $t\in[\delta_1,\delta_2]$ are bounded by a constant, so that,
  $\|u_x-u_y\|\leq c_{10}\|x-y\|$ for a constant $c_{10}$ which
  depends on $K$ and $\delta_1,\delta_2$.

  Assume now that $t<s$.  Let $x\in\partial K_t$ with $u_x$ being the
  normal to $K_t$ at $x$. Let $z$ be the point on $K_s$ with the
  normal $u_x$. Since
  $\langle y,u_x\rangle\le\langle z,u_x\rangle=h(K_s,u_x)$, we have
  that
  \begin{math}
    h(K_t,u_x)-h(K_s,u_x)\leq \|x-y\|. 
  \end{math}
  Since $x$ and $z$ are the centroids of the sections
  $K\cap (x+\partial H_{u_x})$ and $K\cap (z+\partial H_{u_x})$,
  respectively, and $K_t$ is a subset of $K^{-\delta}$ for some
    $\delta>0$, 
  Lemma~\ref{lemma:1} yields that $\|x-z\|\leq c_2\|x-y\|$. Hence, $\|z-y\|\leq
  c_{11}\|x-y\|$, and, by the boundedness of curvatures of convex floating
  bodies, we have that $\|u_x-u_y\|=\|u_z-u_y\|\leq c_{12}\|x-y\|$. 
\end{proof}

\begin{theorem}
  \label{thr:uniform}
  Assume that $\mu$ is the uniform distribution on a centrally
  symmetric smooth strictly convex body $K$. Then the depth function
  is continuously differentiable on the interior of $K$ and 
  \begin{displaymath}
    \grad D(x)=\frac{V_{d-1}(K\cap(x+\partial H_{u_x}))}{V_d(K)} u_x. 
  \end{displaymath}
\end{theorem}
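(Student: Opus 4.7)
The plan is to compute all directional derivatives of $D$ directly at an arbitrary $x$ in the interior of $K$ with $D(x)<\alpha_{\max}$ (the deepest point is excluded, as the minimal halfspace fails to be unique there). Fix a unit vector $v$ and small $t$, write $u=u_x$ and $u_t=u_{x+tv}$, and note that Lemma~\ref{lemma:3} supplies $\|u_t-u\|\le c|t|$ with a constant uniform on a neighbourhood of $x$. The key identity is the decomposition
\begin{displaymath}
  D(x+tv)-D(x)
  = \bigl[\mu\bigl((x+tv)+H_{u_t}\bigr)-\mu\bigl(x+H_{u_t}\bigr)\bigr]
  + \bigl[\mu\bigl(x+H_{u_t}\bigr)-\mu\bigl(x+H_u\bigr)\bigr],
\end{displaymath}
splitting the finite difference into a translation part (parallel halfspaces) and a rotation part (same base point, different normals).

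The rotation part is nonnegative by minimality of $u$ and is $O(\|u_t-u\|^2)=O(t^2)$ by Lemma~\ref{lemma:2}. The translation part equals the signed $\mu$-measure of the slab between two parallel hyperplanes of common normal $u_t$ separated by signed distance $t\langle v,u_t\rangle$, which by the projected-measure representation is $\int_0^{t\langle v,u_t\rangle} f_{u_t}\bigl(\langle x,u_t\rangle+s\bigr)\,\mathrm ds$. Dividing through by $t$, letting $t\to 0$, and combining continuity of $(x,u)\mapsto T_{x,u}\mu=f_u(\langle x,u\rangle)$ with the continuity of $x\mapsto u_x$ from Lemma~\ref{lemma:3}, one obtains
\begin{displaymath}
  \lim_{t\to 0}\frac{D(x+tv)-D(x)}{t}=\langle v,u_x\rangle\,T_{x,u_x}\mu.
\end{displaymath}
Linearity in $v$ yields Gâteaux differentiability with $\grad D(x)=T_{x,u_x}\mu\cdot u_x$; specialising $\mu$ to be uniform on $K$ gives $T_{x,u_x}\mu=V_{d-1}\bigl(K\cap(x+\partial H_{u_x})\bigr)/V_d(K)$, which is the claimed expression (with direction supplied by $u_x$). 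Continuity of the gradient on the interior of $K$ away from the deepest point follows from continuity of both $u_x$ and $T$, and in finite dimension continuous Gâteaux differentiability upgrades to continuous Fréchet differentiability.

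The delicate step is the rotation part. A naive first-order bound on $\mu(x+H_{u_t})-\mu(x+H_u)$ would introduce a spurious $O(t)$ term and destroy the derivative; the $O(\|u_t-u\|^2)$ bound of Lemma~\ref{lemma:2} is therefore essential, and it uses in a serious way that $u$ minimises $w\mapsto\mu(x+H_w)$ and the minimum is quantitatively nondegenerate, which is where central symmetry, smoothness, and strict convexity of $K$ all enter. Pairing this quadratic stability with the linear Lipschitz bound on $x\mapsto u_x$ from Lemma~\ref{lemma:3} (itself a consequence of the $C^2$-regularity of the convex floating bodies for such $K$) is the delicate balance that allows the formula for $\grad D(x)$ to emerge.
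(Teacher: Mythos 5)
Your proof is correct and follows essentially the same route as the paper's: the same splitting of the increment of $D$ into a parallel-translation part (differentiated via the projected density, as in the proof of Lemma~\ref{lem:varpi}) and a rotation part controlled by the quadratic bound of Lemma~\ref{lemma:2} combined with the Lipschitz bound of Lemma~\ref{lemma:3}, merely with the roles of the two points in the decomposition interchanged. Your explicit exclusion of the deepest point is consistent with the paper's argument, since Lemma~\ref{lemma:3} is likewise only available on $\{\delta_1\le D\le\delta_2\}$ with $\delta_2<V_d(K)/2$.
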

\begin{proof}
  The proof follows the argument from the proof of
  Lemma~\ref{lem:varpi}. Namely, 
  \begin{displaymath}
    D(x)-D(y)=\mu(x+H_{u_x})-\mu(y+H_{u_y})
    =\Big(\mu(x+H_{u_x})-\mu(y+H_{u_x})\Big)+ \Big(\mu(y+H_{u_x})-\mu(y+H_{u_y})\Big).
  \end{displaymath}
  The first term is continuously differentiable, and the second one is
  of the order $\|x-y\|^2$ uniformly in $x$ due to
  Lemmas~\ref{lemma:2} and \ref{lemma:3}.
\end{proof}


\section*{Acknowledgement}
\label{sec:acknowledgement}

The authors are grateful to Lutz D\"umbgen for enlightening
discussions about empirical measures. IM and RT have been supported by
the Swiss National Science Foundation Grant No. 200021\_175584. 


\end{document}